\documentclass[11pt]{article}
\usepackage[latin1]{inputenc}
\usepackage{amsmath,amsthm,amssymb}
\usepackage{amsfonts}
\usepackage{amsmath,amsthm,amssymb,amscd}
\usepackage{latexsym}
\usepackage{color}
\usepackage{graphicx}
\usepackage{mathrsfs}
\usepackage{cite}

\usepackage{color,enumitem,graphicx}
\usepackage[colorlinks=true,urlcolor=black,
citecolor=black,linkcolor=black,linktocpage,pdfpagelabels,
bookmarksnumbered,bookmarksopen]{hyperref}

\makeatletter \@addtoreset{equation}{section} \makeatother

\newtheorem{theorem}{Theorem}[section]
\newtheorem{definition}{Definition}[section]
\newtheorem{proposition}{Proposition}[section]
\newtheorem{lemma}{Lemma}[section]
\newtheorem{remark}{Remark}[section]

\newtheorem{corollary}[theorem]{Corollary}

\allowdisplaybreaks

\begin{document}
\title{Remainder terms and sharp
quantitative stability for a nonlocal Sobolev inequality on the Heisenberg group}

\author{Wenjing Chen\footnote{Corresponding author.} \footnote{E-mail address:\, {\tt wjchen@swu.edu.cn} (W. Chen), {\tt zxwangmath@163.com} (Z. Wang).}\  \ and Zexi Wang\\
\footnotesize  School of Mathematics and Statistics, Southwest University,
Chongqing, 400715, P.R. China}


\date{ }
\maketitle

\begin{abstract}
{
In this paper, we study the following nonlocal Sobolev inequality on the Heisenberg group
\begin{equation}\label{eq:HLS}
	S_{HL}(Q,\mu)	\left(\int_{\mathbb{H}^{n}}\int_{\mathbb{H}^{n}}\frac{|u(\xi)|^{Q^{\ast}_{\mu}}|u(\eta)|^{Q^{\ast}_{\mu}}}{|\eta^{-1}\xi|^{\mu}}{d}\xi{d}\eta\right)^{\frac{1}{Q^{\ast}_{\mu}}}\leq \int_{\mathbb{H}^{n}}|\nabla_{\mathbb{H}}u|^{2}d\xi,\quad \forall \,  u\in S^{1,2}(\mathbb{H}^{n}),
\end{equation}
where $Q=2n+2$ is the homogeneous dimension of the Heisenberg group $\mathbb{H}^{n}$, $n\geq1$, $\mu\in(0,Q)$, $Q^{\ast}_{\mu}=\frac{2Q-\mu}{Q-2}$ is the upper critical exponent in the sense of  the Hardy-Littlewood-Sobolev inequality and the Folland-Stein-Sobolev inequality on the Heisenberg group, $S_{HL}(Q,\mu)$ is  the sharp constant of \eqref{eq:HLS}, and $S^{1,2}(\mathbb{H}^{n})$ is the Folland-Stein-Sobolev space. 
It is well-known that, up to a translation and suitable scaling,
\begin{equation}\label{eq:abs}
  -\Delta_{\mathbb{H}} u=\left(\int_{\mathbb{H}^{n}}\frac{|u(\eta)|
^{Q^{\ast}_{\mu}}}{|\eta^{-1}\xi|^{\mu}}{d}\eta\right)|u|^{Q_\mu^*-2}u,~~u\in S^{1,2}(\mathbb{H}^{n})
\end{equation}
is the Euler-Lagrange equation corresponding to the associated minimization problem.

On the one hand, we show the existence of a gradient-type
remainder term for inequality \eqref{eq:HLS} when $Q\geq4$, $\mu\in (0,4]$, and  as a corollary, derive the existence of a remainder term in the weak $L^{\frac{Q}{Q-2}}$-norm on bounded domains. On the other hand, we establish the quantitative stability of critical points for equation \eqref{eq:abs} in the multi-bubble case when $Q=4$ and $\mu\in (2,4)$.
 }

\smallskip
\emph{\bf Keywords:}  Remainder terms; Quantitative stability; Nonlocal Sobolev inequality; Hardy-Littlewood-Sobolev inequality; Heisenberg group.

\emph{\bf 2020 Mathematics Subject Classification:} 35B35; 35A23; 45E10.

\end{abstract}

\section{Introduction}
\subsection{Stability of Sobolev inequality in $\mathbb{R}^{N}$}
The classical Sobolev inequality
states that for $N\geq3$, there exists a dimensional constant $\mathcal{S}=\mathcal{S}(N)>0$ such that
\begin{equation}\label{Sobolev inequality}
	\|\nabla u\|_{L^{2}(\mathbb{R}^{N})}^{2}\geq \mathcal{S}\|u\|_{L^{2^{\ast}}(\mathbb{R}^{N})}^{2},\quad \forall \, u\in{D}^{1,2}(\mathbb{R}^{N}),
	\end{equation}
where $2^{\ast}=\frac{2N}{N-2}$ denotes the critical exponent for the Sobolev embedding $D^{1,2}(\mathbb{R}^{N})\hookrightarrow L^{p}(\mathbb{R}^{N})$, and $D^{1,2}(\mathbb{R}^N)$ is defined as the completion of $C_c^\infty(\mathbb{R}^{N})$ with
respect to the norm
\begin{equation*}
\|u\|_{D^{1,2}(\mathbb{R}^{N})}=\left(\int_{\mathbb{R}^{N}}|\nabla u|^{2}{d}x\right)^{\frac{1}{2}}.
\end{equation*}
In \cite{T76} (see also Aubin \cite{A76}), Talenti computed the sharp constant of \eqref{Sobolev inequality} and showed that its extremal functions are exactly the Talenti bubbles, which take the form
\begin{equation*}
U_{\lambda,a}(x)=[N(N-2)]^{\frac{N-2}{4}}\left(\frac{\lambda}{1+\lambda^{2}|x-a|^{2}}\right)
^{\frac{N-2}{2}},~~\lambda>0,~a\in\mathbb{R}^{N}.
\end{equation*}
Furthermore, Chen and Li \cite{CL91} established that $U_{\lambda,a}$ are the unique
positive solutions to the Euler-Lagrange equation corresponding to \eqref{Sobolev inequality}
\begin{equation}\label{limit1'}
  -\Delta u=|u|^{2^*-2}u,\qquad \mathrm{in}~\mathbb{R}^{N}.
\end{equation}

In \cite{BL85}, Brezis and Lieb posed a fundamental question: For any \( u \in D^{1,2}(\mathbb{R}^{N}) \), can one naturally bound the quantity \( \|\nabla u\|_{L^{2}(\mathbb{R}^{N})}^{2} - \mathcal{S}\|u\|_{L^{2^{\ast}}(\mathbb{R}^{N})}^{2} \) from below by the ``distance" between \( u \) and the manifold of bubble functions \( \mathcal{M} \)? Here $\mathcal{M}=\big\{cU_{\lambda,a}: c\in \mathbb{R},\lambda>0,a\in \mathbb{R}^N\big\}$ is an $(N+2)$-dimensional manifold.

This question
was first addressed by Bianchi and Egnell \cite{BE91}. To analyze the remainder term  in the Sobolev inequality \eqref{Sobolev inequality}, they first established the non-degeneracy of
 $U_{1,0}$, then employed the global-local analysis and spectral theory to prove that
 there exists  $\kappa>0$ such that
\begin{equation*}
  \|\nabla u\|_{L^{2}(\mathbb{R}^{N})}^{2}-\mathcal{S}\|u\|_{L^{2^{\ast}}(\mathbb{R}^{N})}^{2}\geq \kappa ~  \mathrm{dist}(u,\mathcal{M})^2,
  \quad \forall \, u\in{D}^{1,2}(\mathbb{R}^{N}),
\end{equation*}
where  $\mathrm{dist}(u,\mathcal{M})=\inf_{c\in \mathbb{R},\lambda>0,a\in \mathbb{R}^N}\|u-cU_{\lambda,a}\|_{D^{1,2}(\mathbb{R}^{N})}$.
Subsequent works extended this result to the second Sobolev inequality \cite{LW2000},
the fractional Sobolev setting (for all \( N \geq 1 \) and \( s \in (0,\frac{N}{2}) \); \cite{CFW13}), and the \( p \)-Laplacian case \cite{FN19,FZ22,N20}.
For more related results, including explicit lower or upper bounds for \( \kappa \), we refer the readers to \cite{DEFF25,K23,K24,K25,CLT25,CLT24,CLT24'}.


A natural and more challenging direction is to investigate the quantitative stability of critical points for
the Euler-Lagrange equation \eqref{limit1'}: Specifically, if a function almost solves \eqref{limit1'}, must it be quantitatively close to a single bubble \( U_{\lambda,a} \) or a sum of weakly interacting bubbles? Struwe's celebrated global compactness lemma \cite{S84} addressed this question qualitatively. Building on \cite{S84}, extensive research has been dedicated to the stability of critical points for \eqref{limit1'}. A significant breakthrough was achieved by Ciraolo, Figalli, and Maggi \cite{CFM18}, who established a sharp quantitative stability result for the single-bubble case in dimensions \( N \geq 3 \). Precisely, they showed that
\begin{equation*}
  \|u-U_{\lambda,a}\|_{{D}^{1,2}(\mathbb{R}^{N})} \leq  C
 \Theta(u).
\end{equation*}
where $\Theta(u)=\|\Delta u+|u|^{2^*-2}u\|_{({D}^{1,2}(\mathbb{R}^{N}))^{-1}}$.
Subsequently, Figalli and Glaudo \cite{FG20} and Deng, Sun and Wei \cite{DSW25} extended this
stability result to the multi-bubble (weakly interacting) case for $3\leq N \leq5$ and $N\geq6$, respectively. Their results can be summarized as follows
\begin{equation*}
\left\|u-\sum_{i=1}^{m}{U}_{\lambda_i, a_i} \right\|_{{D}^{1,2}(\mathbb{R}^{N})} \leq C
\begin{cases}
 \Theta(u), \qquad & \mathrm{if} ~ 3\leq N\leq 5, \\
  \Theta(u)|\log \Theta(u) |^{\frac{1}{2}}, \qquad & \mathrm{if} ~ N=6, \\
  \Theta(u)^{\frac{N+2}{2(N-2)}}, \qquad & \mathrm{if} ~ N\geq 7.
\end{cases}
\end{equation*}
  Furthermore, the quantitative stability results in \cite{CFM18,DSW25,FG20}
have been generalized to the fractional setting \cite{CKW25} and the $p$-Laplacian case  (single-bubble case; \cite{CG25,LZ25}).
We now turn to the nonlocal inequalities associated with the Hardy-Littlewood-Sobolev (HLS) inequality, which we recall below.
\begin{proposition} \cite[Theorem 4.3]{LL01}
Suppose that  $N\geq 1$, $0<\mu< N$ and $t,r>1$ with $\frac{1}{t}+\frac{\mu}{N}+\frac{1}{r}=2$, $f\in L^{t}(\mathbb{R}^{N})$ and $h\in L^{r}(\mathbb{R}^{N})$. Then there exists a constant ${C}(N,\mu,t,r)>0$ independent of $f$ and $h$ such that
\begin{equation}\label{HLS}
  \displaystyle\int_{\mathbb{R}^N}\int_{\mathbb{R}^N}\frac{f(x)h(y)}{|x-y|^\mu}dxdy\leq C(N,\mu,t,r)\|f\|_{L^t(\mathbb{R}^N)}\|h\|_{L^r(\mathbb{R}^N)}.
\end{equation}
If $t=r=\frac{2N}{2N-\mu}$, then
\begin{equation*}
  {C}(N,\mu,t,r)=C(N,\mu)=\frac{\pi^{\frac{\mu}{2}}\Gamma\big(\frac{N-\mu}{2}\big)}{\Gamma\big(
  \frac{2N-\mu}{2}\big)}\left(\frac{\Gamma(N)}{\Gamma(\frac{N}{2})}\right)^{\frac{N-\mu}{N}},
\end{equation*}
and there is equality in \eqref{HLS} if and only if $f\equiv (const.)h$ and
\begin{equation*}
  h(x)=c\left(\frac{\lambda}{1+\lambda^{2}|x-a|^{2}}\right)
^{\frac{2N-\mu}{2}},~~\lambda>0,~a\in\mathbb{R}^{N}
\end{equation*}
for some $c\in \mathbb{R}$, 
where
$\Gamma$ denotes the Gamma function $\Gamma(\gamma)=\int_0^{+\infty}t^{\gamma-1}e^{-t}dt$ for $\gamma>0$.
\end{proposition}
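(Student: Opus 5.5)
The plan has two parts. The general inequality \eqref{HLS} I would prove by a layer-cake decomposition and a weak-type interpolation argument. The sharp diagonal case $t=r=\frac{2N}{2N-\mu}$ I would prove by Lieb's rearrangement and conformal-invariance argument.

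For the general inequality, I first reduce to $f,h\ge 0$. By homogeneity I may assume $\|f\|_{L^t}=\|h\|_{L^r}=1$, and by Riesz's rearrangement inequality I may replace $f,h$ by their symmetric decreasing rearrangements. I then write $f=\int_0^\infty \chi_{\{f>a\}}\,da$ and similarly $h=\int_0^\infty \chi_{\{h>b\}}\,db$, and split $|x|^{-\mu}=\int_0^\infty \chi_{\{|x|^{-\mu}>c\}}\,dc$. The triple integral becomes
\begin{equation*}
\int\!\!\int\!\!\int I(a,b,c)\,da\,db\,dc,
\end{equation*}
where $I(a,b,c)$ is the integral of a product of three characteristic functions. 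Each $I$ is bounded by the minimum of the three pairwise products of the measures $u(a)=|\{f>a\}|$, $v(b)=|\{h>b\}|$, $w(c)=|\{|x|^{-\mu}>c\}|\sim c^{-N/\mu}$. Splitting the $c$-integration at the threshold where $w(c)$ balances $u(a)$ or $v(b)$ and integrating explicitly, I obtain a bound by $\int\!\!\int \min(\cdots)\,da\,db$. Using the scaling relation $\frac1t+\frac{\mu}{N}+\frac1r=2$ together with $\|f\|_t^t=t\int a^{t-1}u(a)\,da$, this reduces to $C(\|f\|_t^t+\|h\|_r^r)$. This is the standard proof in \cite{LL01}, and the bookkeeping of exponents is routine.

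For the sharp constant in the diagonal case, I reduce to $f=h$ by positivity of the kernel: the quadratic form has positive-definite Fourier multiplier $c|\xi|^{\mu-N}$, so Cauchy--Schwarz gives $\int\!\!\int f h\,|x-y|^{-\mu}\le D(f,f)^{1/2}D(h,h)^{1/2}$. Equality in Cauchy--Schwarz forces $f\equiv(\mathrm{const.})h$. Existence of a maximizer for $D(f,f)/\|f\|_t^2$ among symmetric decreasing functions then follows by compactness modulo dilations, using Helly selection for monotone radial functions together with a Brezis--Lieb type argument.

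Next comes the conformal invariance. Using the inversion $x\mapsto x/|x|^2$, or equivalently stereographic projection to $\mathbb{S}^N$, the functional is invariant under $f\mapsto |x|^{-(2N-\mu)}f(x/|x|^2)$. A maximizer therefore stays a maximizer after any conformal map and subsequent rearrangement. Lieb's argument, or the competing-symmetries method of Carlen--Loss, then shows that the maximizer must be invariant under both operations. This forces it to be the function $c(1+|x|^2)^{-(2N-\mu)/2}$, up to translation and dilation, which gives the stated extremals. Finally, I compute $C(N,\mu)$ by evaluating the functional on $h_0=(1+|x|^2)^{-(2N-\mu)/2}$. On the sphere the Riesz potential of the constant function is constant and is expressible through Beta integrals, which produces the Gamma-function formula.

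The main obstacle is the classification of all maximizers: one must show that \emph{every} maximizer, not just one, is a bubble. I would handle this by the strict form of Riesz's rearrangement inequality (strict decrease of $|x|^{-\mu}$), which forces any maximizer to be a translate of a symmetric decreasing function. The same applies to its conformal transforms, and a function that is symmetric decreasing about some point after every inversion must be of the stated form. Since the proposition is quoted from \cite[Theorem 4.3]{LL01}, in the paper it suffices to cite that result.
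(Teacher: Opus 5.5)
Your outline is correct and is essentially the Lieb--Loss proof: the layer-cake bound for the general exponents, then for the diagonal case Cauchy--Schwarz in the positive-definite form to reduce to $f=h$, existence of a maximizer via rearrangement and compactness, and conformal invariance plus the strict Riesz inequality for the sharp constant and the classification of maximizers. The paper itself gives no proof and simply quotes \cite[Theorem 4.3]{LL01}, so your final remark that citing suffices matches what the paper does; the only loose phrase is that the maximizer is ``invariant under both operations,'' since what the classification actually uses is what you state at the end, namely that every conformal image of a maximizer is again a translate of a symmetric decreasing function.
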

According to \eqref{HLS}, the functional
\begin{equation*}
  \int_{\mathbb{R}^{N}}\int_{\mathbb{R}^{N}} \frac{|u(x)|^p|u(y)|^p}{|x-y|^\mu}dxdy
\end{equation*}
is well defined in $D^{1,2}(\mathbb{R}^{N})$ if $p\in\big[\frac{2N-\mu}{N},\frac{2N-\mu}{N-2}\big]$. Then $2^{\ast}_{\mu}=\frac{2N-\mu}{N-2}$ and  $2_{\ast}^{\mu}=\frac{2N-\mu}{N}$ are called the upper and lower critical exponents, respectively.
Moreover, combining the Sobolev inequality with the HLS
inequality, one can easily derive the following nonlocal inequality
\begin{equation}\label{NonS}
	\mathcal{S}_{HL}(N,\mu)	\left(
\int_{\mathbb{R}^{N}}\int_{\mathbb{R}^{N}}
\frac{|u(x)|^{2_\mu^*}|u(y)|^{2_\mu^*}}{|x-y|^\mu}dxdy
\right)^{\frac{1}{2^{\ast}_{\mu}}}\leq \int_{\mathbb{R}^{N}}|\nabla u|^{2}d x,\quad \forall \,  u\in D^{1,2}(\mathbb{R}^{N}),
\end{equation}
where $\mathcal{S}_{HL}(N,\mu)=\frac{\mathcal{S}}{C(N,\mu)^{\frac{1}{2_\mu^*}}}$.
And the associated Euler-Lagrange equation becomes
\begin{equation}\label{limit2'}
  -\Delta u=\left(\int_{\mathbb{R}^{N}}\frac{|u(y)|
^{2^{\ast}_{\mu}}}{|x-y|^{\mu}}dy \right)|u|^{2_\mu^*-2}u,~~u\in D^{1,2}(\mathbb{R}^{N}).
\end{equation}
To gain a deeper understanding of inequality \eqref{NonS}, it is natural to investigate the quantitative stability of both \eqref{NonS} and its Euler-Lagrange equation \eqref{limit2'}.
Inspired by \cite{BE91}, using the classification \cite{DY19,GHPS19} and non-degeneracy \cite{LLTX23} of positive solutions
 to \eqref{limit2'},
 Deng et al. \cite{DTYZ23} established a gradient-type remainder term estimate for \eqref{NonS}: Assume that $N\geq3$, $0<\mu<N$ with $\mu\leq 4$, then there exist constants $\kappa_2>\kappa_1>0$ such that for all $u\in{D}^{1,2}(\mathbb{R}^{N})$, it holds that
\begin{equation*}
\kappa_2 ~  \mathrm{dist}(u,\mathcal{M})^2\geq  \int_{\mathbb{R}^{N}}|\nabla u|^{2}{d}x-\mathcal{S}_{HL}(N,\mu)\left(
\int_{\mathbb{R}^{N}}\int_{\mathbb{R}^{N}}
\frac{|u(x)|^{2_\mu^*}|u(y)|^{2_\mu^*}}{|x-y|^\mu}dxdy
\right)^{\frac{1}{2^{\ast}_{\mu}}}\geq \kappa_1 ~  \mathrm{dist}(u,\mathcal{M})^2.
\end{equation*}
  On the other hand, Liu, Zhang and Zou \cite{LZZ23}, Piccione, Yang and Zhao \cite{PYZ25}, Yang and Zhao \cite{YZ252}, and Dai, Hu and Peng \cite{DHP25} have established the quantitative stability results for critical points
 of equation \eqref{limit2'}. For extensions to the stability of fractional HLS inequalities, we refer the readers to \cite{LYZ25}.

\subsection{Stability of Folland-Stein-Sobolev inequality on $\mathbb{H}^{n}$}
The Heisenberg group $\mathbb{H}^{n}$ is $\mathbb{C}^{n}\times\mathbb{R}$ with elements $\xi=(\xi_{l})=(z,t)$, $\xi'=(\xi'_{l})=(z',t')$, $1\leq l\leq 2n+1$, and group operation
\begin{equation*}
	\xi'\xi=\big(z+z',t+t'+2\mathrm{Im}z'\cdot\bar{z}\big).
\end{equation*}
The left translations are given by
\begin{equation*}
\tau_{\xi'}(\xi)=\xi'\xi,
\end{equation*}
and the dilations of group are
$\{\delta_{\lambda}\}_{\lambda>0}: \mathbb{H}^{n}\rightarrow\mathbb{H}^{n}$,
\begin{equation*}
\delta_{\lambda}(\xi)=(\lambda z,\lambda^{2}t).
\end{equation*}
Define the homogeneous norm
\begin{equation*}
|\xi|=(|z|^{4}+t^{2})^{\frac{1}{4}},
\end{equation*}
 and the distance
$d(\xi,\xi')=|(\xi')^{-1}\xi|$.
It holds that $|\delta_{\lambda}(\xi)|=\lambda|\xi|$ and
$d(\lambda\xi,\lambda\xi')=\lambda d(\xi,\xi')$.
As usual, the homogeneous dimension of $\mathbb{H}^{n}$ is $Q=2n+2$. Denote by $B_r(\xi_0)$ the ball of radius $r$ centered at $\xi_0$ with respect to the Heisenberg distance $d$, and write $B_r=B_r(\mathbf{0})$ when the center is the origin.

The canonical left-invariant vector fields on $\mathbb{H}^{n}$ are
\begin{equation*}
	X_{j}=\frac{\partial }{\partial x_{j}}+2y_{j}\frac{\partial }{\partial t},\quad X_{n+j}=\frac{\partial}{\partial y_{j}}-2x_{j}\frac{\partial}{\partial t},\quad j=1, \ldots, n.
\end{equation*}
It follows that the canonical left-invariant vector field is
\begin{equation*}
	\nabla_{\mathbb{H}}=(X_{1}, \ldots, X_{n},X_{n+1}, \ldots, X_{2n}),
\end{equation*}
and the Kohn Laplacian (or sub-Laplacian) operator is 
\begin{equation*}
	\Delta_{\mathbb{H}}=\mathop{\sum}\limits_{j=1}^{n}\big(X_{j}^{2}+X_{n+j}^{2}\big).
\end{equation*}
Let $Q^{\ast}=\frac{2Q}{Q-2}$, define the standard Folland-Stein-Sobolev space
\begin{equation*}
	S^{1,2}(\mathbb{H}^{n})=\Big\{u\in L^{Q^{\ast}}(\mathbb{H}^{n}):\nabla_{\mathbb{H}}u\in L^{2}(\mathbb{H}^{n})\Big\},
\end{equation*}
with the inner product
\begin{equation*}
\langle u,v\rangle_{S^{1,2}(\mathbb{H}^{n})}=\int_{\mathbb{H}^{n}}\nabla_{\mathbb{H}}u\cdot\nabla_{\mathbb{H}}v{d}\xi,
\end{equation*}
and the corresponding norm
\begin{equation*} \|u\|_{S^{1,2}(\mathbb{H}^{n})}=\left(\int_{\mathbb{H}^{n}}|\nabla_{\mathbb{H}}u|^{2}{d}\xi\right)^{\frac{1}{2}}.
\end{equation*}

In \cite{FS74}, Folland and Stein established the following well-known Sobolev type inequality on $\mathbb{H}^{n}$
\begin{equation}\label{folland-stein}
	\int_{\mathbb{H}^{n}}|\nabla_{\mathbb{H}}u|^{2}{d}\xi\geq S(Q)\left(\int_{\mathbb{H}^{n}}|u|^{Q^{\ast}}{d}\xi\right)^{\frac{2}{Q^{\ast}}},\quad \forall \, u\in S^{1,2}(\mathbb{H}^{n}).
\end{equation}
Moreover, it follows from Jerison and Lee \cite{JL88} 
that
\begin{equation*}
  S(Q)=\frac{4\pi n^2}{(n!4^n)^{\frac{1}{n+1}}},
\end{equation*}
and the corresponding extremal functions are in the form (Jerison-Lee bubbles):
\begin{equation}\label{gU}
  \mathfrak{g}_{\lambda,\xi_0}U(\xi)=\lambda^{\frac{Q-2}{2}}U(\delta_\lambda(\tau_{\xi_0}^{-1}(\xi))),
  ~~\lambda>0,~\zeta_{0}\in\mathbb{H}^{n},
\end{equation}
where
	\begin{equation*}
		U(\xi)=\frac{(2n)^n}{[(1+|z|^{2})^{2}+t^{2}]^{\frac{Q-2}{4}}}
	\end{equation*}
is the unique (up to translations and scalings, i.e., $\mathfrak{g}_{\lambda,\xi}U$ for all $\lambda>0$ and $\xi\in\mathbb{H}^{n}$)
positive solution of the Euler-Lagrange equation
\begin{equation}\label{limit1}
  -\Delta_{\mathbb{H}} u=|u|^{Q^*-2}u,~~\xi\in\mathbb{H}^{n}.
\end{equation}
 Malchiodi and Uguzzoni \cite{MU02} proved the non-degeneracy of the positive bubble solutions to  \eqref{limit1}.
 For further applications, Loiudice \cite{L05} derived the remainder term estimate for inequality
\eqref{folland-stein}, while Tang, Zhang and Zhang \cite{TZZ24} and Chen et al. \cite{CLTW25} established a positive upper and lower bound for the value of this remainder term estimate, respectively. For the fractional version, the readers may refer to
\cite{LZ15} for further details. More recently, Chen, Fan and Liao \cite{CFL25} obtained the quantitative stability result of critical points for equation \eqref{limit1}.

Similar to the HLS inequality in $\mathbb{R}^{N}$, the HLS inequality on the Heisenberg group
$\mathbb{H}^{n}$
has also been established by Folland and Stein \cite{FS74} as well as Frank and Lieb \cite{FL12}.
\begin{proposition}
	Suppose that  $Q\geq 4$, $0<\mu< Q$ and $t,r>1$ with $\frac{1}{t}+\frac{\mu}{Q}+\frac{1}{r}=2$, $f\in L^{t}(\mathbb{H}^{n})$ and $h\in L^{r}(\mathbb{H}^{n})$. Then there exists a constant $\widetilde{C}(Q,\mu,t,r)>0$ independent of $f$ and $h$ such that
	\begin{equation}\label{eq:HLSH} \int_{\mathbb{H}^{n}}\int_{\mathbb{H}^{n}}\frac{f(\xi)h(\eta)}{|\eta^{-1}\xi|^{\mu}}
d\xi{d}\eta\leq \widetilde{C}(Q,\mu,t,r)\|f\|_{L^t(\mathbb{H}^{n})}\|h\|_{L^r(\mathbb{H}^{n})}, ~~\xi,\eta\in\mathbb{H}^{n}.
	\end{equation}	
	If $t=r=\frac{2Q}{2Q-\mu}$, then
\begin{equation}\label{equH}
  \widetilde{C}(Q,\mu,t,r)=C(Q,\mu)=\bigg(\frac{\pi^{n+1}}{2^{n-1}n!}\bigg)^{\frac{\mu}{Q}}
  \frac{n!\Gamma\big(\frac{Q-\mu}{2}\big)}{\Gamma^2\big(\frac{2Q-\mu}{4}\big)},
\end{equation}
and there is equality in \eqref{eq:HLSH} if and only if $f\equiv (const.)h$ and
\begin{equation*}
  h(\xi)=c\lambda^{\frac{2Q-\mu}{2}}V(\delta_\lambda(\tau_{\xi_0}^{-1}(\xi))),
  ~~\lambda>0,~\zeta_{0}\in\mathbb{H}^{n}
\end{equation*}
for some $c\in \mathbb{C}$, 
where
\begin{equation*}
		V(\xi)=\frac{1}{[(1+|z|^{2})^{2}+t^{2}]^{\frac{2Q-\mu}{4}}}.
	\end{equation*}
\end{proposition}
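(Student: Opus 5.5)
The plan has two parts: first the non-sharp inequality \eqref{eq:HLSH} for general exponents, then the sharp case $t=r=\frac{2Q}{2Q-\mu}$ via the CR-conformal structure of $\mathbb{H}^{n}$.

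\emph{Non-sharp inequality.} I would treat the left side of \eqref{eq:HLSH} as $\int_{\mathbb{H}^n} h\,(f*K)\,d\eta$, where $*$ is group convolution and $K(\xi)=|\xi|^{-\mu}$.
\begin{itemize}
\item Haar measure is Lebesgue measure and $|B_r|=|B_1|r^{Q}$, so $|\{|K|>s\}|=|B_1|s^{-Q/\mu}$. Hence $K\in L^{Q/\mu,\infty}(\mathbb{H}^n)$.
\item Split $K=K\chi_{B_R}+K\chi_{B_R^c}$ and apply Young's inequality on the unimodular group $\mathbb{H}^n$ to each piece. Optimizing over $R$ gives weak-type bounds for $f\mapsto f*K$.
\item Marcinkiewicz interpolation then gives the strong bound $\|f*K\|_{L^{r'}}\le C\|f\|_{L^t}$ when $\frac1t+\frac{\mu}{Q}=1+\frac1{r'}$, which is exactly the stated relation. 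Hölder's inequality in $h$ finishes this part.
\end{itemize}

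\emph{Sharp case: reduction to the sphere.} For $t=r=\frac{2Q}{2Q-\mu}$ I would follow Frank--Lieb and transfer the problem to the CR sphere $\mathbb{S}^{2n+1}\subset\mathbb{C}^{n+1}$ via the Cayley transform $\mathcal{C}$. Its Jacobian is a power of $(1+|z|^2)^2+t^2$. One has $|\eta^{-1}\xi|^{4}$ proportional to $|1-\zeta\cdot\bar\omega|^{2}$ times Jacobian factors, where $\zeta=\mathcal{C}(\xi)$ and $\omega=\mathcal{C}(\eta)$. The functional is therefore invariant under the pullback $f\mapsto J^{(2Q-\mu)/(2Q)}f\circ\mathcal{C}^{-1}$. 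The kernel on the sphere is $|1-\zeta\cdot\bar\omega|^{-\mu/2}$, and the constant function $1$ pulls back to $V$.

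\emph{Sharp case: existence and classification of optimizers.}
\begin{itemize}
\item I would prove that optimizers exist by concentration-compactness modulo the noncompact symmetries (translations $\tau_{\xi_0}$ and dilations $\delta_\lambda$). Strict superadditivity of the optimal quotient rules out dichotomy.
\item Using the symmetry group $SU(n+1,1)$ of the sphere, I would normalize an optimizer $h$ so that $\int_{\mathbb{S}} \zeta\,|h|^{t}\,d\zeta=0$ (vanishing center of mass). This follows from a topological degree argument on the ball of conformal parameters.
\item At this normalized optimizer, test the second variation in the directions $h\,\zeta_j$ and $h\,\bar\zeta_j$ and sum over $j$. This uses $\sum_j|\zeta_j|^2=1$.
\item The kernel $|1-\zeta\cdot\bar\omega|^{-\mu/2}$ acts diagonally on the bigraded spherical harmonics $\mathcal{H}_{j,k}$, with explicit eigenvalues $\lambda_{j,k}$ given by Gamma-function ratios (Funk--Hecke formula on the CR sphere). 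Comparing $\lambda_{1,0}$, $\lambda_{0,1}$ with $\lambda_{0,0}$ in the summed second variation forces $|h|$ to be constant, and the Euler--Lagrange equation then forces $h$ itself to be a constant multiple of $1$.
\item Undoing the symmetries gives exactly the family $c\lambda^{\frac{2Q-\mu}{2}}V(\delta_\lambda(\tau_{\xi_0}^{-1}\xi))$. Equality in Hölder's inequality forces $f\equiv(\mathrm{const.})h$.
\end{itemize}

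\emph{Sharp case: the constant.} Evaluating the functional at $f=h=V$, equivalently at $1$ on the sphere, should give the constant $C(Q,\mu)$ in \eqref{equH}. This uses $\lambda_{0,0}$ and the volume $|\mathbb{S}^{2n+1}|=\frac{2\pi^{n+1}}{n!}$.

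\emph{Main obstacle.} The hardest step is the second-variation classification. Symmetric-decreasing rearrangement is not available for the Heisenberg norm, so the argument rests entirely on the precise eigenvalue inequalities for $\lambda_{j,k}$ and on the degree argument that produces the vanishing center of mass. I would first verify the eigenvalue inequality by writing $\lambda_{j,k}$ as an explicit Gamma ratio and checking monotonicity in $j+k$ for $0<\mu<Q$.
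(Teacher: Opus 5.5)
The paper gives no proof of this proposition. It quotes the non-sharp inequality from Folland--Stein and the sharp case from Frank--Lieb. Your outline is essentially the Frank--Lieb route: Cayley transform, existence modulo translations and dilations, center-of-mass normalization by a degree argument, second variation in the directions $\zeta_j h$, and diagonalization on the bigraded harmonics $\mathcal{H}_{j,k}$. The non-sharp part via weak-type Young and Marcinkiewicz is standard.

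\textbf{The gap is in the step you yourself call decisive.} Comparing $\lambda_{1,0},\lambda_{0,1}$ with $\lambda_{0,0}$ only yields the identity $\lambda_{1,0}=\lambda_{0,1}=(p-1)\lambda_{0,0}$, where $p=\frac{2Q}{2Q-\mu}$. This is an equality. It only says that constants pass the second-variation test, with equality in the conformal directions. It gives no information about the components of the optimizer in $\mathcal{H}_{j,k}$ for $(j,k)\neq(0,0)$, so it cannot force the optimizer to be constant. Nor is monotonicity of $\lambda_{j,k}$ the property to check. The decrease $\lambda_{j+1,k}<\lambda_{j,k}$ points the wrong way: what is needed is a \emph{lower} bound on $\lambda_{j+1,k}$ relative to $\lambda_{j,k}$.

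\textbf{What is actually needed is a strict inequality on every bidegree.} Let $K$ denote the operator with kernel $|1-\zeta\cdot\bar\omega|^{-\mu/2}$. The summed second variation reads $\sum_i\langle \zeta_i h,K\zeta_i h\rangle\le (p-1)\langle h,Kh\rangle$.
\begin{itemize}
\item The operator $\sum_i \bar\zeta_i K\zeta_i$ commutes with $U(n+1)$. Hence it is diagonal on the $\mathcal{H}_{j,k}$, with no cross terms.
\item For $Y\in\mathcal{H}_{j,k}$ one has $\zeta_iY\in\mathcal{H}_{j+1,k}\oplus\mathcal{H}_{j,k-1}$. With $P_{j,k-1}$ the orthogonal projection onto $\mathcal{H}_{j,k-1}$, $\sum_i\|P_{j,k-1}(\zeta_iY)\|_2^2=\frac{k}{n+j+k}\|Y\|_2^2$.
\item So the eigenvalue of $\sum_i \bar\zeta_i K\zeta_i$ on $\mathcal{H}_{j,k}$ is $\frac{n+j}{n+j+k}\lambda_{j+1,k}+\frac{k}{n+j+k}\lambda_{j,k-1}$.
\end{itemize}
You must show this exceeds $(p-1)\lambda_{j,k}$ for all $(j,k)\neq(0,0)$. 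It follows from the Gamma-function formula through two ratios:
\begin{itemize}
\item $\frac{\lambda_{j+1,k}}{\lambda_{j,k}}=\frac{j+\mu/4}{j+n+1-\mu/4}$, which is increasing in $j$ because $\mu<Q$, and equals $p-1$ at $j=0$;
\item $\frac{\lambda_{j,k-1}}{\lambda_{j,k}}=\frac{k+n-\mu/4}{k-1+\mu/4}>1$.
\end{itemize}
Equivalently, you can diagonalize the kernel $\mathrm{Re}(\zeta\cdot\bar\omega)\,|1-\zeta\cdot\bar\omega|^{-\mu/2}$ directly. Without a bound of this kind on all bidegrees, your argument does not exclude nonconstant optimizers.

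A smaller point: the reduction from the bilinear form to the quadratic one uses Cauchy--Schwarz for the positive-definite form $\langle\cdot,K\cdot\rangle$ (all $\lambda_{j,k}>0$), not H\"{o}lder. The same applies to the conclusion $f\equiv(\mathrm{const.})h$.
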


Similarly, by \eqref{eq:HLSH}, $Q^{\ast}_{\mu}=\frac{2Q-\mu}{Q-2}$ calls the upper critical exponent.
For the upper critical case,  Yang and Zhang \cite{YZ251} and Zhang et al. \cite{ZWZLX25} proved: 
\begin{lemma}
	Let $Q\geq 4$, $0<\mu<Q$. Then for any $u\in S^{1,2}(\mathbb{H}^{n})\setminus\{0\}$, the inequality
	\begin{equation}\label{eq:HLS'}
		S_{HL}(Q,\mu)	\left(\int_{\mathbb{H}^{n}}\int_{\mathbb{H}^{n}}\frac{|u(\xi)|^{Q^{\ast}_{\mu}}|u(\eta)|
^{Q^{\ast}_{\mu}}}{|\eta^{-1}\xi|^{\mu}}{d}\xi{d}\eta\right)^{\frac{1}{Q^{\ast}_{\mu}}}\leq \|\nabla_{\mathbb{H}}u\|^2_{L^2(\mathbb{H}^{n})}
	\end{equation}
holds with the sharp constant
\begin{equation*}
  S_{HL}(Q,\mu)=S(Q)C(Q,\mu)^{-\frac{1}{Q_\mu^*}}.
\end{equation*}
The equality in \eqref{eq:HLS'} holds
if and only if
	\begin{equation*}
u(\xi)=c\mathfrak{g}_{\lambda,\xi_0}U(\xi)
	\end{equation*}
	for some $c\in\mathbb{C}\setminus\{0\}$, $\lambda>0$ and $\xi_0\in \mathbb{H}^n$, where $\mathfrak{g}_{\lambda,\xi_0}U$ is defined by \eqref{gU}. Moreover, 
$\mathfrak{g}_{\lambda,\xi}U$ (for all $\lambda>0$ and $\xi\in\mathbb{H}^{n}$) is the unique positive solution  of the Euler-Lagrange equation
\begin{equation}\label{limit3}
  -\Delta_{\mathbb{H}} u=\alpha(Q,\mu)\left(\int_{\mathbb{H}^{n}}\frac{|u(\eta)|
^{Q^{\ast}_{\mu}}}{|\eta^{-1}\xi|^{\mu}}{d}\eta\right)|u|^{Q_\mu^*-2}u,~~\xi,\eta\in\mathbb{H}^{n},
\end{equation}
where
\begin{equation*}
\alpha(Q,\mu)=S(Q)^{-\frac{Q-\mu}{2}}C(Q,\mu)^{-1}.
\end{equation*}
\end{lemma}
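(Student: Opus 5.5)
The plan is to sandwich the nonlocal inequality between the HLS inequality \eqref{eq:HLSH} and the Folland--Stein inequality \eqref{folland-stein}, and then to read off the extremals and the Euler--Lagrange equation from the equality cases of both.

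\textbf{Step 1: the inequality.} Apply \eqref{eq:HLSH} with $t=r=\frac{2Q}{2Q-\mu}$ and $f=h=|u|^{Q^*_\mu}$. Since $Q^*_\mu\cdot\frac{2Q}{2Q-\mu}=\frac{2Q}{Q-2}=Q^*$, this gives
\begin{equation*}
\int_{\mathbb{H}^{n}}\int_{\mathbb{H}^{n}}\frac{|u(\xi)|^{Q^{\ast}_{\mu}}|u(\eta)|^{Q^{\ast}_{\mu}}}{|\eta^{-1}\xi|^{\mu}}d\xi d\eta\leq C(Q,\mu)\|u\|_{L^{Q^*}(\mathbb{H}^n)}^{2Q^*_\mu}.
\end{equation*}
Raise both sides to the power $1/Q^*_\mu$ and combine with \eqref{folland-stein}. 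This yields \eqref{eq:HLS'} with constant $S(Q)C(Q,\mu)^{-1/Q^*_\mu}$.

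\textbf{Step 2: sharpness and the equality cases.} Test on $U$ itself. Since $U^{Q^*_\mu}=(2n)^{nQ^*_\mu}[(1+|z|^2)^2+t^2]^{-\frac{2Q-\mu}{4}}$, we have $U^{Q^*_\mu}=cV$. By the Frank--Lieb characterization, $U$ is therefore an equality case of \eqref{eq:HLSH}. By Jerison--Lee, $U$ is also an equality case of \eqref{folland-stein}. Hence equality holds in the chain, the constant is attained, and it is sharp.

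Conversely, suppose equality holds in \eqref{eq:HLS'}. Then equality must hold in \eqref{folland-stein}, so by \cite{JL88} we get $u=c\,\mathfrak{g}_{\lambda,\xi_0}U$. The family $\mathfrak{g}_{\lambda,\xi_0}U$ is invariant under translations and dilations. The point needing care is that the dilation factor $\lambda^{\frac{Q-2}{2}}$ raised to $Q^*_\mu$ gives $\lambda^{\frac{2Q-\mu}{2}}$, which matches the normalization of $h$ in the HLS proposition. So these functions are consistent with the HLS equality case, and no further restriction arises.

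\textbf{Step 3: the Euler--Lagrange equation.} The minimizers of the quotient satisfy
\begin{equation*}
-\Delta_{\mathbb{H}}u=\kappa\left(\int_{\mathbb{H}^{n}}\frac{|u(\eta)|^{Q^*_\mu}}{|\eta^{-1}\xi|^\mu}d\eta\right)|u|^{Q^*_\mu-2}u
\end{equation*}
with a Lagrange multiplier $\kappa$. This multiplier can be rescaled by replacing $u$ with $su$, because the two sides are homogeneous of different degrees ($1$ versus $2Q^*_\mu-1$). For $u=U$ we already know $-\Delta_{\mathbb{H}}U=U^{Q^*-1}$. So it suffices to show that the Riesz potential $\int_{\mathbb{H}^n}U^{Q^*_\mu}(\eta)|\eta^{-1}\xi|^{-\mu}d\eta$ equals a constant times $U^{Q^*-Q^*_\mu}=U^{\frac{\mu}{Q-2}}$. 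This identity follows from the equality case of HLS: the Euler--Lagrange equation of the HLS functional at the extremal $V$ gives $|\cdot|^{-\mu}*V=c'V^{\frac{2Q-\mu}{\mu}\cdot\frac{\mu}{2Q-\mu}\cdots}$. Concretely, stationarity yields $(|\cdot|^{-\mu}*h)=c'h^{r-1}$ with $r=\frac{2Q}{2Q-\mu}$, and $(U^{Q^*_\mu})^{r-1}$ is proportional to $U^{\frac{\mu}{Q-2}}$. Identifying the constants through the value of $S(Q)$ and $C(Q,\mu)$ gives $\alpha(Q,\mu)=S(Q)^{-\frac{Q-\mu}{2}}C(Q,\mu)^{-1}$.

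\textbf{Main obstacle.} The hardest part is the uniqueness of positive solutions of \eqref{limit3}, since not every positive solution need a priori be a minimizer. I would use the moving-plane or moving-sphere method adapted to the CR (Cayley transform) setting, as in the classification results cited in \cite{YZ251,ZWZLX25}. Alternatively, I would reduce to the local equation by showing that the Riesz term is forced to equal a power of $u$, and then invoke the Jerison--Lee uniqueness of positive solutions of \eqref{limit1}. I expect this classification step to be the real difficulty. The computation of the constant is routine bookkeeping.
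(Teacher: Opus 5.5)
Your Steps 1 and 2 are correct. Composing \eqref{eq:HLSH} (with $t=r=\frac{2Q}{2Q-\mu}$ and $f=h=|u|^{Q^*_\mu}$) with \eqref{folland-stein} gives the inequality. Since $U^{Q^*_\mu}$ is a multiple of $V$, both inequalities are equalities at $U$. Equality in the chain forces equality in \eqref{folland-stein}. For complex-valued $u$, as the statement allows, you need a short extra step through $|u|$ and $|\nabla_{\mathbb{H}}|u||\le|\nabla_{\mathbb{H}}u|$ to see that the phase is constant.

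The verification that $U$ solves \eqref{limit3} is also sound, and the constant you dismiss as bookkeeping does come out right; you should write it out. Pairing $\int_{\mathbb{H}^n}U^{Q^*_\mu}(\eta)|\eta^{-1}\xi|^{-\mu}d\eta=\kappa U^{\frac{\mu}{Q-2}}$ with $U^{Q^*_\mu}$ and using HLS equality gives $\kappa=C(Q,\mu)\|U\|_{L^{Q^*}}^{2Q^*_\mu-Q^*}$. Testing $-\Delta_{\mathbb{H}}U=U^{Q^*-1}$ against $U$, together with equality in \eqref{folland-stein}, gives $\|U\|_{L^{Q^*}}^{\frac{4}{Q-2}}=S(Q)$. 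Hence $\kappa=S(Q)^{\frac{Q-\mu}{2}}C(Q,\mu)=\alpha(Q,\mu)^{-1}$.

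The genuine gap is the final assertion of the lemma: that the $\mathfrak{g}_{\lambda,\xi}U$ are the \emph{only} positive solutions of \eqref{limit3}. Nothing in Steps 1--2 reaches it, because the equality characterization classifies minimizers. A positive finite-energy solution of \eqref{limit3} is not known a priori to be a minimizer.

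Neither route you sketch closes this gap.
\begin{itemize}
\item Moving planes and moving spheres rely on a rich family of reflections or inversions compatible with the equation, and $\mathbb{H}^n$ lacks such a family. There the method is known to work only under extra symmetry assumptions such as cylindrical symmetry. This is exactly why Jerison and Lee classified positive solutions of \eqref{limit1} through a CR-geometric divergence identity rather than by moving planes. So ``adapting moving planes via the Cayley transform'' cannot be taken for granted.
\item Your alternative is circular. You would prove that $\int_{\mathbb{H}^n}u^{Q^*_\mu}(\eta)|\eta^{-1}\xi|^{-\mu}d\eta$ is a multiple of $u^{\frac{\mu}{Q-2}}$ and then invoke Jerison--Lee. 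But that identity is available only once you already know $u$ is a bubble, and you give no mechanism for deriving it for an arbitrary positive solution.
\end{itemize}

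The paper does not prove this lemma either; it imports the whole statement, including the classification, from \cite{YZ251,ZWZLX25}. Your proposal therefore establishes the inequality, the sharp constant, the equality cases, and that the bubbles solve \eqref{limit3} with the stated $\alpha(Q,\mu)$. The uniqueness part must either be cited explicitly as an external classification theorem or be given a real argument.
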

For equation \eqref{limit3}, Yang and Zhang \cite{YZ251} established
the following non-degeneracy property of bubbles. 
\begin{lemma}\label{nondege}
 Let $Q\geq 4$, $0<\mu< Q$.
 If $u \in S^{1,2}(\mathbb{H}^{n})$ is a solution to
  the linearized equation
  \begin{equation*}
	-\Delta_{\mathbb{H}} v=Q^{\ast}_{\mu}\left(\int_{\mathbb{H}^{n}}\frac{|U(\eta)|^{Q^{\ast}_{\mu}-1}v(\eta)}
{|\eta^{-1}\xi|^{\mu}}{d}\eta\right)|U|^{Q^{\ast}_{\mu}-2}U+(Q^{\ast}_{\mu}-1)
\left(\int_{\mathbb{H}^{n}}\frac{|U(\eta)|^{Q^{\ast}_{\mu}}}{|\eta^{-1}\xi|^{\mu}}{d}\eta\right)
|U|^{Q^{\ast}_{\mu}-2}v,
\end{equation*}
  then $u$ must be a linear combination of the functions  $\{Z^{a}\}_{a=1}^{2n +2}$, where $Z^a$ are defined by \eqref{Z1} and \eqref{Z2}.
\end{lemma}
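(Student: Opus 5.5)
The plan is to transfer the linearized equation from $\mathbb{H}^{n}$ to the CR sphere $\mathbb{S}^{2n+1}\subset\mathbb{C}^{n+1}$ via the Cayley transform. On the sphere both the sub-Laplacian part and the nonlocal part become operators that are diagonal on the bigraded spherical harmonics $\mathcal{H}_{j,k}$. The kernel can then be read off by comparing eigenvalues.

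\textbf{Step 1: simplify the local coefficient.} Write $p=Q^{\ast}_{\mu}$. From the equality case of \eqref{eq:HLSH}, $U^{p}$ is (a multiple of) an HLS optimizer. By the Frank--Lieb intertwining property, the Riesz potential of $U^{p}$ on $\mathbb{H}^{n}$ is an explicit power of $U$:
$$\int_{\mathbb{H}^{n}}\frac{U(\eta)^{p}}{|\eta^{-1}\xi|^{\mu}}d\eta=c_{1}\,U(\xi)^{\frac{Q-\mu}{Q-2}}.$$
This is also forced by \eqref{limit3} being solved by $U$, up to the constant $\alpha(Q,\mu)$. Since $p-2+\frac{Q-\mu}{Q-2}=\frac{4}{Q-2}$, the second term of the linearized equation equals $(p-1)c_{1}U^{\frac{4}{Q-2}}v$.

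\textbf{Step 2: pass to the sphere.} Let $\mathcal{C}:\mathbb{S}^{2n+1}\setminus\{pt\}\to\mathbb{H}^{n}$ be the Cayley transform, and set $w=(v/U)\circ\mathcal{C}$. Then $U^{\frac{4}{Q-2}}d\xi$ corresponds (up to constants) to surface measure, and $-\Delta_{\mathbb{H}}$ becomes the conformal sub-Laplacian $\mathcal{L}=-\Delta_{b}+\frac{n^{2}}{4}$. The nonlocal term becomes
$$\mathcal{K}_{\mu}w(\zeta)=\int_{\mathbb{S}^{2n+1}}\frac{w(\omega)}{|1-\zeta\cdot\bar\omega|^{\mu/2}}d\omega,$$
because $|\eta^{-1}\xi|^{-\mu}$ transforms into this kernel times Jacobian factors that are exactly absorbed by the powers $U^{p-1}$, $U^{p-2}$. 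The condition $v\in S^{1,2}(\mathbb{H}^{n})$ gives $w\in S^{1,2}(\mathbb{S}^{2n+1})$, so expanding $w=\sum_{j,k}w_{j,k}$ with $w_{j,k}\in\mathcal{H}_{j,k}$ is legitimate. The equation becomes
$$\mathcal{L}w=a\,\mathcal{K}_{\mu}w+b\,w,$$
with explicit $a=pc_{2}>0$ and $b=(p-1)c_{1}'>0$.

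\textbf{Step 3: compare eigenvalues.} On $\mathcal{H}_{j,k}$, $\mathcal{L}$ acts as the scalar $\lambda_{j,k}=(j+\frac n2)(k+\frac n2)$, up to normalization. By the Funk--Hecke formula of Frank--Lieb, $\mathcal{K}_{\mu}$ acts as
$$\kappa_{j,k}=C\,\frac{\Gamma(j+\frac{\mu}{4})\,\Gamma(k+\frac{\mu}{4})}{\Gamma(j+\frac{2Q-\mu}{4})\,\Gamma(k+\frac{2Q-\mu}{4})}\cdot\frac{\Gamma(\frac{Q-\mu}{2})}{\Gamma(\frac{\mu}{4})^{2}},$$
with the exact form of the Gamma ratios to be double-checked. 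Thus $w_{j,k}\neq0$ forces $\lambda_{j,k}=a\kappa_{j,k}+b$.

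The constants are pinned down by two known solutions.
\begin{itemize}[leftmargin=*]
\item The pair $(0,0)$ corresponds to $v=U$. Testing \eqref{limit3} against $U$ gives $\lambda_{0,0}=\frac{a}{p}\kappa_{0,0}+\frac{b}{p-1}$. Since $p>1$, this shows that $(0,0)$ is \emph{not} a solution of $\lambda_{j,k}=a\kappa_{j,k}+b$.
\item The functions $Z^{a}$ correspond to $\mathcal{H}_{1,0}\oplus\mathcal{H}_{0,1}$, whose real part has dimension $2n+2$. Since $Z^{a}$ solve the linearized equation (by differentiating the family $\mathfrak g_{\lambda,\xi_0}U$), equality holds at $(1,0)$ and $(0,1)$.
\end{itemize}
Now $\lambda_{j,k}$ is strictly increasing in each of $j$ and $k$. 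Since $\frac{\mu}{4}<\frac{2Q-\mu}{4}$, the ratio $\Gamma(j+\frac{\mu}{4})/\Gamma(j+\frac{2Q-\mu}{4})$ is strictly decreasing in $j$, and likewise in $k$. Hence $j,k\mapsto \lambda_{j,k}-a\kappa_{j,k}-b$ is strictly increasing in each index. It vanishes at $(1,0)$ and $(0,1)$, so it is positive whenever $j+k\ge2$. Consequently $w\in\mathcal{H}_{1,0}\oplus\mathcal{H}_{0,1}$, and pulling back shows that $v$ is a linear combination of the $Z^{a}$.

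\textbf{Main obstacle.} The hardest part is Step 2 together with the eigenvalue formula. One must verify carefully that the Heisenberg kernel $|\eta^{-1}\xi|^{-\mu}$, built with the homogeneous norm, transforms under the Cayley map into $|1-\zeta\cdot\bar\omega|^{-\mu/2}$ with Jacobian factors matching $U^{p-1}$ and $U^{p-2}$ exactly. The Funk--Hecke eigenvalues $\kappa_{j,k}$ must also be computed in the bigraded setting. I would take both from Frank--Lieb \cite{FL12}, where this intertwining structure underlies the sharp constant \eqref{equH}. Once these are in place, the monotonicity argument is elementary.
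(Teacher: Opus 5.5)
The paper gives no proof of this lemma; it imports it from Yang--Zhang \cite{YZ251}. Your argument is a self-contained proof along the natural route, the Heisenberg analogue of the stereographic-projection/Funk--Hecke proof for the Euclidean Hartree equation. Its structure is sound.

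\textbf{The step you flagged as the main obstacle checks out.} Write $v=U\,(w\circ\mathcal{C})$ and $\Phi=(1+|z|^2)^2+t^2$. Then
\begin{itemize}
\item $|\eta^{-1}\xi|^{-\mu}\propto|1-\zeta\cdot\bar\omega|^{-\mu/2}\,\Phi(\xi)^{-\mu/4}\,\Phi(\eta)^{-\mu/4}$;
\item $d\eta\propto\Phi(\eta)^{Q/2}\,d\omega$;
\item $U^{p}\propto\Phi^{-(2Q-\mu)/4}$.
\end{itemize}
So the nonlocal term is exactly $U^{Q^*-1}$ times $\mathcal{K}_\mu w$. Your $(j,k)$-dependence of $\kappa_{j,k}$ agrees with Frank--Lieb. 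Strict monotonicity of $F(j,k)=\lambda_{j,k}-a\kappa_{j,k}-b$ in each index uses $\mu<Q$. Together with $F(1,0)=F(0,1)=0$, it leaves only $\mathcal{H}_{1,0}\oplus\mathcal{H}_{0,1}$.

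Compared with the citation, your route also gives Lemma \ref{eigen}. On the sphere, \eqref{eig} has eigenvalues $\Lambda_{j,k}=(\lambda_{j,k}+\beta)/(\gamma\kappa_{j,k}+\beta)$ with $\beta,\gamma>0$. These are strictly increasing in $j$ and $k$, equal $1$ at $(0,0)$, and equal $p$ at $(1,0)$ and $(0,1)$.

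Several details need correcting.
\begin{itemize}
\item \textbf{Step 1 has the wrong exponent.} Take $h=U^{p}$ and $t=\frac{2Q}{2Q-\mu}$. The HLS Euler--Lagrange equation gives $\int_{\mathbb{H}^n}U^{p}(\eta)|\eta^{-1}\xi|^{-\mu}d\eta=c_1U^{p(t-1)}=c_1U^{\frac{\mu}{Q-2}}$. This matches the $|\xi|^{-\mu}$ decay, and also \eqref{limit1} and \eqref{limit3}, since $Q^*-p=\frac{\mu}{Q-2}$. Your identity $p-2+\frac{Q-\mu}{Q-2}=\frac{4}{Q-2}$ is false. The true identity is $p-2+\frac{\mu}{Q-2}=\frac{4}{Q-2}$, because $p-2=\frac{4-\mu}{Q-2}$. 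Also, it is $U^{Q^*}d\xi$, not $U^{\frac{4}{Q-2}}d\xi$, that corresponds to surface measure.
\item \textbf{Your relation at $(0,0)$ double counts.} For $w\equiv1$, each of the two terms separately reproduces $\big(\int U^{p}|\eta^{-1}\xi|^{-\mu}d\eta\big)U^{p-1}$. Hence $\lambda_{0,0}=\frac{a}{p}\kappa_{0,0}=\frac{b}{p-1}$, and $a\kappa_{0,0}+b=(2p-1)\lambda_{0,0}$. The step is redundant anyway, since monotonicity already gives $F(0,0)<F(1,0)=0$.
\item \textbf{The pinning needs a normalization statement.} $F(1,0)=F(0,1)=0$ uses that the $Z^a$ solve the displayed equation. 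The paper's $U$ solves \eqref{limit1}, hence \eqref{limit3} with the constant $\alpha(Q,\mu)$. This constant is not $1$ in general; it is $<1$ for $\mu\le 4$ by Lemma \ref{tech}. So the linearization at $U$ carries the factor $\alpha(Q,\mu)$, which the displayed equation omits. State explicitly that you linearize the equation $U$ actually solves, or rescale $U$.
\item \textbf{The spanning claim needs a sentence.} To conclude ``spanned by the $Z^a$'', you need the functions $(Z^a/U)\circ\mathcal{C}^{-1}$ to span the real part of $\mathcal{H}_{1,0}\oplus\mathcal{H}_{0,1}$. This follows from their linear independence and the dimension count $2n+2$. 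For a quick check that both $(1,0)$ and $(0,1)$ actually occur: $Z^{2n+2}/U=n\,\frac{1-|z|^4-t^2}{(1+|z|^2)^2+t^2}$, which is $n$ times the real part of the last coordinate of the Cayley transform.
\end{itemize}
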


In spite of the work Deng et al. \cite{DTYZ23} and Loiudice \cite{L05}, in this paper, we first are concerned with the remainder term of inequality \eqref{eq:HLS'} for $0<\mu< Q$ with $\mu\leq 4$.
Now, we present our first result as follows.
\begin{theorem}\label{main thm0}
Let $Q\geq 4$, $0<\mu< Q$ with $\mu\leq 4$.
 Then there exist two constants $A_2>A_1>0$ such that for every $u\in S^{1,2}(\mathbb{H}^{n})$, it holds that
 \begin{equation*}
A_2 ~  \mathrm{dist}(u,\mathfrak{M})^2\geq  \int_{\mathbb{H}^{n}}|\nabla_{\mathbb{H}} u|^{2}{d}\xi-S_{HL}(Q,\mu)	\left(\int_{\mathbb{H}^{n}}\int_{\mathbb{H}^{n}}\frac{|u(\xi)|^{Q^{\ast}_{\mu}}|u(\eta)|
^{Q^{\ast}_{\mu}}}{|\eta^{-1}\xi|^{\mu}}{d}\xi{d}\eta\right)^{\frac{1}{Q^{\ast}_{\mu}}}\geq A_1 ~  \mathrm{dist}(u,\mathfrak{M})^2,
\end{equation*}
where $\mathfrak{M}=\big\{c\mathfrak{g}_{\lambda,\xi}U: c\in \mathbb{C},\lambda>0,\xi\in \mathbb{H}^n\big\}$ is an $(2n+3)$-dimensional manifold and $\mathrm{dist}(u,\mathfrak{M})=\inf_{c\in \mathbb{C},\lambda>0,\xi\in \mathbb{H}^n}\|u-c\mathfrak{g}_{\lambda,\xi}U\|_{S^{1,2}(\mathbb{H}^{n})}$.
\end{theorem}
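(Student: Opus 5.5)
We follow the Bianchi--Egnell scheme as adapted by Deng et al.\ \cite{DTYZ23} and Loiudice \cite{L05}. Write
\[
\mathcal{D}(u)=\int_{\mathbb{H}^{n}}\int_{\mathbb{H}^{n}}\frac{|u(\xi)|^{Q^{\ast}_{\mu}}|u(\eta)|^{Q^{\ast}_{\mu}}}{|\eta^{-1}\xi|^{\mu}}d\xi d\eta,
\qquad
\Phi(u)=\|u\|_{S^{1,2}(\mathbb{H}^{n})}^2-S_{HL}(Q,\mu)\mathcal{D}(u)^{1/Q^{\ast}_{\mu}} .
\]

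\emph{Upper bound.} Take $u$ and let $c\,\mathfrak{g}_{\lambda,\xi_0}U$ be a near-minimizer of the distance. Write $u=c\,\mathfrak{g}_{\lambda,\xi_0}U+w$. Since $\Phi\geq 0$, $\Phi$ vanishes on $\mathfrak{M}$, and $\Phi$ is $C^2$ near $\mathfrak{M}$, a Taylor expansion gives $\Phi(u)\le C\|w\|^2$ whenever $\|w\|\le\varepsilon\|u\|$. The first derivative vanishes on $\mathfrak{M}$, and the second derivative is bounded by the HLS inequality \eqref{eq:HLSH} combined with \eqref{folland-stein}. In the opposite regime $\|w\|>\varepsilon\|u\|$, we use $\Phi(u)\le\|u\|^2\le\varepsilon^{-2}\mathrm{dist}(u,\mathfrak{M})^2$. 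By homogeneity we may normalise $\|u\|=1$, and invariance under $\mathfrak{g}_{\lambda,\xi}$ lets us take $\lambda=1$, $\xi_0=0$.

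\emph{Local lower bound.} First show that $\mathrm{dist}(u,\mathfrak{M})$ is attained when $\mathrm{dist}(u,\mathfrak{M})<\|u\|$. This uses the noncompactness of the dilation/translation parameters: as $\lambda\to0,\infty$ or $|\xi|\to\infty$, $\mathfrak{g}_{\lambda,\xi}U\rightharpoonup0$, so $\|u-c\mathfrak{g}U\|^2\to\|u\|^2+c^2\|U\|^2$. At a minimizer, $w=u-cU$ (after normalising) is orthogonal in $S^{1,2}$ to $U$ and to the tangent vectors $Z^a$, $a=1,\dots,2n+2$. Using the equation \eqref{limit3}, this orthogonality translates into orthogonality in the weighted sense with respect to the linearized operator. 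Next expand $\mathcal{D}(U+w)^{1/Q^{\ast}_\mu}$ to second order, writing $\mathcal{D}(U+w)=\mathcal{D}(U)+2Q^{\ast}_\mu\iint\frac{U^{Q^{\ast}_\mu}U^{Q^{\ast}_\mu-1}w}{|\eta^{-1}\xi|^\mu}+Q^{\ast}_\mu\iint\frac{U^{Q^{\ast}_\mu-1}wU^{Q^{\ast}_\mu-1}w}{|\eta^{-1}\xi|^\mu}+Q^{\ast}_\mu(Q^{\ast}_\mu-1)\iint\frac{U^{Q^{\ast}_\mu}U^{Q^{\ast}_\mu-2}w^2}{|\eta^{-1}\xi|^\mu}+o(\|w\|^2)$. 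The remainder estimate requires pointwise inequalities of the form $\big||a+b|^{p}-a^p-pa^{p-1}b-\tfrac{p(p-1)}2a^{p-2}b^2\big|\le C(|b|^{p}+a^{p-3}|b|^3)$ for $p=Q^{\ast}_\mu$, together with HLS. This is exactly where the restriction $\mu\le4$ enters: it gives $Q^{\ast}_\mu\ge2$, since $(2Q-\mu)/(Q-2)\ge2\iff\mu\le4$, so the second-order expansion of $|\cdot|^{Q^{\ast}_\mu}$ is legitimate with an $o(\|w\|^2)$ error. The first-order term vanishes by orthogonality to $U$. One then obtains
\[
\Phi(u)\ge \|w\|^2-\big(Q^{\ast}_\mu\text{-quadratic form}\big)(w)+o(\|w\|^2).
\]

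\emph{Spectral gap (main obstacle).} We need the quadratic form associated with the linearized operator in Lemma \ref{nondege} to have a gap on the orthogonal complement of $\mathrm{span}\{U,Z^1,\dots,Z^{2n+2}\}$. We would consider the weighted eigenvalue problem $-\Delta_{\mathbb{H}}v=\nu\,\mathcal{L}v$, where $\mathcal{L}$ is the nonlocal operator on the right-hand side of Lemma \ref{nondege} divided by the appropriate factor; equivalently, we use the Cayley transform to pass to the CR sphere. The embedding is compact in the weighted space, since the weights $U^{Q^\ast_\mu-2}\int U^{Q^\ast_\mu}|\cdot|^{-\mu}$ decay fast, so the spectrum is discrete. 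The eigenvalue $\nu_1$ has eigenfunction $U$, and $\nu=1$ corresponds to the kernel, which Lemma \ref{nondege} identifies as $\mathrm{span}\{Z^a\}$. Hence on the complement the next eigenvalue satisfies $\nu_3>1$. Minimality of $U$ for \eqref{eq:HLS'} rules out eigenvalues strictly between $\nu_1$ and $1$, by a second-variation argument. This yields $\Phi(u)\ge(1-1/\nu_3)\|w\|^2+o(\|w\|^2)$ for $\mathrm{dist}(u,\mathfrak{M})\le\delta\|u\|$. Handling the nonlocal term in the eigenvalue problem, in particular showing that $U$ is the first eigenfunction and that the form is self-adjoint and compact, is the most delicate part.

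\emph{Global lower bound.} Argue by contradiction. Suppose $u_k$ with $\|u_k\|=1$, $\mathrm{dist}(u_k,\mathfrak{M})>\delta$, and $\Phi(u_k)\to0$. Then $u_k$ is a minimizing sequence for \eqref{eq:HLS'}. By concentration-compactness on $\mathbb{H}^n$, in the form used in \cite{YZ251,ZWZLX25} for existence of extremals, after applying some $\mathfrak{g}_{\lambda_k,\xi_k}$ it converges strongly to an extremal, that is, to an element of $\mathfrak{M}$. This forces $\mathrm{dist}(u_k,\mathfrak{M})\to0$, a contradiction. Combining the local and global regimes gives $A_1$.
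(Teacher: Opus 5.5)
Your proof is correct and has the same architecture as the paper's. The local part is a second-order expansion of $\Phi$ at a distance-minimizing $c\,\mathfrak{g}_{\lambda,\xi}U$, using $Q^*_\mu\ge2$ and orthogonality to $T\mathfrak{M}$, followed by a spectral gap on the complement of the tangent space. The far regime is handled by contradiction via concentration--compactness. Your direct Taylor argument for $A_2$ is the limsup half of Lemma~\ref{local} without the contradiction wrapper.

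The real difference is the spectral step. The paper imports the eigenvalue description of Lemma~\ref{eigen} from \cite{ZXW25} ($\Lambda_1=1$, $\Lambda_2=Q^*_\mu$, $\Lambda_3>Q^*_\mu$ for the generalized problem \eqref{eig}). It combines this with a four-case split on $P_{k,1},P_{k,2}$ and the numerical inequality $S(Q)^{\frac{Q-\mu}{2}}C(Q,\mu)>1$ of Lemma~\ref{tech}.

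You instead derive the gap from non-degeneracy alone. Let $\mathcal{L}$ be the linearization at $U$ of the right-hand side of \eqref{limit3}; then $T=(-\Delta_{\mathbb{H}})^{-1}\mathcal{L}$ is compact and self-adjoint on $S^{1,2}(\mathbb{H}^n)$.
\begin{itemize}
\item Since $TU=(2Q^*_\mu-1)U$, the space $U^\perp$ is $T$-invariant.
\item Nonnegativity of the second variation of $\Phi$ at the minimizer $U$ gives $\langle\mathcal{L}w,w\rangle\le\|w\|^2_{S^{1,2}(\mathbb{H}^n)}$ for $w\perp U$.
\item Lemma~\ref{nondege} identifies the eigenvalue-$1$ space with $\mathrm{span}\{Z^a\}$.
\item Discreteness then gives $\sup\langle\mathcal{L}w,w\rangle/\|w\|^2<1$ on $\{U,Z^a\}^\perp$.
\end{itemize}
The second-variation step already settles what you call the delicate part: $U$ is the top eigenfunction, and nothing lies strictly between it and the $Z^a$.

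Moreover, the factor multiplying the quadratic form is $S_{HL}(Q,\mu)\mathcal{D}(U)^{1/Q^*_\mu-1}=\|U\|^2_{S^{1,2}(\mathbb{H}^n)}/\mathcal{D}(U)$, which equals $\alpha(Q,\mu)$ (test \eqref{limit3} with $U$). So it is absorbed into $\mathcal{L}$. In your formulation neither Lemma~\ref{tech} nor the case analysis is needed, and the argument rests only on non-degeneracy rather than on the cited eigenvalue lemma.

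Two small corrections.
\begin{itemize}
\item In your expansion of $\mathcal{D}(U+w)$, the coefficient of $\iint U^{Q^*_\mu-1}w\,U^{Q^*_\mu-1}w\,|\eta^{-1}\xi|^{-\mu}d\xi d\eta$ must be $(Q^*_\mu)^2$, not $Q^*_\mu$. Only then is the second-order term of $S_{HL}(Q,\mu)\mathcal{D}(U+w)^{1/Q^*_\mu}$ equal to $\langle\mathcal{L}w,w\rangle=\alpha(Q,\mu)\big[(Q^*_\mu-1)P_1(w)+Q^*_\mu P_2(w)\big]$, with $P_1,P_2$ as in \eqref{local3}. This is the form your spectral argument uses.
\item For $2<Q^*_\mu<3$, the term $a^{Q^*_\mu-3}|b|^3$ in your remainder bound cannot be integrated, since it blows up where $U$ is small. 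Use instead that the remainder is $\lesssim\min\{a^{Q^*_\mu-3}|b|^3,|b|^{Q^*_\mu}\}\le|b|^{Q^*_\mu}$. By HLS this gives $O(\|w\|^{Q^*_\mu})=o(\|w\|^2)$. For $Q^*_\mu=2$ the expansion is exact.
\end{itemize}
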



\begin{remark}
{\rm The restriction $\mu\leq 4$ is used to guarantee $Q_\mu^*\geq 2$, which is used in Lemma \ref{local}.}
\end{remark}

As a corollary of Theorem \ref{main thm0}, we will consider a remainder term inequality on bounded domains $\Omega\subset \mathbb{H}^n$. In sprite of the work of Brezis and Lieb \cite{BL85}, for each bounded domain $\Omega\subset \mathbb{H}^n$, we define the weak $L^q$-norm
\begin{equation}\label{wen}
  \|u\|_{L^q_w(\Omega)}=\sup\limits_{D\subset \Omega}\frac{\int_{D}|u|dx}{|D|^{\frac{q-1}{q}}}.
\end{equation}
In \cite{L06} (see also \cite{L05}), Loiudice  proved that there exists $A'>0$ such that
\begin{equation*}
  \|\nabla_\mathbb{H} u\|_{L^{2}(\Omega)}^{2}-S(Q)\|u\|_{L^{Q^{\ast}}(\Omega)}^{2}\geq A'  \|u\|_{L^{\frac{Q}{Q-2}}_w(\Omega)}^2,
  \quad \forall \, u\in{S}_0^{1,2}(\Omega),
\end{equation*}
where $S_0^{1,2}(\Omega)$ denotes the Folland-Stein-Sobolev
space defined as the completion of $C_c^\infty(\Omega)$ with respect to the norm $\|\cdot\|_{S^{1,2}(\mathbb{H}^n)}$.
Then, we establish the second-type remainder term for inequality \eqref{eq:HLS'} in a bounded domain, which is stated
as follows.
\begin{theorem}\label{main thm1}
Let $\Omega\subset \mathbb{H}^n$ be a bounded domain, $Q\geq 4$, $0<\mu< Q$ with $\mu\leq 4$.
 Then there exists $B'>0$ such that for every $u\in S_0^{1,2}(\Omega)$, it holds that
 \begin{equation*}
  \int_{\Omega}|\nabla_{\mathbb{H}} u|^{2}{d}\xi-S_{HL}(Q,\mu)	\left(\int_{\Omega}\int_{\Omega}\frac{|u(\xi)|^{Q^{\ast}_{\mu}}|u(\eta)|
^{Q^{\ast}_{\mu}}}{|\eta^{-1}\xi|^{\mu}}{d}\xi{d}\eta\right)^{\frac{1}{Q^{\ast}_{\mu}}}\geq B'  \|u\|_{L^{\frac{Q}{Q-2}}_w(\Omega)}^2,
\end{equation*}
where $L^{\frac{Q}{Q-2}}_w$ denotes the weak $L^{\frac{Q}{Q-2}}$-norm as in \eqref{wen}.
\end{theorem}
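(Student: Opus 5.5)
Extend every $u\in S_0^{1,2}(\Omega)$ by zero to a function of $S^{1,2}(\mathbb{H}^n)$. The gradient integral and the double integral then take the same values on $\Omega$ and on $\mathbb{H}^n$. Theorem \ref{main thm0} therefore gives
\begin{equation*}
\int_{\Omega}|\nabla_{\mathbb{H}} u|^{2}d\xi-S_{HL}(Q,\mu)\left(\int_{\Omega}\int_{\Omega}\frac{|u(\xi)|^{Q^{\ast}_{\mu}}|u(\eta)|^{Q^{\ast}_{\mu}}}{|\eta^{-1}\xi|^{\mu}}d\xi d\eta\right)^{\frac{1}{Q^{\ast}_{\mu}}}\geq A_1\,\mathrm{dist}(u,\mathfrak{M})^2 .
\end{equation*}
It remains to show that $\mathrm{dist}(u,\mathfrak{M})\geq c\,\|u\|_{L^{\frac{Q}{Q-2}}_w(\Omega)}$ for all $u\in S_0^{1,2}(\Omega)$, with $c>0$ depending only on $\Omega$ and $Q$. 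This reduction is exactly what Brezis--Lieb did in $\mathbb{R}^N$ and Loiudice \cite{L06} on $\mathbb{H}^n$. Since $\mathrm{dist}(u,\mathfrak{M})$ does not involve $\mu$ at all, I will follow Loiudice's argument for this step.

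Fix $\lambda>0$, $\xi_0\in\mathbb{H}^n$ and $c\in\mathbb{C}$, and set $w=c\,\mathfrak{g}_{\lambda,\xi_0}U$. The plan has three steps.

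First, I claim $\|w\|_{L^{\frac{Q}{Q-2}}_w(\Omega)}\leq C_0\|\nabla_{\mathbb{H}}w\|_{L^2(\mathbb{H}^n)}=C_0|c|\|U\|_{S^{1,2}}$ with $C_0$ independent of $\lambda,\xi_0$. The reason is that $U(\xi)\lesssim (1+|\xi|)^{2-Q}$, which lies in weak $L^{\frac{Q}{Q-2}}(\mathbb{H}^n)$, and the weak $L^{\frac{Q}{Q-2}}$ quasinorm is invariant under the scaling $\lambda^{\frac{Q-2}{2}}U(\delta_\lambda\cdot)$ only up to the factor $\lambda^{\frac{Q-2}{2}}\lambda^{-(Q-2)}=\lambda^{-\frac{Q-2}{2}}$. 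So I need the sharper observation that $\|w\|_{L^{Q/(Q-2)}_w}$ is controlled via the Lorentz-space Sobolev embedding $S^{1,2}\hookrightarrow L^{Q^*,2}\hookrightarrow L^{\frac{Q}{Q-2},\infty}(\Omega)$ on the bounded set $\Omega$, whose constant depends on $|\Omega|$. More simply, Hölder gives $\int_D|v|\leq \|v\|_{L^{Q^*}}|D|^{1-1/Q^*}$, and $1-1/Q^*\geq 2/Q$, hence $\|v\|_{L_w^{Q/(Q-2)}(\Omega)}\leq |\Omega|^{1-1/Q^*-2/Q}\|v\|_{L^{Q^*}}\leq C_\Omega\|\nabla_{\mathbb{H}}v\|_{L^2}$ for every $v\in S^{1,2}(\mathbb{H}^n)$.

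Second, I argue by cases. If $\|u-w\|_{S^{1,2}}\geq \tfrac12\|u\|_{S^{1,2}}$, then the Hölder bound just obtained gives $\|u-w\|_{S^{1,2}}\gtrsim\|u\|_{L_w^{Q/(Q-2)}(\Omega)}$ directly.

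Third, suppose instead that $\|u-w\|_{S^{1,2}}<\tfrac12\|u\|_{S^{1,2}}$, so that $w$ carries a mass comparable to that of $u$. Here I need the key lemma from Loiudice: since $u$ vanishes outside $\Omega$ while $w>0$ everywhere, one has $\|u-w\|_{S^{1,2}}\geq \|w\|_{S^{1,2}(\mathbb{H}^n\setminus\Omega)}$-type bounds, giving $\|u-w\|_{S^{1,2}}\geq \delta(\Omega)|c|$ uniformly in $\lambda,\xi_0$. Concretely, $\|u-w\|_{L^{Q^*}}\geq \|w\|_{L^{Q^*}(\mathbb{H}^n\setminus\Omega)}$, and $\inf_{\lambda,\xi_0}\|\mathfrak{g}_{\lambda,\xi_0}U\|_{L^{Q^*}(\mathbb{H}^n\setminus\Omega)}>0$. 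This infimum is positive because concentration ($\lambda\to\infty$) inside $\Omega$ is excluded by the regime assumption combined with the weak-norm estimate. For $\lambda\to 0$ or $|\xi_0|\to\infty$ the mass escapes from $\Omega$, and compact parameter ranges are handled by continuity.

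The main obstacle is precisely the concentrating regime $\lambda\to\infty$, $\xi_0\in\overline\Omega$, where $\|w\|_{L^{Q^*}(\mathbb{H}^n\setminus\Omega)}\to0$. In that regime I use instead that $\|w\|_{L^{Q/(Q-2)}_w(\Omega)}\lesssim |c|\lambda^{-\frac{Q-2}{2}}$, by a direct computation on $U$, which is small. I then write $\|u\|_{L_w}\leq\|u-w\|_{L_w}+\|w\|_{L_w}\leq C_\Omega\|u-w\|_{S^{1,2}}+C|c|\lambda^{-\frac{Q-2}{2}}$ and balance this against the lower bound $\|u-w\|_{L^{Q^*}}\geq\|w\|_{L^{Q^*}(\mathbb{H}^n\setminus\Omega)}\gtrsim|c|\lambda^{-\frac{Q-2}{2}}$, the last decay rate coming from the tail $U\sim|\xi|^{2-Q}$. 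Together these yield $\|u\|_{L_w}\lesssim\|u-w\|_{S^{1,2}}$ uniformly. Taking the infimum over $w\in\mathfrak{M}$ and combining with the first display gives the theorem with $B'=A_1c^2$.
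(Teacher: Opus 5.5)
Your argument is correct in substance, but it is organized differently from the paper's.

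\textbf{Comparison with the paper.} The paper argues by contradiction. For a normalized sequence violating the inequality, the deficit tends to $0$, so Theorem \ref{main thm0} gives $\mathrm{dist}(u_k,\mathfrak{M})\to0$. The paper then treats only a bubble $c_k\mathfrak{g}_{\lambda_k,\xi_k}U$ concentrating in $\Omega$. It balances $\|c_k\mathfrak{g}_{\lambda_k,\xi_k}U\|_{L_w^{\frac{Q}{Q-2}}}\lesssim |c_k|\lambda_k^{-\frac{Q-2}{2}}$ against $\mathrm{dist}(u_k,\mathfrak{M})\geq |c_k|\,\|\nabla_{\mathbb{H}}\mathfrak{g}_{\lambda_k,\xi_k}U\|_{L^2(\Omega^c)}\gtrsim |c_k|\lambda_k^{-\frac{Q-2}{2}}$. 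Your exponent $\lambda^{-\frac{Q-2}{2}}$ is the correct scaling; \eqref{weaknorm3} should read $\lambda_k^{-\frac{Q-2}{2}}$, which is what pairs with \eqref{weaknorm2}.

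You instead prove directly the $\mu$-independent inequality $\mathrm{dist}(u,\mathfrak{M})\geq c_\Omega\|u\|_{L_w^{\frac{Q}{Q-2}}(\Omega)}$ for all $u\in S_0^{1,2}(\Omega)$. You use the $L^{Q^*}$-mass of the bubble outside $\Omega$ together with the Folland--Stein inequality, rather than the gradient outside $\Omega$. This gives the explicit constant $B'=A_1c_\Omega^2$. It also sidesteps the paper's assertion that the optimal parameters satisfy $\lambda_k\to+\infty$ and $\xi_k\in\Omega$; that does not follow from Theorem \ref{main thm0} alone and needs an outside-mass estimate of your kind. The price is that your bubble estimate must hold uniformly over all $(\lambda,\xi_0)$, not only in the concentrating regime.

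\textbf{A repair.} In your third step, two claims are false: that $\inf_{\lambda,\xi_0}\|\mathfrak{g}_{\lambda,\xi_0}U\|_{L^{Q^*}(\mathbb{H}^n\setminus\Omega)}>0$, and that the regime $\|u-w\|_{S^{1,2}}<\frac12\|u\|_{S^{1,2}}$ excludes concentration. A cut-off of $\mathfrak{g}_{\lambda,\xi_0}U$ with $\xi_0\in\Omega$ and $\lambda\to+\infty$ lies in that regime. Your own next paragraph contradicts these claims, so delete them.

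Once they are gone, the case split is superfluous. For $w=c\,\mathfrak{g}_{\lambda,\xi_0}U$, H\"older and scaling give $\|w\|_{L_w^{\frac{Q}{Q-2}}(\Omega)}\lesssim_\Omega |c|\min\{1,\lambda^{-\frac{Q-2}{2}}\}$. In the other direction, $\|w\|_{L^{Q^*}(\Omega^c)}\gtrsim_\Omega |c|\min\{1,\lambda^{-\frac{Q-2}{2}}\}$ uniformly in $\xi_0$. The reason is that $\delta_\lambda(\xi_0^{-1}\Omega)$ has measure $\lambda^Q|\Omega|$ and lies in a ball of radius $\lambda\,\mathrm{diam}\,\Omega$, so its complement carries $U^{Q^*}$-mass $\gtrsim\min\{1,\lambda^{-Q}\}$.

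Since $u=0$ on $\Omega^c$, these two bounds give, for every $w\in\mathfrak{M}$,
\[
\|u\|_{L_w^{\frac{Q}{Q-2}}(\Omega)}\leq \|u-w\|_{L_w^{\frac{Q}{Q-2}}(\Omega)}+\|w\|_{L_w^{\frac{Q}{Q-2}}(\Omega)}\lesssim \|u-w\|_{L^{Q^*}(\mathbb{H}^n)}+\|w\|_{L^{Q^*}(\Omega^c)}\lesssim \|u-w\|_{S^{1,2}(\mathbb{H}^n)}.
\]
This single uniform estimate replaces your ``mass escapes / continuity on compact parameter sets'' sketch.
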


\begin{remark}
{\rm If $\mu=0$, then
Theorems \ref{main thm0} and \ref{main thm1} are exactly the conclusions obtained in \cite[Theorem 1.1]{L05} and \cite[Theorem 1.1]{L06}, respectively.
}
\end{remark}

Furthermore, we will also consider the quantitative
stability of critical points for equation \eqref{limit3}.
Denote $(S^{1,2}(\mathbb{H}^{n}))^{-1}$ as the dual space of $S^{1,2}(\mathbb{H}^{n})$, with the dual pairing
\begin{equation*}
  \langle u,v\rangle_{S^{1,2}(\mathbb{H}^{n}),(S^{1,2}(\mathbb{H}^{n}))^{-1}} =\int_{\mathbb{H}^{n}}uv d\xi,\quad \forall \, u\in S^{1,2}(\mathbb{H}^{n}),\ v\in (S^{1,2}(\mathbb{H}^{n}))^{-1}.
\end{equation*}
By using a refined asymptotic characterization in the sub-Riemannian setting of the Heisenberg group (see \cite[Theorem 1.1]{B08}), Zhang, Xu and Wang \cite{ZXW25} proved a nonlocal version of the global compactness result to \eqref{limit3} for nonnegative functions on the Heisenberg group:
\begin{theorem}\label{compactness thm}\cite[Theorem 1.8]{ZXW25}
Let $Q\geq4$, $\mu\in(0,Q)$ and $m\geq1$ be positive integers. Let
$\{u_{k}\}\subset S^{1,2}(\mathbb{H}^{n})$ be a sequence of nonnegative functions such that
\begin{equation*}
	\Big(m-\frac{1}{2}\Big)S_{HL}^{\frac{2Q-\mu}{Q+2-\mu}}\leq \|u_{k}\|_{S^{1,2}(\mathbb{H}^{n})}^{2}\leq \Big(m+\frac{1}{2}\Big)S_{HL}^{\frac{2Q-\mu}{Q+2-\mu}},
\end{equation*}
and
\begin{equation*}
\left\|\Delta_\mathbb{H} u_{k}+\left(\int_{\mathbb{H}^{n}}\frac{|u_k(\eta)|
^{Q^{\ast}_{\mu}}}{|\eta^{-1}\xi|^{\mu}}{d}\eta\right)|u_k|^{Q_\mu^*-2}u_k\right\|
_{(S^{1,2}(\mathbb{H}^{n}))^{-1}}\longrightarrow 0,\qquad\mathrm{as}~k\rightarrow+\infty.
\end{equation*}
Then there exists a sequence of parameters $\big\{\lambda_{i}^{(k)},\xi_{i}^{(k)}\big\}$ such that
\begin{equation*}
\left\|u_{k}-\mathop{\sum}\limits_{i=1}^{m}\mathfrak{g}_{\lambda_i^{(k)},\xi_i^{(k)}}U
\right\|_{S^{1,2}(\mathbb{H}^{n})}\longrightarrow0,\qquad\mathrm{as}~k\rightarrow+\infty.
\end{equation*}
Moreover, 
there exists $k_0\in \mathbb{N}$ such that bubbles $\big\{\mathfrak{g}_{\lambda_i^{(k)},\xi_i^{(k)}}\big\}_{k\geq k_0}$ is $\delta$-weakly interacting in the following sense: for $i\neq j$,
we define the quantity
\begin{equation*}
  \varepsilon_{ij}=\varepsilon(\lambda_i,\lambda_j,\xi_i,\xi_j)=\min\bigg\{\frac{\lambda_i}{\lambda_j},\frac{\lambda_j}{\lambda_i},
  \frac{1}{\lambda_i\lambda_j|\xi_i^{-1} \xi_j|^2}\bigg\},\quad \varepsilon=\max\limits_{i\neq j}\{\varepsilon_{ij}\},
\end{equation*}
then
\begin{equation*}
 \varepsilon^{(k)}= \max\limits_{i\neq j}\{\varepsilon_{ij}^{(k)}\}=\max\limits_{i\neq j}\Big\{\varepsilon\Big(\lambda_i^{(k)},\lambda_j^{(k)},\xi_i^{(k)},\xi_j^{(k)}\Big)\Big\}< \delta,\qquad \mathrm{for ~ all} ~ k\geq k_0.
\end{equation*}


\end{theorem}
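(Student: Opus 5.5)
The plan is a Struwe-type profile decomposition adapted to the nonlocal Choquard term on $\mathbb{H}^n$, with the Jerison--Lee classification used to identify every profile.

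\textbf{Step 1: compactness tools.} Write $I(u)=\frac12\|u\|^2_{S^{1,2}}-\frac{1}{2Q^*_\mu}D(u)$, where
\[
D(u)=\int_{\mathbb{H}^n}\int_{\mathbb{H}^n}\frac{|u(\xi)|^{Q^*_\mu}|u(\eta)|^{Q^*_\mu}}{|\eta^{-1}\xi|^\mu}\,d\xi\, d\eta .
\]
The hypotheses say that $\{u_k\}$ is bounded in $S^{1,2}(\mathbb{H}^n)$ and that $I'(u_k)\to0$ in the dual. Testing $I'(u_k)$ against $u_k$ gives $\|u_k\|^2-D(u_k)\to0$. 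I would first prove the nonlocal Brezis--Lieb splitting. If $u_k\rightharpoonup u$ and $u_k\to u$ a.e., then
\[
D(u_k)=D(u)+D(u_k-u)+o(1),
\]
and also $I'(u_k)-I'(u)-I'(u_k-u)\to0$ in $(S^{1,2})^{-1}$. Both follow from the local Brezis--Lieb lemma in $L^{Q^*}$ combined with the HLS inequality \eqref{eq:HLSH} at $t=r=\frac{2Q}{2Q-\mu}$. Together with the invariance of $I$ under the maps $\mathfrak{g}_{\lambda,\xi}$, these facts make the scheme iterable.

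\textbf{Step 2: extraction of one bubble.} Suppose $\|v_k\|\not\to0$ along a Palais--Smale sequence $v_k$. Then the identity $\|v_k\|^2=D(v_k)+o(1)$ and \eqref{eq:HLS'} give the lower bound
\[
\|v_k\|^2\ge S_{HL}^{\frac{2Q-\mu}{Q+2-\mu}}+o(1).
\]
I then choose $\lambda_k,\xi_k$ through a Lévy concentration function: normalize so that $\sup_{\xi}\int_{B_1(\xi)}|\nabla_{\mathbb{H}}\tilde v_k|^2$ is a fixed small number attained at the origin, where $\tilde v_k=\mathfrak{g}_{\lambda_k,\xi_k}^{-1}v_k$. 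The key claim is that the weak limit $\tilde v$ is nonzero.

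To prove it, I would use a local $\epsilon$-regularity argument. Test the equation against $\varphi^2\tilde v_k$ with $\varphi$ a cutoff on Heisenberg balls. Then use the local Folland--Stein inequality \eqref{folland-stein}, together with HLS to control the Riesz potential $|\xi|^{-\mu}*|\tilde v_k|^{Q^*_\mu}$ in $L^{2Q/\mu}$. This shows that small local energy forces strong local convergence. That would contradict the normalization unless $\tilde v\ne0$.

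The weak limit $\tilde v$ is a nonnegative critical point of $I$, since nonnegativity passes to the weak limit. The maximum principle for $-\Delta_{\mathbb{H}}$ then makes it positive, and the classification in the Lemma identifies it as a bubble $\mathfrak{g}_{\lambda,\xi}U$. The normalization constant $\alpha(Q,\mu)$ is absorbed by multiplying by a fixed constant, which is implicit in the statement. Finally I subtract this bubble and apply Step 1.

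\textbf{Step 3: counting and interaction.} Each extraction removes exactly $S_{HL}^{\frac{2Q-\mu}{Q+2-\mu}}+o(1)$ of $\|\cdot\|^2$, by the splitting in Step 1. Since $\|u_k\|^2$ lies in $\big[(m-\tfrac12)S_{HL}^{\frac{2Q-\mu}{Q+2-\mu}},(m+\tfrac12)S_{HL}^{\frac{2Q-\mu}{Q+2-\mu}}\big]$, exactly $m$ bubbles are extracted. The remainder is then a Palais--Smale sequence with energy below the threshold, so it tends to $0$ strongly.

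For the interaction estimate, I use the orthogonality of the parameters produced by the construction. The $(i+1)$-th bubble is obtained as a nonzero weak limit after undoing the first $i$ bubbles, and each earlier bubble tends weakly to $0$ in the new frame. This forces either $\lambda_i/\lambda_j\to0$ or $\infty$, or $\lambda_i\lambda_j|\xi_i^{-1}\xi_j|^2\to\infty$, that is $\varepsilon_{ij}^{(k)}\to0$. This yields $\varepsilon^{(k)}<\delta$ for $k\ge k_0$.

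\textbf{Main obstacle.} I expect the main obstacle to be the nonvanishing in Step 2: localizing the nonlocal term. The Riesz potential is not local, so local smallness of energy must be combined with global HLS bounds on the tail. I would handle this by splitting $|\eta^{-1}\xi|^{-\mu}$ into near and far parts, and using the boundedness of $|\xi|^{-\mu}*|u|^{Q^*_\mu}$ in $L^{2Q/\mu}$ for the far part.
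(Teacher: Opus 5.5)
The paper gives no proof of its own for this statement. It is imported from \cite{ZXW25}, which, as the paper indicates, rests on the asymptotic (profile) description of bounded sequences in $S^{1,2}(\mathbb{H}^n)$ from \cite{B08}. Such a decomposition supplies, in one stroke, all profiles, pairwise orthogonal frames and a remainder that is small in $L^{Q^*}$. HLS then kills the nonlocal term of the remainder, and the Palais--Smale condition identifies the profiles and upgrades the remainder to $S^{1,2}$-convergence. You instead rebuild the decomposition by Struwe's iterative extraction, with a concentration function and $\epsilon$-regularity. That is a legitimate and more self-contained route. Its price is exactly the nonvanishing step you single out, which the profile decomposition gives for free.

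One step is wrong as written and has to be reordered. In Step 2 you assert the new weak limit is nonnegative ``since nonnegativity passes to the weak limit''. That holds only for the first extraction. From the second one on, $v_k=u_k-\sum_{i\le j}\mathfrak{g}^{(k)}_i w_i$ changes sign. Without nonnegativity you can use neither the classification nor the quantization $\|w\|^2=S_{HL}^{\frac{2Q-\mu}{Q+2-\mu}}$ that drives the counting in Step 3, since sign-changing critical points are not classified.

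The fix is to prove orthogonality of the new frame before classifying. Your Step 3 argument only uses that the new weak limit is nonzero, not that it is a bubble. Suppose $(\mathfrak{g}^{(k)}_{j+1})^{-1}\mathfrak{g}^{(k)}_i$ did not tend weakly to $0$. Then along a subsequence it would converge in $\mathcal{G}$, and since $(\mathfrak{g}^{(k)}_i)^{-1}v_k\rightharpoonup0$ (by construction and inductive orthogonality), you would get $(\mathfrak{g}^{(k)}_{j+1})^{-1}v_k\rightharpoonup0$, a contradiction. Once $(\mathfrak{g}^{(k)}_{j+1})^{-1}\mathfrak{g}^{(k)}_i\rightharpoonup0$ for all $i\le j$, the new profile equals the weak limit of the nonnegative functions $(\mathfrak{g}^{(k)}_{j+1})^{-1}u_k$. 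Only then do the maximum principle and the classification apply.

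Two smaller points.

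\emph{The case $\mu>4$.} Here $Q^*_\mu<2$, so the potential $(|\cdot|^{-\mu}*|v|^{Q^*_\mu})|v|^{Q^*_\mu-2}$ is singular on $\{v=0\}$ and no longer controlled in $L^{Q/2}$. The local term therefore cannot be absorbed linearly into $\|\nabla_{\mathbb{H}}(\varphi v_k)\|_{L^2}^2$. Your $\epsilon$-regularity must instead use that the near part is bounded by $\|v_k\|_{L^{Q^*}(B_2)}^{2Q^*_\mu}$ with $2Q^*_\mu>2$, combined with a covering comparing balls of two radii.

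\emph{Subsequences.} The iteration gives the conclusion only along subsequences, while the statement is for the whole sequence. A routine subsequence-of-subsequence argument applied to $\inf_{\lambda_i,\xi_i}\|u_k-\sum_i\mathfrak{g}_{\lambda_i,\xi_i}U\|$ recovers the full sequence.
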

Based on this result, using the non-degeneracy of bubbles, Zhang, Xu and Wang \cite{ZXW25} established the quantitative
stability of critical points for equation \eqref{limit3} in the single-bubble case when $Q\geq4$ and $\mu\in (0,Q)$, while the multi-bubble case when $Q=4$ and $\mu\in (0,2)$. More precisely, they proved that,
for a function $u\in S^{1,2}(\mathbb{H}^{n})$ that almost solves \eqref{limit3}, that is, $u$ closes to
the sum of $m\geq1$ weakly interacting  bubble solutions,
the $S^{1,2}(\mathbb{H}^{n})$-distance from $u$ to the manifold of sums of $m$ bubbles,
defined as
\begin{equation*}
\delta(u)=\left\|u-\sum\limits_{i=1}^m\mathfrak{g}_{\lambda_i ,\xi_i } U\right\|_{S^{1,2}(\mathbb{H}^{n})},
\end{equation*}
 can be linearly bounded by
\begin{equation*} \Upsilon(u)=\left\|\Delta_{\mathbb{H}}u+\left(\int_{\mathbb{H}^{n}}\frac{|u(\eta)|^{Q^{\ast}_{\mu}}}{|\eta^{-1}\xi|^{\mu}}{d}\eta\right)|u|^{Q^{\ast}_{\mu}-2}u\right\|_{(S^{1,2}(\mathbb{H}^{n}))^{-1}}.
	\end{equation*}
In the last part, using the method developed in \cite{CFL25}, we prove that for $Q=4$ and $\mu\in (2,4)$, the above quantitative stability estimate still holds.
\begin{theorem}\label{main thm}
Let $Q=4$, $\mu\in(2,4)$ and $m\geq2$ be positive integers.
There exists a constant $\tilde{\delta}=\tilde{\delta}(Q, m)>0$ such that for any $\delta \in (0, \tilde{\delta})$, the following holds:
   if $u\in S^{1,2}(\mathbb{H}^{n})$ satisfies
   \begin{equation*}
     \left\|u-\mathop{\sum}\limits_{i=1}^{m}\mathfrak{g}_{\tilde{\lambda}_i,\tilde{\xi}_i}U
\right\|_{S^{1,2}(\mathbb{H}^{n})}<\delta
   \end{equation*}
   for a family of $\delta$-weakly interacting bubbles $\big\{\mathfrak{g}_{\tilde{\lambda}_i , \tilde{\xi }_i} U\big\}_{i=1}^{m}$,
then there exist a family of bubbles $\big\{\mathfrak{g}_{\lambda_i ,\xi_i } U\big\}_{i=1}^m $ such that
\begin{equation}\label{main eq}
\delta(u) \leq C \Upsilon(u).
\end{equation}
Furthermore, $\varepsilon \leq C \Upsilon(u)$,
and estimate \eqref{main eq} is sharp in the sense that the power of $\Upsilon(u)$
 cannot be substituted with a larger (or smaller) one.
\end{theorem}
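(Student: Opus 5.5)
We follow the Figalli--Glaudo / Chen--Fan--Liao scheme, adapted to the nonlocal nonlinearity. Write $\sigma=\sum_{i=1}^m \mathfrak{g}_{\lambda_i,\xi_i}U=:\sum_i U_i$ and $\rho=u-\sigma$.

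\textbf{Step 1 (choice of bubbles and orthogonality).} Starting from the $\delta$-close family $\{\mathfrak{g}_{\tilde\lambda_i,\tilde\xi_i}U\}$, minimize $\|u-\sum_i \mathfrak{g}_{\lambda_i,\xi_i}U\|_{S^{1,2}}$ over parameters near $(\tilde\lambda_i,\tilde\xi_i)$. The minimizer exists for $\delta$ small, remains $\delta'$-weakly interacting, and gives $\|\rho\|_{S^{1,2}}\lesssim\delta$. Moreover $\rho$ is $S^{1,2}$-orthogonal to $U_i$ and to the tangent functions $Z_i^a=\mathfrak{g}_{\lambda_i,\xi_i}Z^a$, $a=1,\dots,2n+2$. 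Since $-\Delta_{\mathbb H}Z_i^a$ is given by the linearized operator of Lemma \ref{nondege} at $U_i$, this orthogonality reads $\int \rho\,\mathcal{L}_i Z_i^a=0$, where $\mathcal L_i$ denotes the right-hand side operator.

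\textbf{Step 2 (expansion of the equation).} Put $f=\Delta_{\mathbb H}u+\big(|\xi|^{-\mu}*|u|^{Q_\mu^*}\big)|u|^{Q_\mu^*-2}u$ (the normalizing constant $\alpha$ can be absorbed by scaling). We write
\[
-\Delta_{\mathbb H}\rho-\sum_i\Big[Q_\mu^*\big(|\xi|^{-\mu}*(U_i^{Q_\mu^*-1}\rho)\big)U_i^{Q_\mu^*-1}+(Q_\mu^*-1)\big(|\xi|^{-\mu}*U_i^{Q_\mu^*}\big)U_i^{Q_\mu^*-2}\rho\Big]=f+I_1+N(\rho).
\]
Here the interaction term is
\[
I_1=\big(|\xi|^{-\mu}*\sigma^{Q_\mu^*}\big)\sigma^{Q_\mu^*-1}-\sum_i\big(|\xi|^{-\mu}*U_i^{Q_\mu^*}\big)U_i^{Q_\mu^*-1},
\]
and $N(\rho)$ collects the quadratic remainder together with the cross linear terms (linear in $\rho$, mixing bubbles $i\neq j$). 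For $Q=4$ we have $Q_\mu^*=(8-\mu)/2\in(2,3)$ when $\mu\in(2,4)$, so $Q_\mu^*-2\in(0,1)$. Pointwise inequalities $\big||a+b|^{p}-a^p-pa^{p-1}b\big|\lesssim a^{p-2}b^2+|b|^p$ with $p=Q_\mu^*$ and $p=Q_\mu^*-1$, combined with the HLS inequality \eqref{eq:HLSH} and the Folland--Stein inequality \eqref{folland-stein}, give $\|N(\rho)\|_{(S^{1,2})^{-1}}=o(\|\rho\|)$ apart from cross terms, which are controlled by integrals $\int U_i^{a}U_j^{b}$.

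\textbf{Step 3 (coercivity and estimate of $\rho$).} Test the equation against $\rho$. Using the orthogonality from Step 1, the non-degeneracy in Lemma \ref{nondege}, and a localization argument (bump functions adapted to each bubble's region, as in Figalli--Glaudo), we prove the spectral gap
\[
\sum_i\int\mathcal L_i\rho\cdot\rho\le\tilde c\|\rho\|^2,\qquad \tilde c<1,
\]
for weakly interacting families. The localization must handle the nonlocal convolution, whose tail is estimated via HLS. This gives $\|\rho\|\lesssim\|f\|_{(S^{1,2})^{-1}}+\|I_1\|_{(S^{1,2})^{-1}}$; more precisely, by orthogonality only the part of $I_1$ seen by $\rho$ contributes. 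In $Q=4$ the interaction satisfies $\int U_i^{Q^*-1}U_j\approx\varepsilon_{ij}$, so a crude bound gives $\|I_1\|\lesssim\varepsilon$, and this must then be improved.

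\textbf{Step 4 (bounding $\varepsilon$; the main obstacle).} Test the equation against $Z_k^a$, in particular the dilation generator at the most concentrated bubble. The left side vanishes up to $O(\|\rho\|^2+\varepsilon\|\rho\|)$, while $\int I_1 Z_k^a$ has the leading term $c\,\varepsilon_{kj}+o(\varepsilon)$ with $c\neq0$. This leading term comes from both the local factor $U_k^{Q_\mu^*-2}U_j$ and the nonlocal factor $|\xi|^{-\mu}*(U_k^{Q_\mu^*-1}U_j)$. Hence $\varepsilon\lesssim\|f\|+\|\rho\|^2+\varepsilon\|\rho\|$. Combined with Step 3 this yields $\|\rho\|+\varepsilon\lesssim\Upsilon(u)$, which is \eqref{main eq}.

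The delicate points are the explicit asymptotics of the nonlocal interaction integrals for $\mu\in(2,4)$, including the nonvanishing of the constant $c$, and the sublinearity issues caused by $Q_\mu^*-2<1$. For the latter we would follow the Chen--Fan--Liao approach: compute $\int I_1Z_k^a$ on regions where one bubble dominates, and use Heisenberg-group estimates for $\int U_i^\alpha U_j^\beta$.

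\textbf{Sharpness.} Take $u=U_1+U_2$ with $\varepsilon_{12}\to0$. Then $\delta(u)\approx\varepsilon$, because the best approximation is governed by the interaction, and $\Upsilon(u)=\|I_1\|\approx\varepsilon$. So neither a larger nor a smaller power of $\Upsilon(u)$ is possible.
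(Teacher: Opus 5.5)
\textbf{Step 3 fails as written.} Your overall scheme matches the paper's: minimize over the bubble parameters, get a linear estimate for $\rho$, test with $Z_k^{2n+2}$ to control $\varepsilon$, then close.

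The problem is orthogonality. Minimizing over $(\lambda_i,\xi_i)$ only yields $\langle\rho,Z_i^a\rangle_{S^{1,2}(\mathbb{H}^n)}=0$ for $1\le a\le 2n+2$. It does not give $\rho\perp U_i$. Already for one bubble, $u=(1+\epsilon)U$ is minimized by $U$ itself and $\rho=\epsilon U$; for $m\ge2$, take $(1+\epsilon)\sum_iU_i$ with negligible interaction.

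Without this orthogonality your spectral gap is false, because $U_i$ is a negative direction. Indeed $\int\mathcal L_iU_i\cdot U_i=(2Q^*_\mu-1)\|U_i\|^2_{S^{1,2}(\mathbb{H}^n)}$, equivalently $\Lambda_1=1<Q^*_\mu$ in Lemma \ref{eigen}. For the $\rho$ above, testing the equation with $\rho$ gives a quadratic form $\approx(2-2Q^*_\mu)\|\rho\|^2<0$, hence no bound on $\|\rho\|$.

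You have two ways out:
\begin{itemize}
\item Add amplitudes $\alpha_i$ to the minimization, and then prove separately that $|\alpha_i-1|\lesssim\Upsilon(u)$, since $\delta(u)$ is measured with coefficients $1$.
\item Argue as the paper does in Lemma \ref{main5}: prove invertibility instead of coercivity, namely $\|L(\rho)\|_{(S^{1,2}(\mathbb{H}^n))^{-1}}\gtrsim\|\rho\|_{S^{1,2}(\mathbb{H}^n)}$ for $\rho\perp\{Z_i^a\}$, by contradiction. With $\|\rho_k\|=1$ and $L(\rho_k)\to0$, each pull-back $(\mathfrak g_i^{(k)})^{-1}\rho_k$ converges weakly to a solution of the linearized equation at $U$ orthogonal to all $Z^a$. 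By Lemma \ref{nondege} the limit is $0$. Then every weighted double integral in $\langle L(\rho_k),\rho_k\rangle$ tends to $0$, contradicting $\|\rho_k\|=1$.
\end{itemize}
The second route needs no sign information: the linearized operator at $U$ sends $U$ to $(2-2Q^*_\mu)(-\Delta_{\mathbb{H}}U)$, a nonzero multiple. It also needs no localization of the convolution. $N(\rho)$ is then absorbed through Lemma \ref{main1}, since $Q^*_\mu-1>1$.

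\textbf{Step 4 and the sharpness claim also need repair.}

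(i) The nonvanishing of the leading coefficient, which you leave open, requires no nonlocal asymptotics. The two leading terms in Lemma \ref{main3} add up to $\int U_i\,(-\Delta_{\mathbb{H}}Z_k^{2n+2})$. Since $U$ solves both \eqref{limit1} and \eqref{limit3}, $Z_k^{2n+2}$ also solves the \emph{local} linearized equation. Hence this integral equals $(Q^*-1)\int U_k^{Q^*-2}U_iZ_k^{2n+2}$, which is $\approx\varepsilon_{ik}$ when $\lambda_i\le\lambda_k$ by \eqref{main01}; this is \eqref{main37}.

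(ii) Testing only at the most concentrated bubble controls $\max_j\varepsilon_{1j}$, not $\varepsilon$. You need the induction over $\lambda_1\ge\cdots\ge\lambda_m$ from the proof of Lemma \ref{main4}. There, the terms of indefinite sign involve more concentrated bubbles, which are already controlled.

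(iii) Your expansion ``$c\,\varepsilon_{kj}+o(\varepsilon)$'' for $\int I_1Z_k^{2n+2}$ (the paper's $\int fZ_k^{2n+2}$) is exactly where $\mu>2$ enters, and your sketch never uses it. The expansion contains contributions of size $O(\varepsilon^{\mu/2})$, for example $II_1$ in \eqref{main32}, estimated in \eqref{main35}. These are $o(\varepsilon)$ only because $\mu>Q-2=2$.

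(iv) Your sharpness example is wrong. For $u=U_1+U_2$ one has $\delta(u)=0$, so the example says nothing about the power in \eqref{main eq}; it only shows that $\varepsilon\le C\Upsilon(u)$ is sharp. The correct test is $u=(1+\epsilon)U$ as in the paper, or $(1+\epsilon)\sum_iU_i$ with interaction $\ll\epsilon$ when $m\ge2$. For these, $\delta(u)\approx\epsilon\approx\Upsilon(u)$.
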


\begin{remark}
{\rm In Theorem \ref{main thm}, we actually assume that $Q-2<\mu<\min\{Q,4\}$,
there are several reasons for this:

(i) To guarantee $Q_\mu^*\geq2$, we need $\mu\leq4$.

(ii)  The condition $\mu >Q-2$ is necessary to obtain a good approximation for $\|\rho\|_{S^{1,2}(\mathbb{H}^{n})}$
  and   $\|h\|_{(S^{1,2}(\mathbb{H}^{n}))^{-1}}$.
  More precisely, in
 \eqref{main35} and \eqref{main42}, we need $\mu >Q-2$ to obtain  $O(\varepsilon^{\frac{\mu}{2}})=o\big(\varepsilon^{\frac{Q-2}{2}}\big)$.


 Combining (i) with (ii), we obtain $Q=4$ and $\mu\in(2,4)$.

Besides, $\mu<4$ is crucial to prove the invertibility for the operator $L$, we can see this in
\eqref{main52} and \eqref{main53}.
In addition,
to prove Theorem \ref{main thm}, in \eqref{maineq2}, we require that $\|N(\rho)\|_{(S^{1,2}(\mathbb{H}^{n}))^{-1}}$ can be controlled by $\vartheta\|\rho\|_{S^{1,2}(\mathbb{H}^{n})}$ for some $\vartheta\in (0,1)$,
this
 is true when $\mu<4$ by
  Lemma \ref{main1}. 
  }
\end{remark}

\begin{remark}
{\rm By considering a simple perturbation $u_\epsilon=(1+\epsilon)U$ for some small $\epsilon>0$, using \eqref{limit1} and \eqref{limit3}, we can find that the estimate \eqref{main eq} is sharp.}
\end{remark}



As a direct consequence of Theorems \ref{compactness thm}-\ref{main thm}, we have the following quantitative stability of global compactness for non-negative functions on $\mathbb{H}^n$.
\begin{corollary}\label{cor1-1}
Let $Q=4$, $\mu\in(2,4)$ and $m\geq2$ be positive integers. For any nonnegative function $u\in S^{1,2}(\mathbb{H}^{n})$ such that
\begin{equation*}
	\Big(m-\frac{1}{2}\Big)S_{HL}^{\frac{2Q-\mu}{Q+2-\mu}}\leq \|u\|_{S^{1,2}(\mathbb{H}^{n})}^{2}\leq \Big(m+\frac{1}{2}\Big)S_{HL}^{\frac{2Q-\mu}{Q+2-\mu}},
\end{equation*}
   there exist a family of bubbles $\big\{\mathfrak{g}_{\lambda_i , \xi_i } U\big\}_{i=1}^{m}$ such that \eqref{main eq} holds.
\end{corollary}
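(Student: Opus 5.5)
The statement to prove is Corollary \ref{cor1-1}. It should follow by combining Theorem \ref{compactness thm} with Theorem \ref{main thm}, plus a compactness-type contradiction argument to handle $u$ that are not close to a sum of bubbles. Fix $Q=4$, $\mu\in(2,4)$, $m\geq2$, and let $\tilde{\delta}=\tilde{\delta}(Q,m)$ be the constant from Theorem \ref{main thm}. Fix any $\delta\in(0,\tilde{\delta})$. The plan is to split into two regimes according to the size of $\Upsilon(u)$.

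\textbf{Regime $\Upsilon(u)\le\eta_0$.} We claim there is $\eta_0=\eta_0(\delta)>0$ such that every nonnegative $u$ in the energy window with $\Upsilon(u)\le\eta_0$ satisfies
\begin{equation*}
\Big\|u-\sum_{i=1}^m\mathfrak{g}_{\tilde{\lambda}_i,\tilde{\xi}_i}U\Big\|_{S^{1,2}(\mathbb{H}^{n})}<\delta
\end{equation*}
for some family of $\delta$-weakly interacting bubbles. Suppose not. Then there is a sequence $u_k\ge0$ in the energy window with $\Upsilon(u_k)\to0$, none of which is $\delta$-close to a $\delta$-weakly interacting sum of $m$ bubbles. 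Theorem \ref{compactness thm} gives parameters $\lambda_i^{(k)},\xi_i^{(k)}$ such that $u_k-\sum_{i=1}^m\mathfrak{g}_{\lambda_i^{(k)},\xi_i^{(k)}}U\to0$ in $S^{1,2}(\mathbb{H}^{n})$. The same theorem gives $\varepsilon^{(k)}<\delta$ for $k\ge k_0$. This contradicts the choice of $u_k$ for large $k$. With the claim in hand, Theorem \ref{main thm} applies directly and yields bubbles $\mathfrak{g}_{\lambda_i,\xi_i}U$ with $\delta(u)\le C\Upsilon(u)$.

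\textbf{Regime $\Upsilon(u)>\eta_0$.} Here the estimate is trivial, at the cost of enlarging $C$. Pick any bubbles, for instance all equal to $U$. Then
\begin{equation*}
\delta(u)\le\|u\|_{S^{1,2}(\mathbb{H}^{n})}+m\|U\|_{S^{1,2}(\mathbb{H}^{n})}\le C_1,
\end{equation*}
because the energy window bounds $\|u\|_{S^{1,2}(\mathbb{H}^{n})}$ uniformly. Hence $\delta(u)\le C_1\eta_0^{-1}\Upsilon(u)$. Taking the maximum of the two constants gives \eqref{main eq} for every admissible $u$.

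The main obstacle is a normalization mismatch. Theorem \ref{compactness thm} is stated for the equation without the constant $\alpha(Q,\mu)$, with bubbles $\mathfrak{g}U$ adapted to that normalization. One has to check that it, $\Upsilon$, and Theorem \ref{main thm} all refer to the same equation and the same bubble normalization, rescaling $U$ by a fixed constant if necessary. One must also confirm that the energy level $S_{HL}^{\frac{2Q-\mu}{Q+2-\mu}}$ is that of one bubble in this normalization. Everything else is a routine contradiction argument.
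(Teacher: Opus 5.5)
Your argument is correct and is exactly what the paper leaves implicit when it calls the corollary a direct consequence of Theorems \ref{compactness thm} and \ref{main thm}. For small $\Upsilon(u)$ you use a contradiction argument through global compactness to place $u$ in the hypothesis of Theorem \ref{main thm}; otherwise the trivial bound $\delta(u)\le C_1\le C_1\eta_0^{-1}\Upsilon(u)$ suffices. Your normalization concern resolves as you suggest: the $\alpha$-free equation used in Theorem \ref{compactness thm} and in Section \ref{sec5} is solved by $\alpha(Q,\mu)^{1/(2Q^*_\mu-2)}U$, whose energy is $S_{HL}^{Q^*_\mu/(Q^*_\mu-1)}=S_{HL}^{\frac{2Q-\mu}{Q+2-\mu}}$, which matches the energy window.
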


The paper is organized as follows. In Section
\ref{sec2}, we introduce some preliminary results. In Section \ref{sec3}, we prove Theorem \ref{main thm0}. Based on this result, we then complete the
proof of Theorem \ref{main thm1} in Section \ref{sec4}. Finally, in Section \ref{sec5}, we establish the quantitative stability estimate  of critical points for equation \eqref{limit3}, that is Theorem \ref{main thm}. Throughout the paper, the symbol $C$ denotes a positive constant possibly varies from line to line. The notation $X\lesssim Y$ ($X\gtrsim Y$) means that there exists a positive constant $C$ such that $X\leq CY$ ($X\geq CY$). We say $X \approx Y$ if both
$X\lesssim Y$ and $X\gtrsim Y$ hold.



\section{Preliminaries}\label{sec2}
For $\eta \in \mathbb{H}^n$ , let  $\eta^{(a)}$ denote its $a$-th coordinate.
we define
  \begin{equation}\label{Z1}
  Z^{a}=\frac{\partial \mathfrak{g}_{1, \eta}U }{\partial\eta^{(a)}}\bigg|_{ \eta=0},~~ a=1,\ldots, 2n+1.
  \end{equation}
  In addition, we define
\begin{equation}\label{Z2}
\begin{aligned}
Z^{2n+2}= \frac{\partial \mathfrak{g}_{r, 0}U }{\partial r}\bigg|_{r=1}=
\frac{Q-2}{2}U-(Q-2)\frac{|z|^2(1+|z|^2)+t^2}{(1+|z|^2)^2+t^2} U.
\end{aligned}
\end{equation}
  Then, the following estimate holds:
\begin{equation*}
|Z^{a}|\lesssim U,\quad \forall \, 1\leq a\leq  2n+2.
\end{equation*}

Let us consider the following eigenvalue problem for the linear perturbation operator:
\begin{align}\label{eig}
	&-\Delta_{\mathbb{H}} v+\left(\int_{\mathbb{H}^{n}}\frac{|U(\eta)|^{Q^{\ast}_{\mu}}}{|\eta^{-1}\xi|^{\mu}}{d}\eta\right)
|U|^{Q^{\ast}_{\mu}-2}v \nonumber\\
=&\Lambda\left[\left(\int_{\mathbb{H}^{n}}\frac{|U(\eta)|^{Q^{\ast}_{\mu}-1}v(\eta)}
{|\eta^{-1}\xi|^{\mu}}{d}\eta\right)|U|^{Q^{\ast}_{\mu}-2}U+
\left(\int_{\mathbb{H}^{n}}\frac{|U(\eta)|^{Q^{\ast}_{\mu}}}{|\eta^{-1}\xi|^{\mu}}{d}\eta\right)
|U|^{Q^{\ast}_{\mu}-2}v\right].
\end{align}
Then
following the work of Zhang, Xu and Wang \cite{ZXW25}, we can introduce the definition of eigenvalues of problem \eqref{eig} as the following.
\begin{definition}\label{defi}
{\rm The first eigenvalue of problem \eqref{eig} can be defined as
\begin{equation*}
  \Lambda_1=\inf \limits_{v\in S^{1,2}(\mathbb{H}^{n}) \backslash \{0\}}\frac{ \int_{\mathbb{H}^{n}}|\nabla_{\mathbb{H}} u|^{2}{d}\xi+ \int_{\mathbb{H}^{n}}\int_{\mathbb{H}^{n}}\frac{U^{Q^{\ast}_{\mu}}(\xi)U
^{Q^{\ast}_{\mu}-2}(\eta)v^2(\eta)}{|\eta^{-1}\xi|^{\mu}}{d}\xi{d}\eta}{ \int_{\mathbb{H}^{n}}\int_{\mathbb{H}^{n}}\frac{U^{Q^{\ast}_{\mu}-1}(\xi)v(\xi)U
^{Q^{\ast}_{\mu}-1}(\eta)v(\eta)}{|\eta^{-1}\xi|^{\mu}}{d}\xi{d}\eta+ \int_{\mathbb{H}^{n}}\int_{\mathbb{H}^{n}}\frac{U^{Q^{\ast}_{\mu}}(\xi)U
^{Q^{\ast}_{\mu}-2}(\eta)v^2(\eta)}{|\eta^{-1}\xi|^{\mu}}{d}\xi{d}\eta}.
\end{equation*}
Moreover, for any $l\in \mathbb{N}^+$, the $(l+1)$th-eigenvalues can be characterized as follows:
 \begin{equation*}
  \Lambda_{l+1}=\inf \limits_{v\in \mathbb{P}_{l+1} \backslash \{0\}}\frac{ \int_{\mathbb{H}^{n}}|\nabla_{\mathbb{H}} u|^{2}{d}\xi+ \int_{\mathbb{H}^{n}}\int_{\mathbb{H}^{n}}\frac{U^{Q^{\ast}_{\mu}}(\xi)U
^{Q^{\ast}_{\mu}-2}(\eta)v^2(\eta)}{|\eta^{-1}\xi|^{\mu}}{d}\xi{d}\eta}{ \int_{\mathbb{H}^{n}}\int_{\mathbb{H}^{n}}\frac{U^{Q^{\ast}_{\mu}-1}(\xi)v(\xi)U
^{Q^{\ast}_{\mu}-1}(\eta)v(\eta)}{|\eta^{-1}\xi|^{\mu}}{d}\xi{d}\eta+ \int_{\mathbb{H}^{n}}\int_{\mathbb{H}^{n}}\frac{U^{Q^{\ast}_{\mu}}(\xi)U
^{Q^{\ast}_{\mu}-2}(\eta)v^2(\eta)}{|\eta^{-1}\xi|^{\mu}}{d}\xi{d}\eta},
\end{equation*}
where
\begin{equation*}
  \mathbb{P}_{l+1}=\bigg\{v\in S^{1,2}(\mathbb{H}^{n}): \int_{\mathbb{H}^{n}}\nabla_{\mathbb{H}}v\cdot\nabla_{\mathbb{H}}e_j{d}\xi=0,\qquad \mathrm{for ~ all}~ j=1,\ldots,l\bigg\},
\end{equation*}
and $e_j$ is the corresponding eigenfunction of $\Lambda_j$.
}
\end{definition}
Then, by utilizing the non-degeneracy of bubbles, we have
\begin{lemma}\label{eigen}\cite[Lemma 2.3]{ZXW25}
Let $\Lambda_j$, $j=1,2,\ldots$, denote the eigenvalues of problem \eqref{eig} in the increasing order as in Definition \ref{defi}. Then
\begin{align*}
  \Lambda_1=&1, \quad \ \  e_1=\mathrm{Span}\{U\},\\
  \Lambda_2=&Q_\mu^*, \quad e_2=\mathrm{Span}\big\{Z^1,\ldots,Z^{2n+2}\big\}.
\end{align*}
Furthermore, $\Lambda_{j\geq3}>Q_\mu^*$.
\end{lemma}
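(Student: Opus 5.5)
The plan is to identify the eigenvalues through the Rayleigh quotient in Definition \ref{defi}, using three ingredients: the equation satisfied by $U$, the second variation of the sharp inequality \eqref{eq:HLS'} at its extremal $U$, and the non-degeneracy in Lemma \ref{nondege}. Throughout, $U$ is normalized so that $-\Delta_{\mathbb{H}}U=\big(\int_{\mathbb{H}^n}U^{Q^*_\mu}(\eta)|\eta^{-1}\xi|^{-\mu}d\eta\big)U^{Q^*_\mu-1}$, consistent with \eqref{eig}. Write $p=Q^*_\mu$ and
\begin{equation*}
A(v)=\int_{\mathbb{H}^{n}}\int_{\mathbb{H}^{n}}\frac{U^{p-1}(\xi)v(\xi)U^{p-1}(\eta)v(\eta)}{|\eta^{-1}\xi|^{\mu}}d\xi d\eta,\qquad
B(v)=\int_{\mathbb{H}^{n}}\int_{\mathbb{H}^{n}}\frac{U^{p}(\xi)U^{p-2}(\eta)v^2(\eta)}{|\eta^{-1}\xi|^{\mu}}d\xi d\eta .
\end{equation*}
With this notation the Rayleigh quotient is $R(v)=\big(\|\nabla_{\mathbb{H}}v\|_2^2+B(v)\big)/\big(A(v)+B(v)\big)$.

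\textbf{Step 0 (the eigenvalues are attained).} I first check that the infima in Definition \ref{defi} are attained and that the spectrum is discrete. Both $A$ and $B$ are weakly continuous on $S^{1,2}(\mathbb{H}^n)$. For $A$, this follows from the HLS inequality \eqref{eq:HLSH} together with the compactness of $v\mapsto U^{p-1}v$ from $S^{1,2}$ into $L^{2Q/(2Q-\mu)}$. That compactness comes from the decay $U\approx(1+|\xi|)^{-(Q-2)}$ and local Rellich compactness for Folland--Stein spaces. The argument for $B$ is the same, since $\int U^p(\eta)|\eta^{-1}\xi|^{-\mu}d\eta\lesssim(1+|\xi|)^{-\mu}$. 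The eigenvalue problem is then that of a compact self-adjoint operator relative to the form $\|\nabla_{\mathbb{H}}v\|_2^2+B(v)$, so the min-max values are attained, eigenfunctions for distinct eigenvalues are orthogonal, and each eigenspace is finite-dimensional. I expect this compactness bookkeeping to be the main technical obstacle. I would handle it by splitting into $B_R$ and $B_R^c$ and using the tail smallness of the weights.

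\textbf{Step 1 ($\Lambda_1=1$).} Plugging $v=U$ gives $\|\nabla_{\mathbb{H}}U\|_2^2=A(U)=B(U)$, so $R(U)=1$. For minimality, I use that a minimizer $e_1$ can be replaced by $|e_1|$: the gradient term is unchanged, $B$ is unchanged, and $A$ does not decrease. By the strong maximum principle for the resulting equation, $e_1$ therefore has a sign. Two positive eigenfunctions cannot be orthogonal in the $S^{1,2}$ inner product, because $\langle e_1,U\rangle=\Lambda_1(A(e_1,U)+B(e_1,U))>0$. Since $U$ is a positive eigenfunction, $\Lambda_1=1$ and $e_1=\mathrm{Span}\{U\}$; simplicity follows by the same sign argument.

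\textbf{Step 2 ($\Lambda_2=p$ and its eigenspace).} Differentiating the family $\mathfrak{g}_{\lambda,\eta}U$ of solutions shows that each $Z^a$ solves the linearized equation in Lemma \ref{nondege}. Adding $B$-type terms $\big(\int U^p|\eta^{-1}\xi|^{-\mu}\big)U^{p-2}Z^a$ to both sides turns this exactly into \eqref{eig} with $\Lambda=p$. So $\Lambda_2\le p$.

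For the reverse inequality I use that $U$ minimizes $J(u)=\|\nabla_{\mathbb{H}}u\|_2^2\big/D(u)^{1/p}$ by \eqref{eq:HLS'}, where $D$ is the double integral in \eqref{eq:HLS'}. Hence $\frac{d^2}{d\epsilon^2}J(U+\epsilon v)|_{\epsilon=0}\ge0$. For $v\in\mathbb{P}_2$, that is $\langle v,U\rangle_{S^{1,2}}=0$, the first-order cross terms vanish, and after using the normalization this second-variation inequality reads
\begin{equation*}
\|\nabla_{\mathbb{H}}v\|_2^2\ge pA(v)+(p-1)B(v).
\end{equation*}
This is equivalent to $R(v)\ge p$, so $\Lambda_2\ge p$. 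Here $p\ge2$ (from $\mu\le4$) keeps $|U+\epsilon v|^p$ twice differentiable.

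\textbf{Step 3 ($\Lambda_3>p$).} Every eigenfunction with $\Lambda=p$ solves the linearized equation of Lemma \ref{nondege}, so by that lemma it lies in $\mathrm{Span}\{Z^1,\dots,Z^{2n+2}\}$. Therefore $e_2=\mathrm{Span}\{Z^a\}$. Since the eigenspace for $p$ is exhausted and the spectrum is discrete by Step 0, the next min-max level over the orthogonal complement is attained and differs from $p$, which gives $\Lambda_j>p$ for $j\ge3$.
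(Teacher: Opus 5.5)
The paper gives no proof of this lemma; it is imported from \cite{ZXW25}, so there is no argument to compare against. Your proof is correct. Its ingredients are compactness of the weighted forms, a sign argument for $\Lambda_1$, the second variation of \eqref{eq:HLS'} at $U$ for $\Lambda_2\ge Q^{\ast}_{\mu}$, the $Z^a$ for the reverse bound, and Lemma \ref{nondege} for the eigenspace and the gap.

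Your normalization of $U$ is essential, not cosmetic. The paper's Jerison--Lee $U$ solves \eqref{limit3} with $\alpha(Q,\mu)<1$ (Lemma \ref{tech}, for $\mu\le4$), and for that $U$ the quotient gives $R(U)=\frac{1+\alpha(Q,\mu)}{2}$. So this lemma, like Lemma \ref{nondege}, must be read for $\alpha(Q,\mu)^{\frac{1}{2(Q^{\ast}_{\mu}-1)}}U$.

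Several points should be made explicit. Write $p=Q^{\ast}_{\mu}$ and $a(v,w)=\langle v,w\rangle_{S^{1,2}(\mathbb{H}^n)}+B(v,w)$.

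\textbf{Which orthogonality.} Definition \ref{defi} imposes orthogonality in $\langle\cdot,\cdot\rangle_{S^{1,2}(\mathbb{H}^n)}$. The spectral theorem in your Step 0 produces $a$-orthogonality, and your Step 3 argument concerns the $a$-orthogonal complement of $e_1\oplus e_2$. It is the $S^{1,2}$ version that Lemma \ref{local} later uses, with $w_k\perp T\mathfrak{M}$. The two coincide:
\begin{itemize}
\item $\langle v,U\rangle_{S^{1,2}(\mathbb{H}^n)}=A(v,U)=B(v,U)$.
\item The identity $\int_{\mathbb{H}^n}U^{p}(\eta)|\eta^{-1}\xi|^{-\mu}d\eta=c\,U^{Q^{\ast}-p}(\xi)$ holds along the whole bubble family with the same $c>0$. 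Differentiating it gives $A(v,Z^a)=\frac{Q^{\ast}-p}{p}B(v,Z^a)$.
\item Hence $\langle v,Z^a\rangle_{S^{1,2}(\mathbb{H}^n)}=\frac{Q^{\ast}-1}{Q^{\ast}}\,a(v,Z^a)$ for all $v$.
\end{itemize}
The same identities correct the formula in your Step 1. There $\Lambda_1(A+B)(e_1,U)$ equals $a(e_1,U)=2\langle e_1,U\rangle_{S^{1,2}(\mathbb{H}^n)}$, not $\langle e_1,U\rangle_{S^{1,2}(\mathbb{H}^n)}$. The sign conclusion is unaffected.

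\textbf{Positivity of the denominator.} Step 0 tacitly uses $A(v)+B(v)\ge0$; otherwise the quotient in Definition \ref{defi} is unbounded below. Pointwise positivity of the kernel does not give this. It does follow from HLS combined with the H\"older bound
\[
\|U^{p-1}v\|_{L^{2Q/(2Q-\mu)}}\le\big\|U^{\frac{Q^{\ast}-2}{2}}v\big\|_{L^2}\,\|U\|_{L^{Q^{\ast}}}^{\frac{Q-\mu}{Q-2}}.
\]
This chain is an equality at $v=U$, so it yields $|A(v)|\le B(v)$.

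\textbf{A shortcut for Step 1.} Follow the same HLS--H\"older chain with \eqref{folland-stein}, as for $P_{k,2}$ in the proof of Lemma \ref{local}. This gives $A(v)\le\|\nabla_{\mathbb{H}}v\|_{L^2(\mathbb{H}^n)}^2$, with equality only on $\mathrm{Span}\{U\}$. That yields $\Lambda_1=1$ and $e_1=\mathrm{Span}\{U\}$ directly, without minimizers or the maximum principle.

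\textbf{The restriction $p\ge2$.} The restriction $p\ge2$ in Step 2 is not needed. Since $U>0$, the $\epsilon^2$-expansion of the double integral also holds for $1<p<2$.
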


We define the set of all gauge transformations associated with the standard bubble as
\begin{equation*}
\mathcal{G}=\big\{\mathfrak{g}_{\lambda, \xi}: \lambda >0, \xi \in \mathbb{H}^n\big\}.
\end{equation*}
It is straightforward to verify that each \(\mathfrak{g}_{\lambda, \xi} \in \mathcal{G}\) is an isometry on both $S^{1,2}(\mathbb{H}^{n})$ and $L^{Q^*}(\mathbb{H}^{n})$.
Moreover,
\(\mathcal{G}\) forms a group under composition.
For convenience, we will occasionally denote
  \begin{equation*}
 \mathfrak{g}_i= \mathfrak{g}_{\lambda_i, \xi_i}, \quad U_i=\mathfrak{g}_{\lambda_i, \xi_i} U, \quad Z_i^a= \mathfrak{g}_{\lambda_i, \xi_i} Z^a.
\end{equation*}
By direct calculations, we obtain 
\begin{equation}\label{rela}
\mathfrak{g}_i^{-1}\mathfrak{g}_{j}=\mathfrak{g}_{\frac{\lambda_j}{\lambda_i}, \delta_{\lambda_i}(\xi_i^{-1} \xi_j)}.
\end{equation}

\begin{lemma}\cite[Lemma 9.13]{TF07}\label{gg}
For a given sequence $\{\lambda_k,\xi_k \}_{k=1}^{\infty}$, the following statements are equivalent:
  \begin{itemize}
    \item For each $u\in S^{1,2}(\mathbb{H}^{n})$, $\mathfrak{g}_k u\rightharpoonup 0$ weakly in $S^{1,2}(\mathbb{H}^{n})$.
    \item For each $u\in S^{1,2}(\mathbb{H}^{n})$, $\mathfrak{g}_k^{-1} u\rightharpoonup 0$ weakly in $S^{1,2}(\mathbb{H}^{n})$.
    \item $|\log \lambda_k| +|\xi_k| \to +\infty.$
    \item $|\log \lambda_k| +\lambda_k |\xi_k| \to +\infty.$
  \end{itemize}
   In any of these cases, we say $\mathfrak{g}_k \rightharpoonup 0$.
\end{lemma}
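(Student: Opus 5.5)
The statement is Lemma \ref{gg}. I would prove it by showing (1)$\Leftrightarrow$(2), then (3)$\Leftrightarrow$(4), then (3)$\Rightarrow$(1), and finally (1)$\Rightarrow$(3). Throughout I use that each $\mathfrak{g}_{\lambda,\xi}$ is a linear isometry of $S^{1,2}(\mathbb{H}^{n})$ onto itself, and that $\mathcal{G}$ is a group.

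\emph{(1)$\Leftrightarrow$(2).} Since $\mathfrak{g}_k$ is a surjective linear isometry of a Hilbert space, it is unitary, so $\mathfrak{g}_k^{*}=\mathfrak{g}_k^{-1}$. Hence
\begin{equation*}
\langle \mathfrak{g}_k u,v\rangle_{S^{1,2}(\mathbb{H}^{n})}=\langle u,\mathfrak{g}_k^{-1}v\rangle_{S^{1,2}(\mathbb{H}^{n})}\quad\text{for all } u,v .
\end{equation*}
Condition (1) says the left side tends to $0$ for every pair $(u,v)$, and condition (2) says the right side does, so the two are equivalent.

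\emph{(3)$\Leftrightarrow$(4).} I argue by subsequences. The negation of (3) gives a subsequence along which $\log\lambda_k$ and $|\xi_k|$ are bounded. Then $\lambda_k\in[c,C]$ with $c>0$, so $\lambda_k|\xi_k|$ is bounded as well. Conversely, if $\log\lambda_k$ and $\lambda_k|\xi_k|$ are bounded along a subsequence, then $\lambda_k$ stays away from $0$, and so $|\xi_k|$ is bounded. Each condition is therefore equivalent to: no subsequence has bounded $(\log\lambda_k,\xi_k)$.

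\emph{(3)$\Rightarrow$(1).} The operators $\mathfrak{g}_k$ are uniformly bounded, so it suffices to check $\langle\mathfrak{g}_k u,v\rangle\to0$ for $u,v\in C_c^\infty(\mathbb{H}^n)$. It is also enough to do this along a further subsequence of an arbitrary subsequence. Along such a subsequence, one of three cases occurs.
\begin{itemize}
\item Case $\lambda_k\to0$. Integrating by parts,
\begin{equation*}
|\langle\mathfrak{g}_k u,v\rangle|=\Big|\int_{\mathbb{H}^n}\mathfrak{g}_k u\,(-\Delta_{\mathbb{H}}v)\,d\xi\Big|\le \lambda_k^{\frac{Q-2}{2}}\|u\|_{L^\infty}\|\Delta_{\mathbb{H}}v\|_{L^1}\to0 .
\end{equation*}
\item Case $\lambda_k\to\infty$. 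By \eqref{rela}, $\mathfrak{g}_k^{-1}=\mathfrak{g}_{1/\lambda_k,\,\cdot}$ has scale tending to $0$. The previous case together with the identity $\langle\mathfrak{g}_k u,v\rangle=\langle u,\mathfrak{g}_k^{-1}v\rangle$ gives the claim.
\item Case $\lambda_k\in[c,C]$ and $|\xi_k|\to\infty$. The support of $\mathfrak{g}_k u$ is contained in $\tau_{\xi_k}(\delta_{1/\lambda_k}(\operatorname{supp}u))$. This set leaves every compact set, so the pairing is eventually $0$.
\end{itemize}

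\emph{(1)$\Rightarrow$(3).} I prove the contrapositive. If (3) fails, pass to a subsequence with $(\lambda_k,\xi_k)\to(\lambda,\xi)$, where $\lambda>0$. The key point is strong continuity of the map $(\lambda,\xi)\mapsto\mathfrak{g}_{\lambda,\xi}u$ into $S^{1,2}(\mathbb{H}^n)$. For $u\in C_c^\infty$ this follows from uniform convergence of $\nabla_{\mathbb{H}}(\mathfrak{g}_{\lambda_k,\xi_k}u)$ on a common compact set, using that the $X_j$ are left-invariant and homogeneous of degree one. For general $u$ it follows by density, since all the maps are isometries. Then
\begin{equation*}
\langle\mathfrak{g}_k u,\mathfrak{g}_{\lambda,\xi}u\rangle\to\|u\|^2_{S^{1,2}(\mathbb{H}^{n})}\neq0\quad\text{for } u\neq0,
\end{equation*}
which contradicts (1).

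I expect this strong-continuity step to be the main technical obstacle. Everything else is bookkeeping with subsequences and the unitarity of $\mathfrak{g}_k$.
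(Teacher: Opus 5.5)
Your proof is correct. The paper does not prove this lemma; it quotes it from \cite{TF07}, so there is no in-paper argument to compare against. What you give is a complete, self-contained proof built from three ingredients:
\begin{itemize}
\item unitarity of $\mathfrak{g}_k$ for (1)$\Leftrightarrow$(2), which is the same identity $\langle \mathfrak{g}v,w\rangle_{S^{1,2}(\mathbb{H}^{n})}=\langle v,\mathfrak{g}^{-1}w\rangle_{S^{1,2}(\mathbb{H}^{n})}$ the paper later uses in the proof of Lemma \ref{main5};
\item vanishing of the pairing on $C_c^\infty(\mathbb{H}^n)$ in the three escape regimes for (3)$\Rightarrow$(1), namely $\lambda_k\to0$ by the $L^\infty$ bound, $\lambda_k\to\infty$ via \eqref{rela}, and $|\xi_k|\to\infty$ by disjoint supports;
\item compactness of the parameters together with continuity of $(\lambda,\xi)\mapsto\mathfrak{g}_{\lambda,\xi}u$ for the converse.
\end{itemize}

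One simplification is available in (1)$\Rightarrow$(3). There you only need a single nonzero $u\in C_c^\infty(\mathbb{H}^n)$ to violate (1). So the step you flagged as the main obstacle reduces to the uniform-convergence computation for smooth compactly supported $u$, and the density extension is not needed.
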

\begin{remark}\label{iff}
As a consequence of \eqref{rela} and Lemma \ref{gg}, for sequences $\big\{\mathfrak{g}_{\lambda_1^{(k)}, \xi_1^{(k)}}\big\}_{k=1}^{\infty}$ and $\big\{\mathfrak{g}_{\lambda_2^{(k)}, \xi_2^{(k)}}\big\}_{k=1}^{\infty}$, we have
 \begin{equation*}
 \Big(\mathfrak{g}_{\lambda_1^{(k)}, \xi_1^{(k)}}\Big)^{-1}\mathfrak{g}_{\lambda_2^{(k)}, \xi_2^{(k)}} \rightharpoonup 0  ~ \mathrm{if ~ and ~ only ~ if} ~ \varepsilon_{12}^{(k)}\to 0,
\end{equation*}
where
\begin{equation*}
 \varepsilon_{12}^{(k)}= \min\left\{\frac{\lambda_1^{(k)}}{\lambda_2^{(k)}},\frac{\lambda_2^{(k)}}{\lambda_1^{(k)}},
  \frac{1}{\lambda_1^{(k)}\lambda_2^{(k)}|\big(\xi_1^{(k)}\big)^{-1} \xi_2^{(k)}|^2}\right\}.
\end{equation*}
\end{remark}

Moreover, we have the following elementary inequality.
\begin{lemma}\cite[Lemma 2.1]{LN98}\label{gs}
For any $a>0$, $b$ real, we have
\begin{align*}
 \big||a+b|^\beta-a^\beta\big|\leq C(\beta)
 \begin{cases}
             a^{\beta-1}|b|+|b|^\beta,\qquad & \mathrm{if} ~ \beta\geq1,\\
              \min\big\{a^{\beta-1}|b|,|b|^\beta\big\},\qquad & \mathrm{if} ~ 0<\beta<1,
           \end{cases}
  \end{align*}
  and
  \begin{align*}
 \big||a+b|^\beta(a+b)-a^{\beta+1}-(\beta+1)a^\beta b\big|\leq C(\beta)
 \begin{cases}
             a^{\beta-1}b^2+|b|^{\beta+1},\qquad & \mathrm{if} ~ \beta\geq1,\\
              \min\big\{a^{\beta-1}b^2,|b|^{\beta+1}\big\},\qquad & \mathrm{if} ~ 0\leq\beta<1.
           \end{cases}
  \end{align*}
\end{lemma}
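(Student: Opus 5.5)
The plan is to reduce both inequalities to one-variable statements by homogeneity, and then to split into a small-perturbation regime and a large-perturbation regime. Since $a>0$, I write $b=as$ with $s\in\mathbb{R}$ and divide the first inequality by $a^{\beta}$ and the second by $a^{\beta+1}$. Each right-hand side scales in the same way: $a^{\beta-1}|b|=a^\beta|s|$, $|b|^\beta=a^\beta|s|^\beta$, $a^{\beta-1}b^2=a^{\beta+1}s^2$, and $|b|^{\beta+1}=a^{\beta+1}|s|^{\beta+1}$. It therefore suffices to prove, with
\begin{equation*}
f(s)=|1+s|^\beta-1,\qquad g(s)=|1+s|^\beta(1+s)-1-(\beta+1)s,
\end{equation*}
that $|f(s)|\le C(|s|+|s|^\beta)$ for $\beta\ge1$ and $|f(s)|\le C\min\{|s|,|s|^\beta\}$ for $0<\beta<1$. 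The analogous bounds are needed for $g$, with $s^2$ and $|s|^{\beta+1}$ in place of $|s|$ and $|s|^\beta$.

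\emph{Small regime, $|s|\le\frac12$.} Here $1+s\in[\frac12,\frac32]$, so $f$ and $g$ are smooth on this interval, with $f(0)=0$, $g(0)=g'(0)=0$, and derivatives bounded by constants depending only on $\beta$. Taylor's formula then gives $|f(s)|\le C|s|$ and $|g(s)|\le Cs^2$. The target bounds follow in this regime:
\begin{itemize}
\item When $\beta\ge1$, $C|s|\le C(|s|+|s|^\beta)$ and $Cs^2\le C(s^2+|s|^{\beta+1})$.
\item When $\beta<1$ and $|s|\le 1$, one has $|s|\le|s|^\beta$ and $s^2\le|s|^{\beta+1}$. So the minima equal $|s|$ and $s^2$ respectively, which are exactly the bounds obtained.
\end{itemize}

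\emph{Large regime, $|s|\ge\frac12$.} I use the crude bounds $|1+s|\le3|s|$ and $1\le 2|s|$, which give $|f(s)|\le C(1+|s|^\beta)$ and $|g(s)|\le C(1+|s|+|s|^{\beta+1})$.
\begin{itemize}
\item For $\beta\ge1$: $1\le 2^\beta|s|^\beta$ yields $|f|\le C|s|^\beta$, and $1,|s|\le 4s^2$ yields $|g|\le C(s^2+|s|^{\beta+1})$.
\item For $\beta<1$ and $|s|\ge1$: the minima are $|s|^\beta$ and $|s|^{\beta+1}$, and these dominate $1$ and $|s|$. Hence $|f|\le C|s|^\beta$ and $|g|\le C|s|^{\beta+1}$.
\item For $\beta<1$ and $\frac12\le|s|\le1$: $f$ and $g$ are bounded by a constant, and every quantity $|s|,|s|^\beta,s^2,|s|^{\beta+1}$ is bounded below by $2^{-2}$. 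So the constant bound is absorbed into either side of the minimum.
\end{itemize}

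The only point requiring care is that for $\beta<1$ the function $|1+s|^\beta$ is not differentiable at $s=-1$. This is why the Taylor argument is confined to $|s|\le\frac12$, where it is harmless, and the region around $s=-1$ is handled purely by the crude bounds of the large regime. Apart from this, every step is routine, and collecting the regimes gives a constant $C(\beta)$.
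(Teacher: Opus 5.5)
Your proof is correct and complete. The scaling $b=as$ reduces both inequalities to bounds on $f(s)=|1+s|^\beta-1$ and $g(s)=|1+s|^\beta(1+s)-1-(\beta+1)s$. Taylor's formula on $|s|\le\frac12$ gives $|f|\lesssim|s|$ and $|g|\lesssim s^2$. On $|s|\ge\frac12$ the crude bounds finish the job, and for $\beta<1$ your extra split at $|s|=1$ correctly identifies which side of each minimum is active; it also keeps the singular point $s=-1$ out of the Taylor region. The argument covers the endpoint $\beta=0$ of the second inequality as well, where $g\equiv0$.

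The paper gives no proof of this lemma and simply quotes it from \cite[Lemma 2.1]{LN98}. Your homogeneity-plus-two-regimes argument is therefore a self-contained replacement for that citation, and it is the standard one.
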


\section{Proof of Theorem \ref{main thm0}}\label{sec3}
The main ingredient of the proof for Theorem \ref{main thm0} is contained in Lemma \ref{local}, where the behaviour of the sequences near $\mathfrak{M}$ is investigated.
Before this, we first establish a technical result.

\begin{lemma}\label{tech}
Let $Q\geq 4$, $0<\mu< Q$ with $\mu\leq 4$, then
\begin{equation*}
  S(Q)^{\frac{Q-\mu}{2}}C(Q,\mu)>1,
\end{equation*}
where $S(Q)$ is the sharp constant of the Folland-Stein-Sobolev inequality defined by \eqref{folland-stein} and $C(Q,\mu)$ is defined by \eqref{equH}.
\end{lemma}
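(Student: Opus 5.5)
The plan is to reduce the claim to a one-variable inequality in $\mu$. I will take logarithms and exploit the convexity of $\log\Gamma$. Put $Q=2n+2$ and
\begin{equation*}
f(\mu)=\frac{Q-\mu}{2}\log S(Q)+\log C(Q,\mu),\qquad \mu\in[0,\min\{Q,4\}],
\end{equation*}
with $\mu=Q$ excluded when $n=1$. The claim is $f(\mu)>0$ on $(0,\min\{Q,4\}]$. Inserting \eqref{equH} and the Jerison--Lee value of $S(Q)$ gives
\begin{equation*}
f(\mu)=\frac{Q-\mu}{2}\log S(Q)+\frac{\mu}{Q}\log\frac{\pi^{n+1}}{2^{n-1}n!}+\log n!+\log\Gamma\Big(\frac{Q-\mu}{2}\Big)-2\log\Gamma\Big(\frac{2Q-\mu}{4}\Big).
\end{equation*}
So $f$ is an affine function of $\mu$ plus a Gamma part $g(\mu)=\log\Gamma(\frac{Q-\mu}{2})-2\log\Gamma(\frac{2Q-\mu}{4})$.

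First I check the endpoint $\mu=0$. There $C(Q,0)=n!\,\Gamma(n+1)/\Gamma(n+1)^2=1$, so $f(0)=(n+1)\log S(Q)$. Moreover
\begin{equation*}
S(Q)^{n+1}=\frac{(4\pi n^2)^{n+1}}{n!\,4^n}=4\pi n^2\cdot\frac{(\pi n^2)^n}{n!}>1,
\end{equation*}
because $n^{2n}\ge n!$. Hence $f(0)>0$, which is consistent with the case $\mu=0$ being the classical Folland--Stein situation.

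Next, convexity. With $\psi'$ the trigamma function,
\begin{equation*}
g''(\mu)=\tfrac14\psi'\Big(\frac{Q-\mu}{2}\Big)-\tfrac18\psi'\Big(\frac{2Q-\mu}{4}\Big).
\end{equation*}
Since $\frac{Q-\mu}{2}<\frac{2Q-\mu}{4}$ and $\psi'$ is positive and decreasing on $(0,\infty)$, this gives $g''>0$. Thus $f$ is strictly convex, and
\begin{equation*}
f(\mu)\ge f(0)+\mu f'(0)\ge \min\{f(0),\,f(0)+4f'(0)\}\quad\text{for }\mu\in[0,4].
\end{equation*}
Using $\psi(\frac{Q}{2})=\psi(n+1)$, the derivative at zero simplifies to
\begin{equation*}
f'(0)=-\tfrac12\log S(Q)+\frac{1}{Q}\log\frac{\pi^{n+1}}{2^{n-1}n!}.
\end{equation*}
If $f'(0)\ge0$ we are done. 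Otherwise the task is to verify
\begin{equation*}
(n-1)\log S(Q)+\frac{4}{2n+2}\log\frac{\pi^{n+1}}{2^{n-1}n!}>0 .
\end{equation*}

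The main obstacle is that last inequality, uniformly in $n\ge1$. For $n=1$ it reads $\log(\pi^2/1)>0$, which is true. For $n\ge2$ I would treat $\log(\pi^{n+1}/(2^{n-1}n!))$, which becomes very negative, with Stirling's bound $n!\le e\sqrt n\,(n/e)^n$. This makes the negative term of order $-2\log n$ up to constants. Meanwhile $(n-1)\log S(Q)$ grows like $(n-1)\log(4\pi n^2/(n!)^{1/(n+1)})\approx (n-1)\log(4\pi e n)$, which dominates. So I would check the first few $n$ numerically and then handle large $n$ with the Stirling estimate.

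If the convexity bound turns out too lossy for some small $n$, the fallback is to evaluate $f$ directly on $[0,4]$ for those $n$ using the explicit Gamma values. Finally, when $n=1$ and $\mu$ is near $Q=4$, $\Gamma(\frac{Q-\mu}{2})\to\infty$, so $f\to+\infty$ and that end poses no difficulty.
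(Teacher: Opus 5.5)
Your proof is correct, but it takes a different route from the paper. The paper writes $F(n,\mu)=S(Q)^{\frac{Q-\mu}{2}}C(Q,\mu)$ explicitly and shows $F(n+1,\mu)/F(n,\mu)>1$ for $n\ge2$, which reduces the problem to $n\in\{1,2\}$. It then handles $F(1,\cdot)$ and $F(2,\cdot)$ separately by a digamma sign analysis, locating the minimiser $\mu_0\approx 2.13$ numerically. You instead fix $n$ and use strict convexity of $\log F(n,\cdot)$ in $\mu$, which rests on the same trigamma monotonicity the paper uses for $H'$. A single tangent line at $\mu=0$ then reduces everything to an endpoint inequality, uniformly in $n$. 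Your route is shorter and avoids numerics. The paper's route locates the true minimum for $n=1$ (about $25.07$), which is more than the lemma needs.

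Moreover, your ``main obstacle'' is not really one. Writing your $f$ as $\log F(n,\mu)=(n+1)\log\pi+(2n+2-\mu)\log n+(2-\frac{\mu}{2})\log 2+\log\Gamma(n+1-\frac{\mu}{2})-2\log\Gamma(n+1-\frac{\mu}{4})$, you get $f'(0)=-\log(\sqrt{2}\,n)<0$ for every $n$, so the branch $f'(0)\ge 0$ never occurs. You also get
\[
f(0)+4f'(0)=\log\frac{\pi^{n+1}n^{2n-2}}{n!},
\]
which equals $\log\pi^2>0$ for $n=1$ and is positive for $n\ge2$ since $n^{2n-2}\ge n^n\ge n!$. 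Because the tangent line is decreasing, this gives $f(\mu)\ge f(0)+\mu f'(0)\ge f(0)+4f'(0)>0$ on $(0,\min\{Q,4\}]$ directly. No Stirling estimate or numerical case checks are needed; for $n=1$ it yields $F(1,\mu)>\pi^2$, which suffices.
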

\begin{proof}
Let $F(n,\mu)=S(Q)^{\frac{Q-\mu}{2}}C(Q,\mu)$, by a direct computation, we have
\begin{equation*}
  F(n,\mu)=\frac{\pi^{n+1}n^{2n+2-\mu}2^{2-\frac{\mu}{2}}\Gamma\big(n+1-\frac{\mu}{2}\big)}{\Gamma^2\big(n+1-\frac{\mu}{4}\big)}.
\end{equation*}
Then
\begin{equation*}
  \frac{F(n+1,\mu)}{F(n,\mu)}=\pi(n+1)^2\Big(\frac{n+1}{n}\Big)^{2n+2}
  \underbrace{\Big(\frac{n}{n+1}\Big)^
  \mu\frac{n+1-\frac{\mu}{2}}{\big(n+1-\frac{\mu}{4}\big)^2}}_{:=G(n,\mu)}.
\end{equation*}
It is easy to check that $G(n,\mu)$ is decreasing on $\mu\in (0,4]$ when $n\geq2$ and $\mu\in (0,4)$ when $n=1$. Hence, for $n\geq2$, we have
\begin{equation*}
  \frac{F(n+1,\mu)}{F(n,\mu)}\geq\pi(n+1)^2\Big(\frac{n+1}{n}\Big)^{2n+2}
  G(n,4)=
  \pi(n-1)\Big(\frac{n+1}{n}\Big)^{2n}>1.
\end{equation*}
Therefore, $F(n,\mu)$ is increasing on $n\geq2$, and
\begin{equation*}
  F(n,\mu)\geq \min\{F(1,\mu),F(2,\mu)\}=\min\left\{
  \underbrace{\frac{\pi^{2}2^{2-\frac{\mu}{2}}\Gamma\big(2-\frac{\mu}{2}\big)}{\Gamma^2\big(2-\frac{\mu}{4}\big)}\bigg|_{\mu<4}}_{:=A(\mu)},
  \underbrace{\frac{\pi^{3}2^{8-\frac{3\mu}{2}}\Gamma\big(3-\frac{\mu}{2}\big)}{\Gamma^2\big(3-\frac{\mu}{4}\big)}\bigg|_{\mu\leq 4}}_{:=B(\mu)}\right\}.
\end{equation*}

In the following, we prove that $A(\mu)>1$ and $B(\mu)>1$.
For $A(\mu)$, we have
\begin{equation*}
  \log A(\mu)=2\log \pi+2\log2-\frac{\mu}{2}\log 2+\log \Gamma\Big(2-\frac{\mu}{2}\Big)-2\log \Gamma\Big(2-\frac{\mu}{4}\Big),
\end{equation*}
then
\begin{equation*}
  \frac{A'(\mu)}{A(\mu)}=-\frac{1}{2}\log 2-\frac{1}{2}\frac{\Gamma'\big(2-\frac{\mu}{2}\big)}{\Gamma\big(2-\frac{\mu}{2}\big)}+
  \frac{1}{2}\frac{\Gamma'\big(2-\frac{\mu}{4}\big)}{\Gamma\big(2-\frac{\mu}{4}\big)}=-\frac{1}{2}\log 2-\frac{1}{2}\psi\Big(2-\frac{\mu}{2}\Big)+\frac{1}{2}\psi\Big(2-\frac{\mu}{4}\Big),
\end{equation*}
where $\psi(x)=\frac{\Gamma'(x)}{\Gamma(x)}$ is the Digamma function. Define
\begin{equation*}
  H(\mu)=\psi\Big(2-\frac{\mu}{4}\Big)-\psi\Big(2-\frac{\mu}{2}\Big)-\log 2,
\end{equation*}
then
\begin{equation*}
  H'(\mu)=-\frac{1}{4}\psi'\Big(2-\frac{\mu}{4}\Big)+\frac{1}{2}\psi'\Big(2-\frac{\mu}{2}\Big)=
  -\frac{1}{4}\psi_1\Big(2-\frac{\mu}{4}\Big)+\frac{1}{2}\psi_1\Big(2-\frac{\mu}{2}\Big),
\end{equation*}
where $\psi_1(x)=\psi'(x)$ is the Trigamma function. Since $\psi_1$ is positive and decreasing on $(0,+\infty)$, we have
\begin{equation*}
  H'(\mu)>
  -\frac{1}{4}\psi_1\Big(2-\frac{\mu}{4}\Big)+\frac{1}{4}\psi_1\Big(2-\frac{\mu}{2}\Big)=
  \frac{1}{4}\Big[\psi_1\Big(2-\frac{\mu}{2}\Big)-\psi_1\Big(2-\frac{\mu}{4}\Big)\Big]>0,
\end{equation*}
which implies that $H(\mu)$ is increasing on $(0,4)$.
Using the formula $\psi(x+1)=\psi(x)+\frac{1}{x}$ for any $x>0$, we obtain
\begin{equation*}
 \lim\limits _{\mu\rightarrow 0^+} H(\mu)=-\log 2<0,\quad
 \lim\limits _{\mu\rightarrow 4^-} H(\mu)=\lim\limits _{\varepsilon\rightarrow 0^+}[\psi(1+\varepsilon)-\psi(2\varepsilon)
-\log2] =+\infty>0.
\end{equation*}
Therefore, $A(\mu)$ is decreasing  on $(0,\mu_0)$, and increasing  on $(\mu_0,4)$ for some $\mu_0>0$ such that $H(\mu_0)=0$.
By using some numerical estimates, we obtain $\mu_0\approx 2.13$.
Thus
\begin{equation*}
  A(\mu)\geq A(\mu_0)\approx 25.07>1.
\end{equation*}
Similarly, 
we can prove that $B(\mu)$ is decreasing  on $(0,4]$. Thus
\begin{equation*}
  B(\mu)\geq B(4)=4\pi^3>1.
\end{equation*}
This ends the proof.
\end{proof}


Next, we investigate the behaviour of the sequences near $\mathfrak{M}$ for inequality \eqref{eq:HLS'}.
\begin{lemma}\label{local}
Let $Q\geq 4$, $0<\mu< Q$ with $\mu\leq 4$. For any sequence $\{u_k\}\subset S^{1,2}(\mathbb{H}^{n})\backslash \mathfrak{M}$ satisfying
\begin{equation*}
  \inf\limits_k \|u_k\|_{S^{1,2}(\mathbb{H}^{n})}>0,\quad \mathrm{dist}(u_k,\mathfrak{M})\rightarrow0,
\end{equation*}
we have
\begin{equation}\label{local1}
  \liminf\limits_{k\rightarrow+\infty}\frac{\int_{\mathbb{H}^{n}}|\nabla_{\mathbb{H}} u_k|^{2}{d}\xi-S_{HL}(Q,\mu)	\Big(\int_{\mathbb{H}^{n}}\int_{\mathbb{H}^{n}}\frac{|u_k(\xi)|^{Q^{\ast}_{\mu}}|u_k(\eta)|
^{Q^{\ast}_{\mu}}}{|\eta^{-1}\xi|^{\mu}}{d}\xi{d}\eta\Big)^{\frac{1}{Q^{\ast}_{\mu}}}}{\mathrm{dist}(u_k,\mathfrak{M})^2}\geq A_1,
\end{equation}
and
\begin{equation}\label{local2}
  \limsup\limits_{k\rightarrow+\infty}\frac{\int_{\mathbb{H}^{n}}|\nabla_{\mathbb{H}} u_k|^{2}{d}\xi-S_{HL}(Q,\mu)	\Big(\int_{\mathbb{H}^{n}}\int_{\mathbb{H}^{n}}\frac{|u_k(\xi)|^{Q^{\ast}_{\mu}}|u_k(\eta)|
^{Q^{\ast}_{\mu}}}{|\eta^{-1}\xi|^{\mu}}{d}\xi{d}\eta\Big)^{\frac{1}{Q^{\ast}_{\mu}}}}{\mathrm{dist}(u_k,\mathfrak{M})^2}\leq 1.
\end{equation}
\end{lemma}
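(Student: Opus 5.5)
We follow the Bianchi--Egnell local argument. Set $d_k=\mathrm{dist}(u_k,\mathfrak{M})$. Since $\mathfrak{M}$ is a finite-dimensional manifold invariant under the isometry group $\mathcal{G}$ and under scalar multiplication, and $d_k\to0$ while $\inf_k\|u_k\|>0$, the infimum defining $d_k$ is attained for $k$ large: there exist $c_k\neq0$, $\lambda_k>0$, $\xi_k\in\mathbb{H}^n$ with $d_k=\|u_k-c_k\mathfrak{g}_{\lambda_k,\xi_k}U\|$. Both the deficit functional and $d_k$ are invariant under $\mathfrak{g}_{\lambda,\xi}$ and homogeneous of degree $2$ in scaling $u\mapsto tu$. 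We may therefore replace $u_k$ by $c_k^{-1}\mathfrak{g}_{\lambda_k,\xi_k}^{-1}u_k$ and write
\[
u_k=U+\varepsilon_k w_k,\qquad \|w_k\|_{S^{1,2}(\mathbb{H}^n)}=1,\qquad \varepsilon_k\to0 .
\]
Here $|c_k|$ stays bounded above and below by the hypotheses; if $c_k$ is complex we split into real and imaginary parts. The minimality of the distance gives $w_k\perp T_U\mathfrak{M}=\mathrm{Span}\{U,Z^1,\dots,Z^{2n+2}\}$ in $S^{1,2}$. By Definition \ref{defi} and Lemma \ref{eigen}, $w_k$ then lies in $\mathbb{P}_3$, so the Rayleigh quotient of $w_k$ is at least $\Lambda_3>Q^*_\mu$.

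Next we expand the deficit to second order. Write $D(u)=\iint |u(\xi)|^{Q^*_\mu}|u(\eta)|^{Q^*_\mu}|\eta^{-1}\xi|^{-\mu}$. Since $Q^*_\mu\ge2$ (this is where $\mu\le4$ enters), Lemma \ref{gs} with $\beta=Q^*_\mu-1\ge1$ gives a pointwise expansion of $|U+\varepsilon w|^{Q^*_\mu}$ with remainder $O(U^{Q^*_\mu-2}\varepsilon^2w^2+\varepsilon^{Q^*_\mu}|w|^{Q^*_\mu})$. Inserting it into the double integral and controlling the cross terms by the HLS inequality \eqref{eq:HLSH} and Folland--Stein \eqref{folland-stein}, we obtain
\[
D(U+\varepsilon w)=D(U)+2\varepsilon Q^*_\mu\!\iint\!\frac{U^{Q^*_\mu}U^{Q^*_\mu-1}w}{|\eta^{-1}\xi|^\mu}
+\varepsilon^2\Big[Q^*_\mu{}^2 I_1(w)+Q^*_\mu(Q^*_\mu-1)I_2(w)\Big]+o(\varepsilon^2).
\]
Here $I_1(w)=\iint U^{Q^*_\mu-1}w\,U^{Q^*_\mu-1}w/|\eta^{-1}\xi|^\mu$ and $I_2(w)=\iint U^{Q^*_\mu}U^{Q^*_\mu-2}w^2/|\eta^{-1}\xi|^\mu$. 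The linear term vanishes because, by \eqref{limit3}, it is a multiple of $\langle U,w\rangle_{S^{1,2}}=0$. Raising to the power $1/Q^*_\mu$, using $\|U\|^2=S_{HL}D(U)^{1/Q^*_\mu}$, and normalizing via \eqref{limit3}, the deficit becomes
\[
\varepsilon_k^2\Big[1-\tfrac{\theta}{\|U\|^2}\big(Q^*_\mu I_1(w_k)+(Q^*_\mu-1)I_2(w_k)\big)\Big]+o(\varepsilon_k^2),
\]
where $\theta=\alpha(Q,\mu)$ up to the normalization. The precise constant has to match the one in \eqref{eig}, and Lemma \ref{tech} is presumably used to keep track of the relation between $\alpha(Q,\mu)$ and $1$.

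The upper bound \eqref{local2} follows directly: $I_1$ and $I_2$ are nonnegative on the relevant part, so the bracket is at most $1$. Strictly, $I_1$ could be negative, but its contribution still lands on the correct side after combining it with the eigenvalue inequality for $\Lambda_1=1$; this needs a check.

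For the lower bound \eqref{local1}, we use $\Lambda_3>Q^*_\mu$ in the form
\[
1+I_2\ \ge\ \Lambda_3(I_1+I_2)
\]
in the normalization of \eqref{eig}. This gives $Q^*_\mu I_1+(Q^*_\mu-1)I_2\le \frac{Q^*_\mu}{\Lambda_3}(1+I_2)-I_2$, and the bracket is then bounded below by a positive constant $A_1$ depending on $\Lambda_3-Q^*_\mu$ and on a uniform bound for $I_2(w_k)$ obtained from HLS and Sobolev.

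The main obstacle is the bookkeeping of constants: matching the normalization of \eqref{eig}, where the bubble $U$ solves \eqref{limit3} with the coefficient $\alpha(Q,\mu)$ rather than $1$, against the coefficient in the Taylor expansion. Only with the constants matched does the spectral gap $\Lambda_3>Q^*_\mu$ translate into strict positivity. A second point is justifying that the remainder is $o(\varepsilon^2)$ uniformly in $w_k$; this uses $Q^*_\mu\ge2$ together with HLS, and possibly the case split in Lemma \ref{gs}.
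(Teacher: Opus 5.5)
Your outline follows the paper's Bianchi--Egnell scheme: nearest point on $\mathfrak{M}$, orthogonal decomposition, second-order expansion, and the spectral gap $\Lambda_3>Q^*_\mu$. Using the Rayleigh inequality once, instead of the paper's four-case split, is a real simplification. However, the two steps you leave open are exactly the ones that decide the lemma, and your suggested fix for the second one does not work.

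First, the constant. Testing \eqref{limit3} with $U$ gives $\|U\|^2_{S^{1,2}(\mathbb{H}^n)}=\alpha(Q,\mu)D(U)$. Since $U$ is extremal, $S_{HL}(Q,\mu)D(U)^{1/Q^*_\mu}=\|U\|^2_{S^{1,2}(\mathbb{H}^n)}$. Hence $S_{HL}(Q,\mu)D(U)^{1/Q^*_\mu-1}=\alpha(Q,\mu)$. With $p=Q^*_\mu$, the quotient in \eqref{local1}--\eqref{local2} therefore equals $1-\alpha\,(pI_1(w_k)+(p-1)I_2(w_k))+o(1)$, with no factor $\|U\|^{-2}$. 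Lemma \ref{eigen} is only consistent if the nonlocal terms of \eqref{eig} are multiplied by $\alpha$. Indeed, $U$ and $Z^a$ solve \eqref{limit3} and its linearization, both of which carry $\alpha$. The quotient $(\|v\|^2+\alpha I_2(v))/(\alpha(I_1(v)+I_2(v)))$ equals $1$ at $U$ and $p$ at $Z^a$. Without $\alpha$ it equals $(1+\alpha)/2\neq 1$ at $U$, by Lemma \ref{tech}. In this normalization, $w_k\in\mathbb{P}_3$ gives $1+\alpha I_2\ge\Lambda_3\alpha(I_1+I_2)$, and your own algebra yields
\[
\alpha\big(pI_1+(p-1)I_2\big)\le\frac{p}{\Lambda_3}-\Big(1-\frac{p}{\Lambda_3}\Big)\alpha I_2\le\frac{p}{\Lambda_3}.
\]
So \eqref{local1} holds with $A_1=1-Q^*_\mu/\Lambda_3$. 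The $I_2$ term has the favourable sign, so no uniform bound on $I_2$ is needed, and Lemma \ref{tech} is not used at all. In the paper, Lemma \ref{tech} ($\alpha<1$) enters in Case 2 to absorb a leftover $1-\alpha$. That term appears because the paper uses the Rayleigh inequality without $\alpha$ while the deficit carries $\alpha$; so your guess about the role of Lemma \ref{tech} matches the paper's bookkeeping, not a necessity.

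Second, the upper bound \eqref{local2} needs $\liminf_k\,(pI_1(w_k)+(p-1)I_2(w_k))\ge 0$, which essentially means $I_1(w_k)\ge 0$. The $\Lambda_1=1$ inequality cannot give this, because it only yields the upper bound $\alpha I_1\le\|w_k\|^2$. What you need is that $|\eta^{-1}\xi|^{-\mu}$ is a positive definite kernel on $\mathbb{H}^n$ for $0<\mu<Q$, applied to $f=U^{p-1}w_k$. One way to see this: via the Cayley transform, the corresponding operator on the CR sphere has the explicit, strictly positive eigenvalues computed by Frank and Lieb. The paper uses the same fact without comment when it drops the $P_{k,1}$, $P_{k,2}$ terms to prove \eqref{local2}.
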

\begin{proof}
Let $d_k={\rm dist}(u_k,\mathfrak{M})=\inf_{c\in \mathbb{C},\lambda>0,\xi\in \mathbb{H}^n}\|u_k-c\mathfrak{g}_{\lambda,\xi}U\|_{S^{1,2}(\mathbb{H}^{n})}$, then $d_k\rightarrow0$ as $k\to +\infty$. It is well known that for each $u_k\in {S}^{1,2}(\mathbb{H}^n)$, there exists $(c_k,\lambda_k,\xi_k)\in \mathbb{C}\backslash \{0\}\times\mathbb{R}^+\times\mathbb{H}^n$ such that
\begin{equation*}
  d_k=\big\|u_k-c_k\mathfrak{g}_{\lambda_k,\xi_k}U\big\|_{{S}^{1,2}(\mathbb{H}^n)}.
\end{equation*}
    Since $\mathfrak{M}\backslash\{0\}$ is an $(2n+3)$-dimensional smooth  manifold
    embedded in ${S}^{1,2}(\mathbb{H}^n)$,
    we have
    \begin{equation}\label{perpen}
      \big(u_k-c_k\mathfrak{g}_{\lambda_k,\xi_k}U\big)\perp T_{c_k\mathfrak{g}_{\lambda_k,\xi_k}U}\mathfrak{M},
    \end{equation}
    where the tangent space at $(c_k,\lambda_k,\xi_k)$ is given by (see Lemma \ref{eigen})
    \begin{equation*}
      T_{c_k\mathfrak{g}_{\lambda_k,\xi_k}U}\mathfrak{M}={\rm Span}\left\{\mathfrak{g}_{\lambda_k,\xi_k}U, \frac{\partial \mathfrak{g}_{r,\xi_k}U}{\partial r}\bigg|_{r=\lambda_k},\frac{\partial \mathfrak{g}_{\lambda_k,\eta}U}{\partial \eta^{(a)}}\bigg|_{\eta=\xi_k},~a=1,\ldots,2n+1
      \right\}.
    \end{equation*}
    Let
    \begin{equation*}
      u_k=c_k\mathfrak{g}_{\lambda_k,\xi_k}U+d_k w_k,
    \end{equation*}
     then $w_k$ is perpendicular to  $T_{c_k\mathfrak{g}_{\lambda_k,\xi_k}U}\mathfrak{M}$, $\|w_k\|_{{S}^{1,2}(\mathbb{H}^n)}=1$ and
    \begin{equation*}
    \|u_k\|^2_{{S}^{1,2}(\mathbb{H}^n)}=d_k^2\|w_k\|_{{S}^{1,2}(\mathbb{H}^n)}^2+c_k^2\|\mathfrak{g}_{\lambda_k,\xi}U\|^2_{{S}^{1,2}(\mathbb{H}^n)}
    =d_k^2+c_k^2\|U\|^2_{{S}^{1,2}(\mathbb{H}^n)},
    \end{equation*}
    where we have used the fact that
    \begin{equation*}
      \|\mathfrak{g}_{\lambda_k,\xi}U\|_{{S}^{1,2}(\mathbb{H}^n)}=\|U\|_{{S}^{1,2}(\mathbb{H}^n)}.
    \end{equation*}
    Since $Q_\mu^*\geq2$, by the orthogonality from above, we get
    \begin{align}\label{local3}
    \int_{\mathbb{H}^{n}}\int_{\mathbb{H}^{n}}\frac{|u_k|^{Q^{\ast}_{\mu}}|u_k|
^{Q^{\ast}_{\mu}}}{|\eta^{-1}\xi|^{\mu}}&{d}\xi{d}\eta
  =  c_k^{2\cdot Q_{\mu}^{\ast}} \int_{\mathbb{H}^{n}}\int_{\mathbb{H}^{n}}\frac{|\mathfrak{g}_{\lambda_k,\xi_k}U|^{Q^{\ast}_{\mu}}|\mathfrak{g}_{\lambda_k,\xi_k}U|
^{Q^{\ast}_{\mu}}}{|\eta^{-1}\xi|^{\mu}}{d}\xi{d}\eta
\nonumber  \\&  + Q_{\mu}^{\ast}(Q_{\mu}^{\ast}-1)c_k^{2(Q_{\mu}^{\ast}-1)}d_k^2 \underbrace{\int_{\mathbb{H}^{n}}\int_{\mathbb{H}^{n}}\frac{|\mathfrak{g}_{\lambda_k,\xi_k}U|^{Q^{\ast}_{\mu}}|\mathfrak{g}_{\lambda_k,\xi_k}U|
^{Q^{\ast}_{\mu}-2} w_k^2}{|\eta^{-1}\xi|^{\mu}}{d}\xi{d}\eta}_{:=P_{k,1}} \nonumber\\
    & + (Q_{\mu}^{\ast})^2c_k^{2(Q_{\mu}^{\ast}-1)}d_k^2 \underbrace{\int_{\mathbb{H}^{n}}\int_{\mathbb{H}^{n}}\frac{|\mathfrak{g}_{\lambda_k,\xi_k}U|^{Q^{\ast}_{\mu}-1}w_k|\mathfrak{g}_{\lambda_k,\xi_k}U|
^{Q^{\ast}_{\mu}-1}w_k}{|\eta^{-1}\xi|^{\mu}}{d}\xi{d}\eta}_{:=P_{k,2}}+ o(d_k^2),
    \end{align}
    due to
    \begin{equation*}
      \int_{\mathbb{H}^{n}}\int_{\mathbb{H}^{n}}\frac{|\mathfrak{g}_{\lambda_k,\xi_k}U|^{Q^{\ast}_{\mu}}|\mathfrak{g}_{\lambda_k,\xi_k}U|
^{Q^{\ast}_{\mu}-1}w_k}{|\eta^{-1}\xi|^{\mu}}{d}\xi{d}\eta=\alpha(Q,\mu)^{-1}\int_{\mathbb{H}^{n}} \nabla_\mathbb{H}\mathfrak{g}_{\lambda_k,\xi_k}U\cdot \nabla_\mathbb{H}w_k d\xi=0.
    \end{equation*}
By \eqref{folland-stein} and \eqref{eq:HLSH}, using the 
 H\"{o}lder inequality, 
we obtain
\begin{align}\label{local4}
  P_{k,1}\leq & C(Q,\mu)\|\mathfrak{g}_{\lambda_k,\xi_k}U\|_{L^{Q^*}(\mathbb{H}^{n})}^{2(Q_\mu^*-1)}\|w_k\|_{L^{Q^*}(\mathbb{H}^{n})}^{2}
 \nonumber \\=&C(Q,\mu)\|U\|_{L^{Q^*}(\mathbb{H}^{n})}^{2(Q_\mu^*-1)}\|w_k\|_{L^{Q^*}(\mathbb{H}^{n})}^{2}
=C(Q,\mu)S(Q)^{\frac{Q+2-\mu}{2}} \|w_k\|_{L^{Q^*}(\mathbb{H}^{n})}^{2} \nonumber\\
\leq& S(Q)^{\frac{Q-\mu}{2}}C(Q,\mu)\|w_k\|^2_{S^{1,2}(\mathbb{H}^{n})}
=S(Q)^{\frac{Q-\mu}{2}}C(Q,\mu),
\end{align}
and
\begin{equation*}
  P_{k,2}\leq  S(Q)^{\frac{Q-\mu}{2}}C(Q,\mu),
\end{equation*}
since
    \begin{equation*}
      \|\mathfrak{g}_{\lambda_k,\xi_k}U\|_{L^{Q^*}(\mathbb{H}^{n})}=\|U\|_{L^{Q^*}(\mathbb{H}^{n})}.
    \end{equation*}
    Now, we prove \eqref{local1}, and the proof is divided into four cases.

\textbf{Case 1:} $P_{k,1}=P_{k,2}=o_k(1)$. In this case, from \eqref{local3},
similar to \eqref{local4},
we have
\begin{align*}
    \int_{\mathbb{H}^{n}}\int_{\mathbb{H}^{n}}\frac{|u_k|^{Q^{\ast}_{\mu}}|u_k|
^{Q^{\ast}_{\mu}}}{|\eta^{-1}\xi|^{\mu}}{d}\xi{d}\eta
  \leq &c_k^{2\cdot Q_{\mu}^{\ast}} \int_{\mathbb{H}^{n}}\int_{\mathbb{H}^{n}}\frac{|\mathfrak{g}_{\lambda_k,\xi_k}U|^{Q^{\ast}_{\mu}}|\mathfrak{g}_{\lambda_k,\xi_k}U|
^{Q^{\ast}_{\mu}}}{|\eta^{-1}\xi|^{\mu}}{d}\xi{d}\eta+ o(d_k^2) \nonumber\\
\leq& c_k^{2\cdot Q_{\mu}^{\ast}} S(Q)^{\frac{Q-\mu}{2}}C(Q,\mu)\|U\|^2_{S^{1,2}(\mathbb{H}^{n})}+o(d_k^2).
    \end{align*}
    Thus it follows that
    \begin{align*}
      \left(\int_{\mathbb{H}^{n}}\int_{\mathbb{H}^{n}}\frac{|u_k|^{Q^{\ast}_{\mu}}|u_k|
^{Q^{\ast}_{\mu}}}{|\eta^{-1}\xi|^{\mu}}{d}\xi{d}\eta\right)^{\frac{1}{Q_\mu^*}}\leq& \bigg(c_k^{2\cdot Q_{\mu}^{\ast}}  S(Q)^{\frac{Q-\mu}{2}}C(Q,\mu) \|U\|^2_{S^{1,2}(\mathbb{H}^{n})}+o(d_k^2)\bigg)^{\frac{1}{Q_\mu^*}}\\
\leq& c_k^{2} \Big( S(Q)^{\frac{Q-\mu}{2}}C(Q,\mu) \Big)^{\frac{1}{Q_\mu^*}}\|U\|^{\frac{2}{Q_\mu^*}}_{S^{1,2}(\mathbb{H}^{n})}+o(d_k^2),
    \end{align*}
    then
    \begin{align*}
     & \int_{\mathbb{H}^{n}}|\nabla_{\mathbb{H}} u_k|^{2}{d}\xi-S_{HL}(Q,\mu)	\left(\int_{\mathbb{H}^{n}}\int_{\mathbb{H}^{n}}\frac{|u_k|^{Q^{\ast}_{\mu}}|u_k|
^{Q^{\ast}_{\mu}}}{|\eta^{-1}\xi|^{\mu}}{d}\xi{d}\eta\right)^{\frac{1}{Q^{\ast}_{\mu}}}\\
\geq&
d_k^2+c_k^2\left[\|U\|^2_{{S}^{1,2}(\mathbb{H}^n)}-S_{HL}(Q,\mu)\Big( S(Q)^{\frac{Q-\mu}{2}}C(Q,\mu) \Big)^{\frac{1}{Q_\mu^*}} \|U\|^{\frac{2}{Q_\mu^*}}_{S^{1,2}(\mathbb{H}^{n})}\right]+o(d_k^2)
=d_k^2+o(d_k^2),
    \end{align*}
    thanks to
    \begin{align*}
      &S_{HL}(Q,\mu)\big( S(Q)^{\frac{Q-\mu}{2}}C(Q,\mu) \big)^{\frac{1}{Q_\mu^*}} \|U\|^{\frac{2}{Q_\mu^*}-2}_{S^{1,2}(\mathbb{H}^{n})}\\
      =&
      S(Q)C(Q,\mu)^{-\frac{1}{Q_\mu^*}}
      \Big(S(Q)^{\frac{Q-\mu}{2}}C(Q,\mu)\Big)^{\frac{Q-2}{2Q-\mu}}
      S(Q)^{-\frac{Q(Q+2-\mu)}{2(2Q-\mu)}}=1.
    \end{align*}
    Choosing $d_k$ small enough, we have $C\in (0,1)$ such that
    \begin{equation*}
      \int_{\mathbb{H}^{n}}|\nabla_{\mathbb{H}} u_k|^{2}{d}\xi-S_{HL}(Q,\mu)	\left(\int_{\mathbb{H}^{n}}\int_{\mathbb{H}^{n}}\frac{|u_k|^{Q^{\ast}_{\mu}}|u_k|
^{Q^{\ast}_{\mu}}}{|\eta^{-1}\xi|^{\mu}}{d}\xi{d}\eta\right)^{\frac{1}{Q^{\ast}_{\mu}}}\geq Cd_k^2,
    \end{equation*}
    which proves \eqref{local1}.

    \textbf{Case 2:} $P_{k,1},P_{k,2}\geq \widetilde{C}$ for some $\widetilde{C}>0$. In this case, by \eqref{perpen},
 the definition of $\Lambda_3$ implies that
    \begin{equation*}
    \int_{\mathbb{H}^{n}}|\nabla_{\mathbb{H}} w_k|^{2}{d}\xi+P_{k,1}\geq \Lambda_3(P_{k,1}+P_{k,2}),
    \end{equation*}
    thus
    \begin{equation*}
    1\geq (\Lambda_3-1)P_{k,1}+\Lambda_3 P_{k,2}.
    \end{equation*}
    Then from \eqref{local3} and $\Lambda_3>Q_\mu^*$, we can derive that
    \begin{align*}
    \int_{\mathbb{H}^{n}}\int_{\mathbb{H}^{n}}\frac{|u_k|^{Q^{\ast}_{\mu}}|u_k|
^{Q^{\ast}_{\mu}}}{|\eta^{-1}\xi|^{\mu}}{d}\xi{d}\eta
  \leq &c_k^{2\cdot Q_{\mu}^{\ast}} \int_{\mathbb{H}^{n}}\int_{\mathbb{H}^{n}}\frac{|\mathfrak{g}_{\lambda_k,\xi_k}U|^{Q^{\ast}_{\mu}}|\mathfrak{g}_{\lambda_k,\xi_k}U|
^{Q^{\ast}_{\mu}}}{|\eta^{-1}\xi|^{\mu}}{d}\xi{d}\eta \nonumber\\
&+Q_{\mu}^{\ast}(Q_{\mu}^{\ast}-\Lambda_3)c_k^{2(Q_{\mu}^{\ast}-1)} (P_{k,1}+P_{k,2})d_k^2\\
&+Q_{\mu}^{\ast}c_k^{2(Q_{\mu}^{\ast}-1)} \big[(\Lambda_3-1)P_{k,1}+\Lambda_3P_{k,2}\big]d_k^2+ o(d_k^2)\\
\leq& c_k^{2\cdot Q_{\mu}^{\ast}} S(Q)^{\frac{Q-\mu}{2}}C(Q,\mu)\|U\|^2_{S^{1,2}(\mathbb{H}^{n})}\\
&+Q_{\mu}^{\ast}c_k^{2(Q_{\mu}^{\ast}-1)}
\big[2\widetilde{C}(Q_{\mu}^{\ast}-\Lambda_3)+1\big]d_k^2+o(d_k^2).
    \end{align*}
    Thus, we have
    \begin{align*}
      &\left(\int_{\mathbb{H}^{n}}\int_{\mathbb{H}^{n}}\frac{|u_k|^{Q^{\ast}_{\mu}}|u_k|
^{Q^{\ast}_{\mu}}}{|\eta^{-1}\xi|^{\mu}}{d}\xi{d}\eta\right)^{\frac{1}{Q_\mu^*}}\\
\leq& \bigg(c_k^{2\cdot Q_{\mu}^{\ast}}  S(Q)^{\frac{Q-\mu}{2}}C(Q,\mu) \|U\|^2_{S^{1,2}(\mathbb{H}^{n})}+Q_{\mu}^{\ast}c_k^{2(Q_{\mu}^{\ast}-1)}
\big[2\widetilde{C}(Q_{\mu}^{\ast}-\Lambda_3)+1\big]d_k^2\bigg)^{\frac{1}{Q_\mu^*}}+o(d_k^2)\\
\leq& c_k^{2} \Big( S(Q)^{\frac{Q-\mu}{2}}C(Q,\mu) \Big)^{\frac{1}{Q_\mu^*}}\|U\|^{\frac{2}{Q_\mu^*}}_{S^{1,2}(\mathbb{H}^{n})}\\
&+\Big( S(Q)^{\frac{Q-\mu}{2}}C(Q,\mu) \Big)^{\frac{1}{Q_\mu^*}-1}\|U\|^{\frac{2}{Q_\mu^*}-2}_{S^{1,2}(\mathbb{H}^{n})}\big[2\widetilde{C}(Q_{\mu}^{\ast}-\Lambda_3)+1\big]d_k^2+o(d_k^2),
    \end{align*}
    which leads to
    \begin{align*}
     & \int_{\mathbb{H}^{n}}|\nabla_{\mathbb{H}} u_k|^{2}{d}\xi-S_{HL}(Q,\mu)	\left(\int_{\mathbb{H}^{n}}\int_{\mathbb{H}^{n}}\frac{|u_k|^{Q^{\ast}_{\mu}}|u_k|
^{Q^{\ast}_{\mu}}}{|\eta^{-1}\xi|^{\mu}}{d}\xi{d}\eta\right)^{\frac{1}{Q^{\ast}_{\mu}}}\\
\geq&
d_k^2\left[1-S_{HL}(Q,\mu)\Big( S(Q)^{\frac{Q-\mu}{2}}C(Q,\mu) \Big)^{\frac{1}{Q_\mu^*}-1}\big[2\widetilde{C}(Q_{\mu}^{\ast}-\Lambda_3)+1\big]\|U\|^{\frac{2}{Q_\mu^*}-2}_{S^{1,2}(\mathbb{H}^{n})}
\right]\\
&+c_k^2\left[\|U\|^2_{{S}^{1,2}(\mathbb{H}^n)}-S_{HL}(Q,\mu)\Big( S(Q)^{\frac{Q-\mu}{2}}C(Q,\mu) \Big)^{\frac{1}{Q_\mu^*}} \|U\|^{\frac{2}{Q_\mu^*}}_{S^{1,2}(\mathbb{H}^{n})}\right]+o(d_k^2)\\
>&2\widetilde{C}(\Lambda_3-Q_{\mu}^{\ast})d_k^2+o(d_k^2),
    \end{align*}
    since $S(Q)^{\frac{Q-\mu}{2}}C(Q,\mu) >1$ by Lemma \ref{tech}. Choosing $\widetilde{C}>0$ small enough such that $2\widetilde{C}(\Lambda_3-Q_{\mu}^{\ast})<1$, we conclude \eqref{local1}.

    \textbf{Case 3:} $P_{k,1}=o_k(1)$, $P_{k,2}\geq \widetilde{C}>0$. With a similar argument of Case 2, we obtain \eqref{local1}.

    \textbf{Case 4:} $P_{k,1}\geq \widetilde{C}>0$, $P_{k,2}=o_k(1)$. With a similar argument of Case 2, we obtain \eqref{local1}.

    Next we show that \eqref{local2}  holds true. Thanks to \eqref{local3}, we know
    \begin{align*}
      \left(\int_{\mathbb{H}^{n}}\int_{\mathbb{H}^{n}}\frac{|u_k|^{Q^{\ast}_{\mu}}|u_k|
^{Q^{\ast}_{\mu}}}{|\eta^{-1}\xi|^{\mu}}{d}\xi{d}\eta\right)^{\frac{1}{Q_\mu^*}}
\geq c_k^{2} \Big( S(Q)^{\frac{Q-\mu}{2}}C(Q,\mu) \Big)^{\frac{1}{Q_\mu^*}}\|U\|^{\frac{2}{Q_\mu^*}}_{S^{1,2}(\mathbb{H}^{n})}+o(d_k^2).
    \end{align*}
    Therefore,
    \begin{align*}
     & \int_{\mathbb{H}^{n}}|\nabla_{\mathbb{H}} u_k|^{2}{d}\xi-S_{HL}(Q,\mu)	\left(\int_{\mathbb{H}^{n}}\int_{\mathbb{H}^{n}}\frac{|u_k|^{Q^{\ast}_{\mu}}|u_k|
^{Q^{\ast}_{\mu}}}{|\eta^{-1}\xi|^{\mu}}{d}\xi{d}\eta\right)^{\frac{1}{Q^{\ast}_{\mu}}}\\
\leq&
d_k^2+c_k^2\bigg[\|U\|^2_{{S}^{1,2}(\mathbb{H}^n)}-S_{HL}(Q,\mu)\Big( S(Q)^{\frac{Q-\mu}{2}}C(Q,\mu) \Big)^{\frac{1}{Q_\mu^*}} \|U\|^{\frac{2}{Q_\mu^*}}_{S^{1,2}(\mathbb{H}^{n})}\bigg]+o(d_k^2)
=d_k^2+o(d_k^2),
    \end{align*}
    then \eqref{local2} follows immediately.
\end{proof}

Now, we are ready to prove Theorem \ref{main thm0}.

\begin{proof}
[Proof of Theorem \ref{main thm0}]
We argue by contradiction. In fact, if the theorem is false, then there exists a sequence $\{u_k\}\subset S^{1,2}(\mathbb{H}^{n})\backslash \mathfrak{M}$ such that
\begin{equation}\label{local10}
  \frac{\int_{\mathbb{H}^{n}}|\nabla_{\mathbb{H}} u_k|^{2}{d}\xi-S_{HL}(Q,\mu)	\Big(\int_{\mathbb{H}^{n}}\int_{\mathbb{H}^{n}}\frac{|u_k(\xi)|^{Q^{\ast}_{\mu}}|u_k(\eta)|
^{Q^{\ast}_{\mu}}}{|\eta^{-1}\xi|^{\mu}}{d}\xi{d}\eta\Big)^{\frac{1}{Q^{\ast}_{\mu}}}}{\mathrm{dist}
(u_k,\mathfrak{M})^2}\longrightarrow0,
\end{equation}
or
\begin{equation}\label{local20}
  \frac{\int_{\mathbb{H}^{n}}|\nabla_{\mathbb{H}} u_k|^{2}{d}\xi-S_{HL}(Q,\mu)	\Big(\int_{\mathbb{H}^{n}}\int_{\mathbb{H}^{n}}\frac{|u_k(\xi)|^{Q^{\ast}_{\mu}}|u_k(\eta)|
^{Q^{\ast}_{\mu}}}{|\eta^{-1}\xi|^{\mu}}{d}\xi{d}\eta\Big)^{\frac{1}{Q^{\ast}_{\mu}}}}
{\mathrm{dist}(u_k,\mathfrak{M})^2}\longrightarrow+\infty,
\end{equation}
as $k\rightarrow+\infty$. By homogeneity, we can assume that $\|u_k\|_{S^{1,2}(\mathbb{H}^{n})}=1$, and up to a subsequence, we assume that $\mathrm{dist}(u_k,\mathfrak{M})\rightarrow \tau\in [0,1]$ since
\begin{equation*}
\mathrm{dist}(u_k,\mathfrak{M})=\inf_{c\in \mathbb{C},\lambda>0,\xi\in \mathbb{H}^n}\|u_k-c\mathfrak{g}_{\lambda,\xi}U\|_{S^{1,2}(\mathbb{H}^{n})}\leq \|u_k\|_{S^{1,2}(\mathbb{H}^{n})}=1.
\end{equation*}
 If \eqref{local20} holds, it follows that $\tau=0$, which contradicts Lemma \ref{local}.
 Moreover, if \eqref{local10} holds, it
also leads to a contradiction with Lemma \ref{local} when
 $\tau=0$. Consequently, the only remaining possibility is
that \eqref{local10} holds and $\xi\in (0,1]$, that is,
\begin{equation*}
  \mathrm{dist}(u_k,\mathfrak{M}) \rightarrow \tau>0,\quad \int_{\mathbb{H}^{n}}|\nabla_{\mathbb{H}} u_k|^{2}{d}\xi-S_{HL}(Q,\mu)	\left(\int_{\mathbb{H}^{n}}\int_{\mathbb{H}^{n}}\frac{|u_k(\xi)|^{Q^{\ast}_{\mu}}|u_k(\eta)|
^{Q^{\ast}_{\mu}}}{|\eta^{-1}\xi|^{\mu}}{d}\xi{d}\eta\right)^{\frac{1}{Q^{\ast}_{\mu}}}\longrightarrow0,
\end{equation*}
as $k\rightarrow+\infty$. Then, we have
\begin{equation*}
 \|u_k\|_{S^{1,2}(\mathbb{H}^{n})}=1,\quad \left(\int_{\mathbb{H}^{n}}\int_{\mathbb{H}^{n}}\frac{|u_k(\xi)|^{Q^{\ast}_{\mu}}|u_k(\eta)|
^{Q^{\ast}_{\mu}}}{|\eta^{-1}\xi|^{\mu}}{d}\xi{d}\eta\right)^{\frac{1}{Q^{\ast}_{\mu}}}\longrightarrow \frac{1}{S_{HL}(Q,\mu)},\qquad \mathrm{as} ~k\rightarrow+\infty.
\end{equation*}
By a concentration-compactness principle (see \cite[Theorem 1.3]{DGY22}) suitably adapted to the Heisenberg setting, there exist $\lambda_k>0$, $\xi_k\in \mathbb{H}^n$ and some $U_0\in \mathfrak{M}$ such that
\begin{equation*}
  \mathfrak{g}_{\lambda_k,\xi_k}u_k\rightarrow U_0\qquad \mathrm{in}~ S^{1,2}(\mathbb{H}^{n})~\mathrm{as} ~k\rightarrow+\infty.
\end{equation*}
Observe that $\mathrm{dist}(u_k,\mathfrak{M})=\mathrm{dist}(\mathfrak{g}_{\lambda_k,\xi_k}u_k,\mathfrak{M})$, we have $\mathrm{dist}(u_k,\mathfrak{M})\rightarrow0$ as $k\rightarrow+\infty$,
a contradiction.
\end{proof}

\section{Proof of Theorem \ref{main thm1}}\label{sec4}
In this section, we show how Theorem \ref{main thm1} can be derived as a consequence of
Theorem \ref{main thm0}.

\begin{proof}
[Proof of Theorem \ref{main thm1}]
Assume by contradiction that the theorem is not true, then there exists a sequence  $\{u_k\}\subset S_0^{1,2}(\Omega)$ with $\|u_k\|_{S^{1,2}(\Omega)}=1$ such that
\begin{equation*}
  \frac{\int_{\Omega}|\nabla_{\mathbb{H}} u_k|^{2}{d}\xi-S_{HL}(Q,\mu)	\Big(\int_{\Omega}\int_{\Omega}\frac{|u_k(\xi)|^{Q^{\ast}_{\mu}}|u_k(\eta)|
^{Q^{\ast}_{\mu}}}{|\eta^{-1}\xi|^{\mu}}{d}\xi{d}\eta\Big)^{\frac{1}{Q^{\ast}_{\mu}}}}
{\|u_k\|_{L^{\frac{Q}{Q-2}}_w(\Omega)}^2}\longrightarrow0,\qquad \mathrm{as} ~k\rightarrow+\infty.
\end{equation*}
Since $\|u_k\|_{L^{\frac{Q}{Q-2}}_w(\Omega)}\leq \|u_k\|_{L^{\frac{Q}{Q-2}}(\Omega)}\leq C_1\|u_k\|_{L^{\frac{2Q}{Q-2}}(\Omega)}\leq C_2$, we have
\begin{equation*}
  \int_{\Omega}|\nabla_{\mathbb{H}} u_k|^{2}{d}\xi-S_{HL}(Q,\mu)	\left(\int_{\Omega}\int_{\Omega}\frac{|u_k(\xi)|^{Q^{\ast}_{\mu}}|u_k(\eta)|
^{Q^{\ast}_{\mu}}}{|\eta^{-1}\xi|^{\mu}}{d}\xi{d}\eta\right)^{\frac{1}{Q^{\ast}_{\mu}}}\longrightarrow0,\qquad \mathrm{as} ~k\rightarrow+\infty.
\end{equation*}
Hence, by Theorem \ref{main thm0}, there exist $(c_k,\lambda_k)\rightarrow (1,+\infty)$ and $\{\xi_k\}\subset \Omega$ such that
\begin{equation}\label{weaknorm1}
  \mathrm{dist}(u_k,\mathfrak{M})=\|u_k-c_k\mathfrak{g}_{\lambda_k,\xi_k}U\|_{S^{1,2}(\mathbb{H}^{n})}
  \rightarrow0,\qquad \mathrm{as} ~k\rightarrow+\infty.
\end{equation}
We claim that there exists $\widetilde{C}>0$ such that
\begin{equation*}
  \inf\limits_{\xi\in \Omega}\|\nabla_\mathbb{H}\mathfrak{g}_{\lambda,\xi}U\|^2_{L^2(\Omega^c)}\geq \widetilde{C}\lambda^{2-Q},\qquad \mathrm{as} ~\lambda\rightarrow+\infty.
\end{equation*}
Indeed, for any $\xi\in \Omega$, there exists a sufficiently large $R>0$ such that
\begin{equation*}
  \|\nabla_\mathbb{H}\mathfrak{g}_{\lambda,\xi}U\|^2_{L^2(\Omega^c)}=\|\nabla_\mathbb{H}\mathfrak{g}_{\lambda,0}U\|^2
  _{L^2((\tau_{\xi^{-1}}(\Omega))^c)}\geq \|\nabla_\mathbb{H}\mathfrak{g}_{\lambda,0}U\|^2_{L^2(B_R^c)}.
\end{equation*}
By direct computations, as $\lambda\rightarrow+\infty$, we get
\begin{align*}
  \|\nabla_\mathbb{H}\mathfrak{g}_{\lambda,0}U\|^2_{L^2(B_R^c)}\geq& C\int_{\delta_\lambda(B_R^c)}
  \frac{|z|^2}{[(1+|z|^2)^2+t^2]^{\frac{Q}{2}}}dz dt
 \geq  C\int_{\delta_\lambda(B_R^c)}
  \frac{|z|^2}{(|z|^4+t^2)^{\frac{Q}{2}}}dz dt\\
  \geq &C\int_{\lambda R}^{+\infty}
  \frac{r^2r^{Q-1}}{r^{2Q}}dr=C\lambda^{2-Q}.
\end{align*}
Hence, we obtain
\begin{equation}\label{weaknorm2}
  \mathrm{dist}(u_k,\mathfrak{M})^2\geq c_k^2 \|\nabla_\mathbb{H}\mathfrak{g}_{\lambda_k,\xi_k}U\|^2_{L^2(\Omega^c)}\geq \widetilde{C}c_k^2 \lambda_k^{2-Q}.
\end{equation}
On the other hand, there holds
\begin{align}\label{weaknorm3}
  \|u_k\|_{L^{\frac{Q}{Q-2}}_w(\Omega)}\leq & \|u_k-c_k\mathfrak{g}_{\lambda_k,\xi_k}U\|_{L^{\frac{Q}{Q-2}}_w(\Omega)}+\|c_k\mathfrak{g}_{\lambda_k,\xi_k}U\|_{L^{\frac{Q}{Q-2}}_w(\Omega)}
 \nonumber \\ \leq & \|u_k-c_k\mathfrak{g}_{\lambda_k,\xi_k}U\|_{L^{\frac{Q}{Q-2}}(\Omega)}+\|c_k\mathfrak{g}_{\lambda_k,\xi_k}U\|_{L^{\frac{Q}{Q-2}}_w(\mathbb{H}^n)} \nonumber\\
  \leq & C \|u_k-c_k\mathfrak{g}_{\lambda_k,\xi_k}U\|_{L^{\frac{2Q}{Q-2}}(\mathbb{H}^n)}+Cc_k\lambda_k^{2-Q}\|U\|_{L^{\frac{Q}{Q-2}}_w(\mathbb{H}^n)} \nonumber\\
  \leq & C  \mathrm{dist}(u_k,\mathfrak{M})+Cc_k\lambda_k^{2-Q}.
\end{align}
Therefore, from \eqref{weaknorm1}-\eqref{weaknorm3}, we deduce that
\begin{equation*}
  \|u_k\|_{L^{\frac{Q}{Q-2}}_w(\Omega)}\leq C  \mathrm{dist}(u_k,\mathfrak{M}).
\end{equation*}
This with Theorem \ref{main thm0}  yields the desired contradiction, and the result follows.
\end{proof}

\section{Proof of Theorem \ref{main thm}}\label{sec5}
In this section, we will prove Theorem \ref{main thm} by adapting the strategy outlined in \cite{CFL25}.
Under the assumptions of Theorem~\ref{main thm}, and following an argument similar to Appendix A in Bahri and Coron \cite{BC88},  the smooth function
\begin{equation*}
G(\tilde{\lambda}_1, \ldots, \tilde{\lambda}_m, \tilde{\xi}_1, \ldots, \tilde{\xi}_m) = \left\| u - \sum_{i=1}^{m} \mathfrak{g}_{\tilde{\lambda}_i, \tilde{\xi}_i} U \right\|_{S^{1,2}(\mathbb{H}^{n})}
\end{equation*}
 attains its minimum at some point \((\lambda_1, \ldots, \lambda_m, \xi_1, \ldots, \xi_m)\) satisfying
\begin{equation*}
\frac{\lambda_i}{\tilde{\lambda}_i} = 1 + o_{\delta}(1),
\quad
\lambda_i \tilde{\lambda}_i |\xi_i^{-1}  \tilde{\xi}_i |^2 = o_{\delta}(1),
\quad \forall \, 1\leq i\leq m.
\end{equation*}
Set
\begin{equation*}
\sigma = \sum_{i=1}^{m} U_i,
\quad
\rho = u - \sigma.
\end{equation*}
By differentiating $G$, we derive the following orthogonality condition:
\begin{equation*}
\left\langle \rho, \frac{\partial \mathfrak{g}_{\lambda_i, \eta}U}{\partial \eta^{(a)}}\bigg|_{\eta=\xi_i} \right\rangle_{S^{1,2}(\mathbb{H}^{n})} = 0,
\quad
\left\langle \rho, \frac{\partial \mathfrak{g}_{r, \xi_i}U}{\partial r}\bigg|_{r=\lambda_i} \right\rangle_{S^{1,2}(\mathbb{H}^{n})} = 0,
\quad \forall\, 1\leq i\leq m,\ 1\leq a\leq 2n+1.
\end{equation*}
It can be verified that the sets
\begin{equation*}
  \left\{ \frac{\partial \mathfrak{g}_{\lambda_i, \eta}U}{\partial \eta^{(a)}}\bigg|_{\eta=\xi_i} \right\}_{1\leq a\leq 2n+1}
\bigcup
\left\{ \frac{\partial \mathfrak{g}_{r, \xi_i}U}{\partial r}\bigg|_{r=\lambda_i} \right\}
\end{equation*}
and
\begin{equation*}
  \{ Z_i^a \}_{1\leq a\leq 2n+2}
\end{equation*}
span the same subspace for each \( 1\leq i\leq m \), and
\begin{equation*}
	-\Delta_{\mathbb{H}} Z_i^a=Q^{\ast}_{\mu}\left(\int_{\mathbb{H}^{n}}\frac{|U_i(\eta)|^{Q^{\ast}_{\mu}-1}Z_i^a(\eta)}
{|\eta^{-1}\xi|^{\mu}}{d}\eta\right)|U_i|^{Q^{\ast}_{\mu}-2}U_i+(Q^{\ast}_{\mu}-1)
\left(\int_{\mathbb{H}^{n}}\frac{|U_i(\eta)|^{Q^{\ast}_{\mu}}}{|\eta^{-1}\xi|^{\mu}}{d}\eta\right)
|U_i|^{Q^{\ast}_{\mu}-2}Z_i^a
\end{equation*}
for any $1\leq i\leq m$ and $1\leq a\leq 2n+2$.
Consequently, we obtain
\begin{align}\label{orth}
Q^{\ast}_{\mu}\int_{\mathbb{H}^{n}}\int_{\mathbb{H}^{n}}\frac{U_i^{Q^{\ast}_{\mu}-1}Z_i^aU_i^{Q^{\ast}_{\mu}-1}\rho}
{|\eta^{-1}\xi|^{\mu}}d \xi{d}\eta +(Q^{\ast}_{\mu}-1)
\int_{\mathbb{H}^{n}}\int_{\mathbb{H}^{n}}\frac{U_i^{Q^{\ast}_{\mu}}U_i^{Q^{\ast}_{\mu}-2}Z_i^a\rho}
{|\eta^{-1}\xi|^{\mu}}d\xi{d}\eta=\langle \rho, Z_i^a \rangle_{S^{1,2}(\mathbb{H}^{n})}= 0
\end{align}
for any $1\leq i\leq m$ and $1\leq a\leq 2n+2$.

For $Q\geq 4$, $0<\mu< Q$ with $\mu\leq 4$. Let
\begin{equation}\label{key}
\begin{aligned}
L(\rho)= f+h +  N(\rho),
\end{aligned}
\end{equation}
where
\begin{equation*}
  L(\rho)=-\Delta_{\mathbb{H} } \rho-Q^{\ast}_{\mu}\left(\int_{\mathbb{H}^{n}}\frac{|\sigma(\eta)|^{Q^{\ast}_{\mu}-1}\rho(\eta)}
{|\eta^{-1}\xi|^{\mu}}{d}\eta\right)|\sigma|^{Q^{\ast}_{\mu}-2}\sigma-(Q^{\ast}_{\mu}-1)
\left(\int_{\mathbb{H}^{n}}\frac{|\sigma(\eta)|^{Q^{\ast}_{\mu}}}{|\eta^{-1}\xi|^{\mu}}{d}\eta\right)
|\sigma|^{Q^{\ast}_{\mu}-2}\rho,
\end{equation*}
\begin{equation*}
f=\left(\int_{\mathbb{H}^{n}}\frac{|\sigma(\eta)|^{Q^{\ast}_{\mu}}}
{|\eta^{-1}\xi|^{\mu}}{d}\eta\right)|\sigma|^{Q^{\ast}_{\mu}-2}\sigma-\sum\limits_{i=1}^m
\left(\int_{\mathbb{H}^{n}}\frac{|U_i(\eta)|^{Q^{\ast}_{\mu}}}
{|\eta^{-1}\xi|^{\mu}}{d}\eta\right)|U_i|^{Q^{\ast}_{\mu}-2}U_i,
\end{equation*}
\begin{equation*}
  h=-\Delta_{\mathbb{H} } u-\left(\int_{\mathbb{H}^{n}}\frac{|u(\eta)|^{Q^{\ast}_{\mu}}}
{|\eta^{-1}\xi|^{\mu}}{d}\eta\right)|u|^{Q^{\ast}_{\mu}-2}u,
\end{equation*}
 and
  \begin{align*}
  N(\rho)=&\left(\int_{\mathbb{H}^{n}}\frac{|u(\eta)|^{Q^{\ast}_{\mu}}}
{|\eta^{-1}\xi|^{\mu}}{d}\eta\right)|u|^{Q^{\ast}_{\mu}-2}u-\left(\int_{\mathbb{H}^{n}}\frac{|\sigma(\eta)|^{Q^{\ast}_{\mu}}}
{|\eta^{-1}\xi|^{\mu}}{d}\eta\right)|\sigma|^{Q^{\ast}_{\mu}-2}\sigma\\
&-Q^{\ast}_{\mu}\left(\int_{\mathbb{H}^{n}}\frac{|\sigma(\eta)|^{Q^{\ast}_{\mu}-1}\rho(\eta)}
{|\eta^{-1}\xi|^{\mu}}{d}\eta\right)|\sigma|^{Q^{\ast}_{\mu}-2}\sigma-(Q^{\ast}_{\mu}-1)
\left(\int_{\mathbb{H}^{n}}\frac{|\sigma(\eta)|^{Q^{\ast}_{\mu}}}{|\eta^{-1}\xi|^{\mu}}{d}\eta\right)
|\sigma|^{Q^{\ast}_{\mu}-2}\rho.
  \end{align*}
  Then we have the following lemmas.
  \begin{lemma}\label{main0}\cite[(2.9)-(2.10), (3.16), Lemma 2.3]{CFL25}
Let $Q\geq4$, $U_i$ and $U_j$ be two bubbles, we have:
 \\
 $\bullet$ For all $i\neq j$,  $\alpha,\beta\geq0$ with $\alpha+\beta=Q^*$, it holds that
\begin{equation}\label{main00}
  \int_{\mathbb{H}^{n}}U_i^\alpha U_j^\beta d\xi\approx
  \begin{cases}
  \varepsilon_{ij}^{\frac{\min\{\alpha,\beta\}(Q-2)}{2}}, \qquad & \mathrm{if} ~ \alpha\neq\beta, \vspace{.2cm}  \\
  \varepsilon_{ij}^{\frac{Q}{2}}|\log \varepsilon_{ij}|, \qquad & \mathrm{if} ~ \alpha=\beta.
  \end{cases}
\end{equation}
$\bullet$ For all $i\neq j$ with $\lambda_i \leq\lambda_j$, it holds that
\begin{equation}\label{main01}
  \int_{\mathbb{H}^{n}}U_j^{Q^*-2} U_i Z_j^{2n+2} d\xi\approx \varepsilon_{ij}^{\frac{Q-2}{2}}.
\end{equation}
$\bullet$ It holds that
\begin{equation}\label{main02}
 \left\| \sigma^{Q^{*}-1}-\sum\limits_{i=1}^m U_i^{Q^*-1}\right\|_{L^{\frac{2Q}{Q+2}}(\mathbb{H}^{n})} \lesssim
 \begin{cases}
 \varepsilon^{\frac{Q+2}{4}}, \qquad & \mathrm{if} ~ Q\geq8, \\
 \varepsilon, \qquad & \mathrm{if} ~ Q=4.
 \end{cases}
\end{equation}
$\bullet$ For all $i\neq j$ with $Q=6$, it holds that
\begin{equation}\label{main03}
 \|U_i U_j\|_{(S^{1,2}(\mathbb{H}^{n}))^{-1}} \lesssim \varepsilon_{ij}^2 |\log \varepsilon_{ij}|^{\frac{1}{2}}.
\end{equation}
$\bullet$ For each $1\leq i \leq m$, it holds that
\begin{equation}\label{main04}
 \left|\int_{\mathbb{H}^{n}}\sigma^{Q^{\ast}-2}\rho Z_i^{2n+2} d \xi\right| \lesssim\|\rho\|_{S^{1,2}(\mathbb{H}^{n})}^2
+o\Big(\varepsilon^{\frac{Q-2}{2}}\Big).
\end{equation}
  \end{lemma}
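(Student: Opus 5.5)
By left invariance and dilation invariance, every estimate reduces to a normalized configuration. Each $\mathfrak{g}_{\lambda,\xi}\in\mathcal{G}$ preserves $\int_{\mathbb{H}^n}|\cdot|^{Q^*}d\xi$, the products $U_i^\alpha U_j^\beta$ with $\alpha+\beta=Q^*$, and the dual norm. So by \eqref{rela} I would take $U_i=U$ and $U_j=\mathfrak{g}_{\lambda,\zeta}U$ with $\lambda\ge1$. Then $\varepsilon_{ij}\approx\min\{\lambda^{-1},\lambda^{-1}|\zeta|^{-2}\}$, or $\varepsilon_{ij}\approx(\lambda|\zeta|^2)^{-1}$ when $\lambda\approx1$. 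The only analytic input is the pointwise profile
$$U(\xi)\approx(1+|\xi|)^{2-Q},\qquad |Z^a|\lesssim U,$$
together with the fact that the Haar measure of $B_r$ is $\approx r^Q$. This lets every integral be computed in the style of the Euclidean bubble-interaction estimates.

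\textbf{Estimate \eqref{main00}.} I split $\mathbb{H}^n$ into three regions: the ball $B_{1/\sqrt{\lambda}}$-type neighbourhood of $\zeta$ of radius $r=\lambda^{-1}$ where $U_j$ is concentrated, the region where $U\approx1$, and the exterior where both bubbles decay like $|\xi|^{2-Q}$. In the concentration ball $U_i\approx U_i(\zeta)$, and $\int U_j^\beta$ over that ball contributes $\lambda^{-Q+\beta(Q-2)/2}$ times $U_i(\zeta)^\alpha$. In the intermediate region $U_j\approx\lambda^{-(Q-2)/2}|\zeta^{-1}\xi|^{2-Q}$, and integrating the resulting powers in homogeneous polar coordinates ($d\xi\approx r^{Q-1}dr$) gives a power of $r$. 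When $\alpha\ne\beta$ the smaller exponent $\min\{\alpha,\beta\}(Q-2)/2$ dominates. When $\alpha=\beta=Q/(Q-2)$ the radial integrand is exactly $r^{-1}$ on the annulus between the two scales, which produces the factor $|\log\varepsilon_{ij}|$.

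\textbf{Estimate \eqref{main01}.} Differentiating in $r$ the equation $-\Delta_{\mathbb{H}}\mathfrak{g}_{r,\xi}U=(\mathfrak{g}_{r,\xi}U)^{Q^*-1}$ gives $-\Delta_{\mathbb{H}}Z_j^{2n+2}=(Q^*-1)U_j^{Q^*-2}Z_j^{2n+2}$. The integral is then dominated by the concentration ball of the more concentrated bubble $U_j$, where $U_i\approx U_i(\xi_j)$. This gives
$$\int U_j^{Q^*-2}U_iZ_j^{2n+2}=U_i(\xi_j)\,\lambda_j^{-\frac{Q-2}{2}}\int U^{Q^*-2}Z^{2n+2}+\text{lower order},$$
and $U_i(\xi_j)\lambda_j^{-\frac{Q-2}{2}}\approx\varepsilon_{ij}^{\frac{Q-2}{2}}$. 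The key point, which I would check first, is that the constant $\int U^{Q^*-2}Z^{2n+2}=\frac{1}{Q^*-1}\frac{d}{dr}\big|_{r=1}\int(\mathfrak{g}_{r,0}U)^{Q^*-1}$ is nonzero. This holds because $\int(\mathfrak{g}_{r,0}U)^{Q^*-1}=r^{-\frac{Q-2}{2}}\int U^{Q^*-1}$. The error terms are bounded using \eqref{main00}.

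\textbf{Estimates \eqref{main02}--\eqref{main04}.} For \eqref{main02}, I use the pointwise inequality
$$\Big|\sigma^{Q^*-1}-\sum_i U_i^{Q^*-1}\Big|\lesssim\sum_{i\ne j}U_i^{Q^*-2}U_j\,\mathbf 1_{\{U_i\ge U_j\}}$$
(Lemma~\ref{gs}) and take the $L^{\frac{2Q}{Q+2}}$ norm, estimating $\int(U_i^{Q^*-2}U_j)^{\frac{2Q}{Q+2}}$ on $\{U_i\ge U_j\}$ as in \eqref{main00}.
\begin{itemize}
\item For $Q\ge8$ the exponents are unbalanced, giving $\varepsilon^{\frac{Q+2}{4}}$.
\item For $Q=4$ the term is $(U_i^2U_j)^{4/3}$, and restricting to $\{U_i\ge U_j\}$ removes the logarithm, leaving $\varepsilon$.
\end{itemize}

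For \eqref{main03}, with $Q=6$, I test against $\varphi$ with $\|\varphi\|_{S^{1,2}(\mathbb{H}^n)}\le1$. On the region $\{U_i\ge U_j\}$ I apply H\"older with $U_i^{\frac12}$ absorbed into a weight, and symmetrically on the complement. This is the main obstacle: the crude bound $\|U_iU_j\|_{L^{3/2}}$ only gives $\varepsilon^2|\log\varepsilon|^{2/3}$. To reach the sharp $|\log|^{1/2}$, I would try a weighted Hardy-type inequality $\int\varphi^2|\xi|^{-2}\lesssim\|\varphi\|^2_{S^{1,2}(\mathbb{H}^n)}$ on the annulus where the logarithm is generated, so that only an $L^2$-type (square-root) logarithm survives.

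For \eqref{main04}, the orthogonality \eqref{orth}, or its local analogue $\int U_i^{Q^*-2}\rho Z_i=0$, gives
$$\int\sigma^{Q^*-2}\rho Z_i=\int\big(\sigma^{Q^*-2}-U_i^{Q^*-2}\big)\rho Z_i .$$
I bound this by H\"older as $\|\rho\|_{L^{Q^*}}\,\big\|(\sigma^{Q^*-2}-U_i^{Q^*-2})Z_i\big\|_{L^{\frac{2Q}{Q+2}}}$. The second factor is an interaction quantity of order $\varepsilon^{\gamma}$ with $2\gamma>\frac{Q-2}{2}$. Young's inequality then gives $\|\rho\|^2_{S^{1,2}(\mathbb{H}^n)}+o(\varepsilon^{\frac{Q-2}{2}})$.
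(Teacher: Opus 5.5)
\textbf{Overall.} Your route is the standard one behind the estimates cited from \cite{CFL25}: normalize by $\mathfrak{g}_i^{-1}$, use $U\approx(1+|\xi|)^{2-Q}$, $|Z^a|\lesssim U$ and $d\xi\approx r^{Q-1}dr$, and split into regions. Your arguments for \eqref{main00} and \eqref{main04} are sound in outline. In particular, the local orthogonality $\int U_i^{Q^*-2}\rho Z_i^{2n+2}d\xi=0$ does hold: Jerison--Lee bubbles solve \eqref{limit1}, so $-\Delta_{\mathbb{H}}Z_i^{2n+2}=(Q^*-1)U_i^{Q^*-2}Z_i^{2n+2}$, and $\langle\rho,Z_i^{2n+2}\rangle_{S^{1,2}(\mathbb{H}^n)}=0$. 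However, two steps would fail as you justify them, and \eqref{main03} rests on an inequality you would still have to establish.

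\textbf{Gap in \eqref{main02}: your mechanism is backwards.} For $Q=4$ the integrand $(U_i^2U_j)^{4/3}=U_i^{8/3}U_j^{4/3}$ is unbalanced. So \eqref{main00} directly gives $\varepsilon^{4/3}$, hence the norm $\varepsilon$; there is no logarithm to remove. For $Q\geq 8$ the exponents $a=\frac{8Q}{(Q-2)(Q+2)}$ on $U_i$ and $b=\frac{2Q}{Q+2}$ on $U_j$ are also unbalanced. But \eqref{main00} then only yields $\varepsilon^{a(Q-2)/2}=\varepsilon^{\frac{4Q}{Q+2}}$, i.e.\ a norm of order $\varepsilon^2$, which is weaker than $\varepsilon^{\frac{Q+2}{4}}$ once $Q\geq 8$. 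The missing ingredient is a restricted estimate. Since $a<b$, on $\{U_i\geq U_j\}$ you can write $U_i^aU_j^b=(U_iU_j)^{Q^*/2}(U_j/U_i)^{b-Q^*/2}$. The extra factor decays like a power across the annulus that produces the logarithm in the balanced case. Hence $\int_{\{U_i\geq U_j\}}U_i^aU_j^b\,d\xi\approx\varepsilon^{Q/2}$ with no logarithm, which gives $\varepsilon^{\frac{Q+2}{4}}$.

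\textbf{The same issue in \eqref{main04}.} Your unverified claim $2\gamma>\frac{Q-2}{2}$ depends on this point. The crude bound $|\sigma^{Q^*-2}-U_i^{Q^*-2}|U_i\lesssim\sigma_{[i]}^{Q^*-2}U_i$ together with \eqref{main00} gives only $\gamma=2$, which fails for $Q\geq 10$. You need the bound $\min\{U_i^{Q^*-2}\sigma_{[i]},U_i\sigma_{[i]}^{Q^*-2}\}$ from Lemma \ref{gs} plus the restricted estimate, which gives $\gamma=\frac{Q+2}{4}$. For $Q=4$ and $Q=6$ one gets $\gamma=1$ and $\gamma=2$ (up to a logarithm), which suffice.

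\textbf{Gap in \eqref{main01}: the error term.} It cannot be bounded ``using \eqref{main00}''. Both $\int U_j^{Q^*-1}U_i\,d\xi$ and $U_i(\xi_j)\int U_j^{Q^*-1}d\xi$ are of order $\varepsilon_{ij}^{\frac{Q-2}{2}}$, the same as the main term, so \eqref{main00} gives no smallness. You must localize and use $\varepsilon_{ij}\to0$: split at a ball $B_r(\xi_j)$ with $\lambda_jr\to\infty$ on which $U_i=U_i(\xi_j)(1+o(1))$. For example, in the concentric normalization $U_i=U$, $U_j=\mathfrak{g}_{\lambda,0}U$, take $r=\lambda^{-1/2}$. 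Inside the ball, $|U(\xi)-U(0)|\lesssim|\xi|^2$ gives an error $O(\lambda^{-\frac{Q+2}{2}}\log\lambda)$. Outside, $\int_{B_r^c}U_j^{Q^*-1}d\xi\lesssim\lambda^{-\frac Q2}$. Both are $o(\lambda^{-\frac{Q-2}{2}})$.

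\textbf{Sign in \eqref{main01}.} Your own computation gives $\int U^{Q^*-2}Z^{2n+2}d\xi=-\frac{Q-2}{2(Q^*-1)}\int U^{Q^*-1}d\xi<0$. So what you actually prove is $-\int U_j^{Q^*-2}U_iZ_j^{2n+2}d\xi\approx\varepsilon_{ij}^{\frac{Q-2}{2}}$, and only for $\varepsilon_{ij}$ small; the integral vanishes, for instance, when the two bubbles coincide.

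\textbf{Justification needed in \eqref{main03}.} The Hardy-weight idea does give $\varepsilon^2|\log\varepsilon|^{1/2}$ if you centre the weight at the relevant bubble centre in each region. But the unweighted inequality $\int\varphi^2|\xi_0^{-1}\xi|^{-2}d\xi\lesssim\|\varphi\|^2_{S^{1,2}(\mathbb{H}^n)}$ is not the Garofalo--Lanconelli inequality, whose weight $|z|^2/|\xi|^2$ degenerates on the $t$-axis, so it needs its own justification. Alternatively, as in the proof of Lemma \ref{main2}, write $U_iU_j=-\Delta_{\mathbb{H}}w$ with $w$ given by the fundamental solution $c|\eta^{-1}\xi|^{2-Q}$. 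Then estimate $\|U_iU_j\|^2_{(S^{1,2}(\mathbb{H}^n))^{-1}}=\int wU_iU_j\,d\xi$ pointwise.
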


  \begin{lemma}\label{main1}
 Let $Q\geq 4$, $0<\mu< Q$ with $\mu\leq 4$, then
 \begin{equation*}
 \|N(\rho)\|_{(S^{1,2}(\mathbb{H}^{n}))^{-1}} \lesssim \|\rho\|_{S^{1,2}(\mathbb{H}^{n})}^{\min\{2,Q_\mu^*-1\}}.
 \end{equation*}
\end{lemma}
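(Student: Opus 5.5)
We prove Lemma \ref{main1} by duality. Fix $\varphi\in S^{1,2}(\mathbb{H}^n)$ with $\|\varphi\|_{S^{1,2}(\mathbb{H}^{n})}\le1$ and bound $\int_{\mathbb{H}^n}N(\rho)\varphi\,d\xi$. Throughout we use the notation
\begin{equation*}
I_\mu[f](\xi)=\int_{\mathbb{H}^{n}}\frac{f(\eta)}{|\eta^{-1}\xi|^{\mu}}d\eta,\qquad F(s)=|s|^{Q^*_\mu},\qquad F'(s)=Q^*_\mu|s|^{Q^*_\mu-2}s .
\end{equation*}
With $u=\sigma+\rho$ we have
\begin{equation*}
Q^*_\mu N(\rho)=I_\mu[F(u)]F'(u)-I_\mu[F(\sigma)]F'(\sigma)-I_\mu[F'(\sigma)\rho]F'(\sigma)-I_\mu[F(\sigma)]F''(\sigma)\rho .
\end{equation*}
We decompose this as $Q^*_\mu N(\rho)=N_1+N_2+N_3$, where
\begin{align*}
N_1&=I_\mu[F(\sigma)]\big(F'(u)-F'(\sigma)-F''(\sigma)\rho\big),\\
N_2&=I_\mu\big[F(u)-F(\sigma)-F'(\sigma)\rho\big]F'(u),\\
N_3&=I_\mu[F'(\sigma)\rho]\big(F'(u)-F'(\sigma)\big).
\end{align*}
This identity is checked directly by expanding. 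Each term is then paired with $\varphi$ and estimated by the HLS inequality \eqref{eq:HLSH} with $t=r=\frac{2Q}{2Q-\mu}$, followed by H\"older's inequality and the Folland--Stein inequality \eqref{folland-stein}. Throughout, $\|\sigma\|_{L^{Q^*}(\mathbb{H}^n)}\lesssim m$, and since $\|\rho\|_{S^{1,2}(\mathbb{H}^{n})}$ is small we may also use $\|u\|_{L^{Q^*}(\mathbb{H}^n)}\lesssim1$.

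The pointwise inputs come from Lemma \ref{gs}, applied with $a=|\sigma|$ (here $\sigma>0$) and $b=\rho$. Write $p=Q^*_\mu-1\ge1$, which holds because $\mu\le4$. For $p\ge2$ we have $|F'(u)-F'(\sigma)-F''(\sigma)\rho|\lesssim \sigma^{p-2}\rho^2+|\rho|^{p}$, and for $1\le p<2$ we have $\lesssim|\rho|^{p}$. Similarly,
\begin{equation*}
|F(u)-F(\sigma)-F'(\sigma)\rho|\lesssim \sigma^{Q^*_\mu-2}\rho^2+|\rho|^{Q^*_\mu},\qquad |F'(u)-F'(\sigma)|\lesssim \sigma^{p-1}|\rho|+|\rho|^{p}.
\end{equation*}

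For $N_1$, HLS gives
\begin{equation*}
\Big|\int N_1\varphi\Big|\lesssim \|F(\sigma)\|_{L^{\frac{2Q}{2Q-\mu}}}\,\big\|(\sigma^{p-2}\rho^2+|\rho|^p)\varphi\big\|_{L^{\frac{2Q}{2Q-\mu}}}.
\end{equation*}
Since $F(\sigma)=\sigma^{Q^*_\mu}$ and $Q^*_\mu\cdot\frac{2Q}{2Q-\mu}=Q^*$, we get $\|F(\sigma)\|_{L^{2Q/(2Q-\mu)}}=\|\sigma\|_{L^{Q^*}}^{Q^*_\mu}$. H\"older with exponents adding up to $Q^*$ then gives $\|\sigma^{p-2}\rho^2\varphi\|_{L^{2Q/(2Q-\mu)}}\le\|\sigma\|_{Q^*}^{p-2}\|\rho\|_{Q^*}^2\|\varphi\|_{Q^*}$, and likewise for the $|\rho|^p\varphi$ term. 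The terms $N_2$ and $N_3$ are handled the same way: $N_2$ produces $\|\rho\|^2+\|\rho\|^{Q^*_\mu}$, and $N_3$ produces $\|\rho\|\,(\|\rho\|+\|\rho\|^{p})$, using $\sigma^{p-1}\le1\cdot$ H\"older weights. Collecting these and using $\|\rho\|_{S^{1,2}}\le\delta<1$, every term is bounded by $\|\rho\|_{S^{1,2}(\mathbb{H}^{n})}^{q}$ with $q\ge\min\{2,p\}$. Hence the worst power is $\min\{2,Q^*_\mu-1\}$.

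The main obstacle is making sure that no power smaller than $\min\{2,Q^*_\mu-1\}$ appears. In particular, the linear-in-$\rho$ pieces must cancel exactly in the decomposition, and the regime $p<2$ has to be treated through the second ($\min$) branch of Lemma \ref{gs}. The requirement $p\ge1$, that is $\mu\le4$, is what makes the exponent $p-1\ge0$ in $N_3$ legitimate for the H\"older step.
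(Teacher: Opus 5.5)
Your proposal is correct and follows the paper's proof essentially verbatim. The paper uses exactly your splitting $Q^*_\mu N(\rho)=N_1+N_2+N_3$ (written there in terms of $(\sigma+\rho)^{Q^*_\mu}-\sigma^{Q^*_\mu}-Q^*_\mu\sigma^{Q^*_\mu-1}\rho$, $\sigma^{Q^*_\mu}[(\sigma+\rho)^{Q^*_\mu-1}-\sigma^{Q^*_\mu-1}-(Q^*_\mu-1)\sigma^{Q^*_\mu-2}\rho]$ and $\sigma^{Q^*_\mu-1}\rho[(\sigma+\rho)^{Q^*_\mu-1}-\sigma^{Q^*_\mu-1}]$), the pointwise bounds of Lemma \ref{gs} in the two regimes $Q^*_\mu\ge3$ and $2\le Q^*_\mu<3$, and then HLS with $t=r=\frac{2Q}{2Q-\mu}$, H\"older and Folland--Stein, relying as you do on the smallness of $\|\rho\|_{S^{1,2}(\mathbb{H}^n)}$ from the setting of Theorem \ref{main thm} to absorb the higher powers.
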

\begin{proof}
For any $\varphi\in S^{1,2}(\mathbb{H}^{n})$, since $Q_\mu^*\geq2$, by Lemma \ref{gs},  we have
\begin{align*}
 &\langle N(\rho),\varphi\rangle_{S^{1,2}(\mathbb{H}^{n}),(S^{1,2}(\mathbb{H}^{n}))^{-1}} =\int_{\mathbb{H}^{n}}N(\rho)\varphi d\xi \\ =&\int_{\mathbb{H}^{n}}\int_{\mathbb{H}^{n}}\frac{\big[(\sigma+\rho)^{Q_\mu^*}-\sigma^{Q_\mu^*}-Q^*_\mu \sigma^{Q^*_\mu-1}\rho\big](\sigma+\rho)^{Q^*_\mu-1}\varphi}{|\eta^{-1}\xi|^\mu}d\xi d\eta
  \\&+\displaystyle \int_{\mathbb{H}^{n}}\int_{\mathbb{H}^{n}}\frac{\sigma^{Q^*_\mu}\big[(\sigma+\rho)^{Q^*_\mu-1}-\sigma^{Q^*_\mu-1}-(Q^*_\mu-1)\sigma^{Q^*_\mu-2}\rho\big]\varphi}{|\eta^{-1}\xi|^\mu}
  d\xi d\eta\\
  &+Q^*_\mu\displaystyle\int_{\mathbb{H}^{n}}\int_{\mathbb{H}^{n}}\frac{\sigma^{Q^*_\mu-1}\rho
  \big[(\sigma+\rho)^{Q^*_\mu-1}-\sigma^{Q^*_\mu-1}\big]
  \varphi}{|\eta^{-1}\xi|^\mu}d\xi d\eta\\
  \lesssim& \int_{\mathbb{H}^{n}}\int_{\mathbb{H}^{n}}
  \frac{\big( \sigma^{Q^*_\mu-2}\rho^2+\rho^{Q_\mu^*}\big)(\sigma+\rho)^{Q^*_\mu-1}\varphi}{|\eta^{-1}\xi|^\mu}d\xi d\eta
 \\&+ \begin{cases}
             \displaystyle \int_{\mathbb{H}^{n}}\int_{\mathbb{H}^{n}}
             \frac{\sigma^{Q^*_\mu}\big(\sigma^{Q^*_\mu-3}\rho^2+\rho^{Q^*_\mu-1}\big)\varphi}{|\eta^{-1}\xi|^\mu}
  d\xi d\eta, \qquad & \mathrm{if} ~ Q_\mu^*\geq3, \\
              \displaystyle \int_{\mathbb{H}^{n}}\int_{\mathbb{H}^{n}}
             \frac{\sigma^{Q^*_\mu}\rho^{Q^*_\mu-1}\varphi}{|\eta^{-1}\xi|^\mu}
  d\xi d\eta, \qquad& \mathrm{if} ~ 2\leq Q_\mu^*<3,
           \end{cases}.\\
           &+\displaystyle\int_{\mathbb{H}^{n}}\int_{\mathbb{H}^{n}}\frac{\sigma^{Q^*_\mu-1}\rho
  \big(\sigma^{Q^*_\mu-2}\rho+\rho^{Q^*_\mu-1}\big)
  \varphi}{|\eta^{-1}\xi|^\mu}d\xi d\eta.
\end{align*}
Using \eqref{eq:HLSH}, the H\"{o}lder and Sobolev inequalities, we obtain
\begin{align*}
 \big|\langle N(\rho),\varphi\rangle_{S^{1,2}(\mathbb{H}^{n}),(S^{1,2}(\mathbb{H}^{n}))^{-1}} \big|\lesssim \|\rho\|_{S^{1,2}(\mathbb{H}^{n})}^{\min\{2,Q_\mu^*-1\}}\|\varphi\|_{S^{1,2}(\mathbb{H}^{n})}.
\end{align*}
This completes the proof.
\end{proof}

  \begin{lemma}\label{main2}
 Let $Q\geq 4$, $0<\mu< Q$ with $\mu\leq 4$, then
  \begin{equation*}
  \|f\|_{(S^{1,2}(\mathbb{H}^{n}))^{-1}}\lesssim
  \begin{cases}
             \varepsilon^{\frac{Q+2}{4}}, \qquad & \mathrm{if} ~ Q\geq8, \\
             \varepsilon^2|\log \varepsilon|^{\frac{1}{2}},\qquad & \mathrm{if} ~ Q=6,\\
              \varepsilon, \qquad& \mathrm{if} ~ Q=4.
           \end{cases}
  \end{equation*}
\end{lemma}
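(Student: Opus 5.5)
By duality we test $f$ against an arbitrary $\varphi\in S^{1,2}(\mathbb{H}^{n})$ with $\|\varphi\|_{S^{1,2}(\mathbb{H}^{n})}\le1$. We split $f$ into a ``local'' part and a ``convolution'' part. Write $\Phi_w:=\int_{\mathbb{H}^{n}}\frac{|w(\eta)|^{Q^*_\mu}}{|\eta^{-1}\xi|^{\mu}}d\eta$. Then
\begin{equation*}
f=\Phi_\sigma\Big(\sigma^{Q^*_\mu-1}-\sum_{i=1}^m U_i^{Q^*_\mu-1}\Big)+\sum_{i=1}^m\big(\Phi_\sigma-\Phi_{U_i}\big)U_i^{Q^*_\mu-1}=:f_1+f_2,
\end{equation*}
and each term is estimated separately.

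\textbf{Step 1: the term $f_2$.} Here $\Phi_\sigma-\Phi_{U_i}=\Phi\big(\sigma^{Q^*_\mu}-\sum_j U_j^{Q^*_\mu}\big)+\sum_{j\ne i}\Phi_{U_j}$. Pairing with $\varphi U_i^{Q^*_\mu-1}$ and applying \eqref{eq:HLSH} with $t=r=\frac{2Q}{2Q-\mu}$ gives a bound by $\|\sigma^{Q^*_\mu}-\sum_jU_j^{Q^*_\mu}\|_{L^{2Q/(2Q-\mu)}}$ plus the cross terms $\int\!\!\int U_j^{Q^*_\mu}(\xi)\,U_i^{Q^*_\mu-1}(\eta)\varphi(\eta)\,|\eta^{-1}\xi|^{-\mu}$. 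For the cross terms we use the fact that $\Phi_{U_j}=c\,U_j^{\frac{\mu}{Q-2}\cdot\frac{Q-2}{2}\cdot\frac{2}{Q-2}}$. Indeed, on $\mathbb{H}^n$ the Riesz-type potential of $V$ is explicit: $\Phi_{U}=c\,U^{\frac{\mu}{Q-2}}$, which is the Heisenberg analogue of $|x|^{-\mu}*U^{2^*_\mu}=cU^{\frac{\mu}{N-2}}$ and follows from the Frank--Lieb / Yang--Zhang classification together with \eqref{limit3} and \eqref{limit1}. The cross term therefore becomes $\int U_j^{\frac{\mu}{Q-2}}U_i^{Q^*_\mu-1}|\varphi|$, whose exponents sum to $Q^*-1$. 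By Hölder and Sobolev, together with \eqref{main00} applied to $U_i^{\alpha}U_j^{\beta}$ with $\alpha+\beta=Q^*$, this is $\lesssim\varepsilon_{ij}^{\gamma}$ for some explicit $\gamma$. The same pointwise identity turns the first piece into $\sigma^{Q^*_\mu}-\sum U_j^{Q^*_\mu}\lesssim\sum_{i\ne j}U_i^{Q^*_\mu-1}U_j$ (valid since $Q^*_\mu\ge2$, using Lemma \ref{gs}), whose $L^{2Q/(2Q-\mu)}$ norm is again bounded by interaction integrals and \eqref{main00}.

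\textbf{Step 2: the term $f_1$.} Since $\Phi_\sigma\lesssim\sum_j U_j^{\frac{\mu}{Q-2}}$ and $\sigma^{Q^*_\mu-1}-\sum U_i^{Q^*_\mu-1}\lesssim\sum_{i\ne j}U_i^{Q^*_\mu-2}U_j$ (or $\min$-type bounds when $Q^*_\mu-1<2$, again from Lemma \ref{gs}), we have $|f_1|\lesssim\sum_{i\ne j}\sigma^{Q^*-2}\min(U_i,U_j)$-type products of total homogeneity $Q^*-1=\frac{Q+2}{Q-2}$. These are exactly the terms appearing in the local problem $\sigma^{Q^*-1}-\sum U_i^{Q^*-1}$. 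We then bound $\|f_1\|_{L^{2Q/(Q+2)}}$ as in \eqref{main02}: this yields $\varepsilon^{\frac{Q+2}{4}}$ for $Q\ge8$ and $\varepsilon$ for $Q=4$. For $Q=6$ the dominant piece is $U_iU_j$, and \eqref{main03} gives $\varepsilon^2|\log\varepsilon|^{1/2}$ directly in the dual norm.

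\textbf{Main obstacle.} The hardest point is the exponent bookkeeping. We must show that every product $U_i^{a}U_j^{b}$ with $a+b=Q^*-1$ arising from the nonlocal structure, where the split of $(a,b)$ depends on $\mu$, has dual norm no worse than the rates in \eqref{main02}--\eqref{main03}. Products with the worse exponent $\min\{a,b\}$ are the danger. We plan to reduce each such product to the worst case $U_i^{Q^*-2}U_j$ (or $U_iU_j$ when $Q=6$) by the pointwise inequality $U_i^aU_j^b\le U_i^{Q^*-2}U_j+U_j^{Q^*-2}U_i$ for $1\le a,b$, and to treat separately the remaining case in which some exponent is below $1$, where $\mu$ close to $4$ makes $Q^*_\mu-1$ close to $1$.
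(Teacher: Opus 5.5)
\textbf{Main gap: the cross terms in Step 1.} The problem is the terms $\Phi_{U_j}U_i^{Q^*_\mu-1}\approx U_j^{\frac{\mu}{Q-2}}U_i^{Q^*_\mu-1}$ with $j\neq i$, where you leave $\gamma$ unspecified. H\"older and \eqref{main00} give $\big\|U_j^{\frac{\mu}{Q-2}}U_i^{Q^*_\mu-1}\big\|_{L^{\frac{2Q}{Q+2}}(\mathbb{H}^n)}\approx\varepsilon_{ij}^{\min\{\mu,Q+2-\mu\}/2}$, up to a logarithm when the two exponents coincide. So $\gamma=\frac{\mu}{2}$ for $\mu<\frac{Q+2}{2}$. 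This is worse than the claimed rate whenever $\mu<Q-2$: for every $\mu\le4$ if $Q\ge8$ (since $\frac{\mu}{2}\le2<\frac{Q+2}{4}$), for $\mu<4$ if $Q=6$, and for $\mu<2$ if $Q=4$.

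Your fallback reduction to $U_i^{Q^*-2}U_j+U_j^{Q^*-2}U_i$ requires both exponents to be at least $1$. The exponent that drops below $1$ is $\frac{\mu}{Q-2}=Q^*-Q^*_\mu$, coming from $\Phi_{U_j}$. It is not $Q^*_\mu-1=\frac{Q+2-\mu}{Q-2}$, which is $\ge1$ precisely because $\mu\le4$.

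No separate treatment can rescue this range, because the bound itself fails there. Take $U_2=U$ and $U_1=\mathfrak{g}_{\lambda,0}U$, so $\varepsilon=\lambda^{-1}$. Superadditivity of $s\mapsto s^{Q^*_\mu}$ and of $s\mapsto s^{Q^*_\mu-1}$ gives, pointwise, $f\ge\Phi_{U_2}\big(\sigma^{Q^*_\mu-1}-U_2^{Q^*_\mu-1}\big)\ge c\,U^{\frac{\mu}{Q-2}}U_1^{Q^*_\mu-1}\ge0$. Now test against $\lambda^{\frac{Q-2}{2}}\psi(\delta_\lambda\xi)$, with a fixed bump $0\le\psi\in C_c^\infty(B_1)$ and $\psi=1$ on $B_{1/2}$. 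This yields $\|f\|_{(S^{1,2}(\mathbb{H}^{n}))^{-1}}\gtrsim\lambda^{\frac{2Q-\mu}{2}}\cdot\lambda^{-Q}=\varepsilon^{\frac{\mu}{2}}$. The paper's proof passes over the same point. Its pointwise comparison $\sum_iU_i^{Q^*-Q^*_\mu}\big[\sigma^{Q^*_\mu-1}-U_i^{Q^*_\mu-1}\big]\lesssim\sigma^{Q^*-1}-\sum_iU_i^{Q^*-1}$, and its $Q=6$ analogue with $\sigma^2-\sum_iU_i^2$ on the right, is valid only when $\frac{\mu}{Q-2}\ge1$.

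\textbf{The range $\mu\ge Q-2$, and a second gap at $Q=6$.} This range is $Q=4$ with $\mu\in[2,4)$ (the only case used in Theorem \ref{main thm}), or $Q=6$ with $\mu=4$. Here all exponents are at least $1$, and your scheme closes for $Q=4$. The HLS piece gives $\varepsilon^{\frac{Q-2}{2}}=\varepsilon$. The cross terms give $\varepsilon^{\min\{\mu,6-\mu\}/2}\lesssim\varepsilon$, and the logarithm at $\mu=3$ is harmless. Finally, $f_1$ is controlled by \eqref{main02}.

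For $Q=6$, $\mu=4$ we have $Q^*_\mu=2$, so in fact $f_1\equiv0$, but a second gap remains. Estimating $\Phi(U_jU_l)U_i$ by HLS leads to $\|U_jU_l\|_{L^{3/2}(\mathbb{H}^n)}\approx\varepsilon^2|\log\varepsilon|^{\frac23}$, by the equal-exponent case of \eqref{main00}. This misses $\varepsilon^2|\log\varepsilon|^{\frac12}$. Moreover, \eqref{main03} as you invoke it only covers the local products $\Phi_{U_j}U_i\approx U_jU_i$.

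The missing idea, which the paper uses, is that for $Q=6$ the kernel $|\eta^{-1}\xi|^{-4}$ is a multiple of the fundamental solution of $-\Delta_{\mathbb{H}}$. Hence $\omega=\Phi(U_jU_l)$ solves $-\Delta_{\mathbb{H}}\omega=c\,U_jU_l$, and so $\|\omega\|_{S^{1,2}(\mathbb{H}^{n})}\lesssim\|U_jU_l\|_{(S^{1,2}(\mathbb{H}^{n}))^{-1}}$. H\"older with $U_i\in L^{3}(\mathbb{H}^n)=L^{Q^*}(\mathbb{H}^n)$ then gives $\|\omega U_i\|_{(S^{1,2}(\mathbb{H}^{n}))^{-1}}\lesssim\|\omega\|_{S^{1,2}(\mathbb{H}^{n})}$, after which \eqref{main03} applies.
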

\begin{proof}
For any $\varphi\in S^{1,2}(\mathbb{H}^{n})$, when $Q\geq8$ or $Q=4$, by \eqref{limit1}, \eqref{eq:HLSH}, \eqref{limit3}, \eqref{main00}, and \eqref{main02}, 
we have
\begin{align}\label{main20}
 &\langle f,\varphi\rangle_{S^{1,2}(\mathbb{H}^{n}),(S^{1,2}(\mathbb{H}^{n}))^{-1}} =\int_{\mathbb{H}^{n}}f\varphi d\xi \nonumber\\ =&\int_{\mathbb{H}^{n}}\int_{\mathbb{H}^{n}}\frac{\sigma^{Q^{\ast}_{\mu}}\sigma^{Q^{\ast}_{\mu}-1}\varphi}
{|\eta^{-1}\xi|^{\mu}}d\xi{d}\eta-\sum\limits_{i=1}^m
\int_{\mathbb{H}^{n}}\int_{\mathbb{H}^{n}}\frac{U_i^{Q^{\ast}_{\mu}}U_i^{Q^{\ast}_{\mu}-1}\varphi}
{|\eta^{-1}\xi|^{\mu}}d\xi{d}\eta \nonumber\\
=&\int_{\mathbb{H}^{n}}\int_{\mathbb{H}^{n}}\frac{\Big[\sigma^{Q^{\ast}_{\mu}}-\sum\limits_{i=1}^mU_i^{Q^{\ast}_{\mu}}\Big]\sigma^{Q^{\ast}_{\mu}-1}\varphi}
{|\eta^{-1}\xi|^{\mu}}d\xi{d}\eta+\sum\limits_{i=1}^m
\int_{\mathbb{H}^{n}}\int_{\mathbb{H}^{n}}\frac{U_i^{Q^{\ast}_{\mu}}\Big[\sigma^{Q^{\ast}_{\mu}-1}-U_i^{Q^{\ast}_{\mu}-1}\Big]\varphi}
{|\eta^{-1}\xi|^{\mu}}d\xi{d}\eta \nonumber\\
\lesssim &\int_{\mathbb{H}^{n}}\int_{\mathbb{H}^{n}}\frac{\Big[\sigma^{Q^{\ast}_{\mu}}-\sum\limits_{i=1}^mU_i^{Q^{\ast}_{\mu}}\Big]\sigma^{Q^{\ast}_{\mu}-1}\varphi}
{|\eta^{-1}\xi|^{\mu}}d\xi{d}\eta+\sum\limits_{i=1}^m
\int_{\mathbb{H}^{n}} U_i^{Q^*-Q^{\ast}_{\mu}}\Big[\sigma^{Q^{\ast}_{\mu}-1}-U_i^{Q^{\ast}_{\mu}-1}\Big]\varphi
d\xi \nonumber\\
\lesssim & \left(\int_{\mathbb{H}^{n}}\bigg[\sigma^{Q^{\ast}_{\mu}}-\sum\limits_{i=1}^mU_i^{Q^{\ast}_{\mu}}\bigg]^{\frac{2Q}{2Q-\mu}}d\xi\right)^{\frac{2Q-\mu}{2Q}}\|\varphi\|_{S^{1,2}(\mathbb{H}^{n})}
+\int_{\mathbb{H}^{n}} \bigg[\sigma^{Q^{*}-1}-\sum\limits_{i=1}^m U_i^{Q^*-1}\bigg]\varphi
d\xi \nonumber\\
\lesssim & \sum\limits_{j\neq i}\left(\int_{\mathbb{H}^{n}}U_i^{\frac{2Q(Q^{\ast}_{\mu}-1)}{2Q-\mu}}U_j
^{\frac{2Q}{2Q-\mu}}d\xi\right)^{\frac{2Q-\mu}{2Q}}\|\varphi\|_{S^{1,2}(\mathbb{H}^{n})}
+\bigg\| \sigma^{Q^{*}-1}-\sum\limits_{i=1}^m U_i^{Q^*-1}\bigg\|_{L^{\frac{2Q}{Q+2}}(\mathbb{H}^{n})}\|\varphi\|_{S^{1,2}(\mathbb{H}^{n})} \nonumber\\
\lesssim &
\begin{cases}
\varepsilon^{\frac{Q-2}{2}}|\log \varepsilon|^{\frac{Q-2}{Q}}\|\varphi\|_{S^{1,2}(\mathbb{H}^{n})} \quad & \mathrm{if} ~ \mu=4,\\
\varepsilon^{\frac{Q-2}{2}}\|\varphi\|_{S^{1,2}(\mathbb{H}^{n})} \quad & \mathrm{if} ~ \mu<4,
\end{cases}
+
\begin{cases}
 \varepsilon^{\frac{Q+2}{4}}\|\varphi\|_{S^{1,2}(\mathbb{H}^{n})}, \quad & \mathrm{if} ~ Q\geq8, \\
 \varepsilon \|\varphi\|_{S^{1,2}(\mathbb{H}^{n})}, \quad & \mathrm{if} ~ Q=4,
 \end{cases} \nonumber\\
 = &
 \begin{cases}
 \varepsilon^{\frac{Q+2}{4}}\|\varphi\|_{S^{1,2}(\mathbb{H}^{n})}, \quad & \mathrm{if} ~ Q\geq8, \\
 \varepsilon \|\varphi\|_{S^{1,2}(\mathbb{H}^{n})}, \quad & \mathrm{if} ~ Q=4.
 \end{cases}
\end{align}
When $Q=6$, by \eqref{limit1} and \eqref{limit3},  we have
\begin{align}\label{main21}
f=&\left(\int_{\mathbb{H}^{n}}\frac{|\sigma(\eta)|^{3-\frac{\mu}{4}}
}{|\eta^{-1}\xi|^{\mu}}{d}\eta\right)|\sigma|^{1-\frac{\mu}{4}}\sigma-\sum\limits_{i=1}^m
\left(\int_{\mathbb{H}^{n}}\frac{|U_i(\eta)|^{3-\frac{\mu}{4}}}
{|\eta^{-1}\xi|^{\mu}}{d}\eta\right)|U_i|^{1-\frac{\mu}{4}}U_i \nonumber\\
=&\left(\int_{\mathbb{H}^{n}}\frac{|\sigma(\eta)|^{3-\frac{\mu}{4}}
-\sum\limits_{i=1}^m|U_i(\eta)|^{3-\frac{\mu}{4}}}{|\eta^{-1}\xi|^{\mu}}{d}\eta\right)|\sigma|^{1-\frac{\mu}{4}}\sigma
+\sum\limits_{i=1}^m
\left(\int_{\mathbb{H}^{n}}\frac{|U_i(\eta)|^{3-\frac{\mu}{4}}}
{|\eta^{-1}\xi|^{\mu}}{d}\eta\right)\Big[|\sigma|^{1-\frac{\mu}{4}}\sigma-|U_i|^{1-\frac{\mu}{4}}U_i\Big]\nonumber\\
\lesssim &\left(\int_{\mathbb{H}^{n}}\frac{|\sigma(\eta)|^{3-\frac{\mu}{4}}
-\sum\limits_{i=1}^m|U_i(\eta)|^{3-\frac{\mu}{4}}}{|\eta^{-1}\xi|^{\mu}}{d}\eta\right)|\sigma|^{1-\frac{\mu}{4}}\sigma
+\sum\limits_{i=1}^m
|U_i|^{\frac{\mu}{4}}\Big[|\sigma|^{1-\frac{\mu}{4}}\sigma-|U_i|^{1-\frac{\mu}{4}}U_i\Big] \nonumber\\
\lesssim  &\left(\int_{\mathbb{H}^{n}}\frac{|\sigma(\eta)|^{3-\frac{\mu}{4}}
-\sum\limits_{i=1}^m|U_i(\eta)|^{3-\frac{\mu}{4}}}{|\eta^{-1}\xi|^{\mu}}{d}\eta\right)|\sigma|^{1-\frac{\mu}{4}}\sigma
+
\sigma^2-\sum\limits_{i=1}^mU_i^2 \nonumber\\
=  &\underbrace{\left(\int_{\mathbb{H}^{n}}\frac{|\sigma(\eta)|^{3-\frac{\mu}{4}}
-\sum\limits_{i=1}^m|U_i(\eta)|^{3-\frac{\mu}{4}}}{|\eta^{-1}\xi|^{\mu}}{d}\eta\right)|\sigma|^{1-\frac{\mu}{4}}\sigma
}_{:=f_1}+\underbrace{\sum\limits_{i\neq j} U_i U_j}_{:=f_2}.
\end{align}
By \eqref{main03}, we obtain
\begin{equation}\label{main22}
  \|f_2\|_{(S^{1,2}(\mathbb{H}^{n}))^{-1}}\lesssim \varepsilon^2|\log \varepsilon|^{\frac{1}{2}}.
\end{equation}
Similarly, when $\mu<4$, for any $\varphi\in S^{1,2}(\mathbb{H}^{n})$, using \eqref{eq:HLSH} and \eqref{main00} again, there holds
\begin{equation}\label{main23}
  \big|\langle f_1,\varphi\rangle_{S^{1,2}(\mathbb{H}^{n}),(S^{1,2}(\mathbb{H}^{n}))^{-1}}\big|\lesssim \varepsilon^{2}\|\varphi\|_{S^{1,2}(\mathbb{H}^{n})}\lesssim \varepsilon^{2}|\log \varepsilon|^{\frac{1}{2}}\|\varphi\|_{S^{1,2}(\mathbb{H}^{n})}.
\end{equation}
Moreover, when $\mu=4$,
\begin{align}\label{main24}
  f_1=&\left(\int_{\mathbb{H}^{n}}\frac{|\sigma(\eta)|^{2}
-\sum\limits_{i=1}^m|U_i(\eta)|^{2}}{|\eta^{-1}\xi|^{4}}{d}\eta\right)\sigma
=\sum\limits_{i=1}^m\underbrace{\left(\int_{\mathbb{H}^{n}}\frac{|\sigma(\eta)|^{2}
-\sum\limits_{i=1}^m|U_i(\eta)|^{2}}{|\eta^{-1}\xi|^{4}}{d}\eta\right)U_i}_{:=f_{1i}}.
\end{align}
For each $1\leq i\leq m$, by \eqref{limit1} and \eqref{limit3},
\begin{align}\label{main25}
  f_{1i}=&\sum\limits_{j\neq i}\left(\int_{\mathbb{H}^{n}}\frac{U_i (\eta) U_j(\eta)}{|\eta^{-1}\xi|^{4}}{d}\eta\right)U_i+\sum\limits_{j,l\neq i}\left(\int_{\mathbb{H}^{n}}\frac{U_j(\eta) U_l(\eta)}{|\eta^{-1}\xi|^{4}}{d}\eta\right)U_i \nonumber\\
  \lesssim &\sum\limits_{j\neq i}\left(\int_{\mathbb{H}^{n}}\frac{U_i (\eta) U_j(\eta)}{|\eta^{-1}\xi|^{4}}{d}\eta\right)U_i+\sum\limits_{j\neq i}\left(\int_{\mathbb{H}^{n}}\frac{U_j^2(\eta)}{|\eta^{-1}\xi|^{4}}{d}\eta\right)U_i+\sum\limits_{l\neq i}\left(\int_{\mathbb{H}^{n}}\frac{U_l^2(\eta)}{|\eta^{-1}\xi|^{4}}{d}\eta\right)U_i \nonumber\\
   \lesssim&\sum\limits_{j\neq i}\underbrace{\left(\int_{\mathbb{H}^{n}}\frac{U_i(\eta) U_j(\eta)}{|\eta^{-1}\xi|^{4}}{d}\eta\right)U_i}_{:=f_{ij}}+\underbrace{\sum\limits_{j\neq i}U_iU_j}_{=f_2}.
\end{align}
Let
\begin{equation*}
  \omega_1(\xi)=\int_{\mathbb{H}^{n}}\frac{U_i(\eta) U_j(\eta)}{|\eta^{-1}\xi|^{4}}{d}\eta,
\end{equation*}
and $\omega_2$ satisfy
\begin{equation*}
 -\Delta_{\mathbb{H}} \omega_2=f_{ij},
\end{equation*}
then
\begin{equation*}
  -\Delta_{\mathbb{H}}\omega_1=U_i U_j,\quad -\Delta_{\mathbb{H}}\omega_2=\omega_1U_i.
\end{equation*}
Using the H\"{o}lder and Sobolev inequalities, we get
\begin{align*}
  \|f_{ij}\|^2_{(S^{1,2}(\mathbb{H}^{n}))^{-1}}=\|\omega_2\|^2_{S^{1,2}(\mathbb{H}^{n})}=
  \int_{\mathbb{H}^{n}}\omega_1\omega_2U_id\xi\lesssim & \|\omega_1\|_{S^{1,2}(\mathbb{H}^{n})}\|\omega_2\|_{S^{1,2}(\mathbb{H}^{n})}\\
  =& \|U_i U_j\|_{(S^{1,2}(\mathbb{H}^{n}))^{-1}}\|\omega_2\|_{(S^{1,2}(\mathbb{H}^{n}))}.
\end{align*}
Therefore, for $j\neq i$,
\begin{equation}\label{main26}
  \|f_{ij}\|_{(S^{1,2}(\mathbb{H}^{n}))^{-1}}=\|\omega_2\|_{S^{1,2}(\mathbb{H}^{n})}\lesssim \|U_i U_j\|_{(S^{1,2}(\mathbb{H}^{n}))^{-1}}\lesssim \varepsilon^2|\log \varepsilon|^{\frac{1}{2}}.
\end{equation}
From \eqref{main20}-\eqref{main26}, the conclusion is reached.
\end{proof}

\begin{lemma}\label{main3}
Let $Q=4$ and $\mu\in (Q-2,4)$. For fixed $k$, we have
\begin{align*}
\int_{\mathbb{H}^n } f Z_k^{2n+2}  d\xi=&Q^{\ast}_{\mu} \sum_{i\neq k}\int_{\mathbb{H}^{n}}\int_{\mathbb{H}^{n}}
\frac{U_k^{Q^{\ast}_{\mu}-1}U_iU_k^{Q^{\ast}_{\mu}-1}Z_k^{2n+2}}
{|\eta^{-1}\xi|^{\mu}}d \xi{d}\eta \\
&+(Q^{\ast}_{\mu}-1)\sum_{i\neq k}
\int_{\mathbb{H}^{n}}\int_{\mathbb{H}^{n}}\frac{U_k^{Q^{\ast}_{\mu}}U_k^{Q^{\ast}_{\mu}-2}U_iZ_k^{2n+2}}
{|\eta^{-1}\xi|^{\mu}}d\xi{d}\eta
+o\Big(\varepsilon^{\frac{Q-2}{2}}\Big).
\end{align*}
\end{lemma}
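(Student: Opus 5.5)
We work directly with the expansion of $f$ tested against $Z_k^{2n+2}$, following the single-Sobolev case in \cite{CFL25}. Since $Q=4$ and $\mu\in(2,4)$, we have $Q^*_\mu=\frac{8-\mu}{2}\in(2,3)$ and $Q^*=4$. The aim is to show that $\int f Z_k^{2n+2}$ equals its first-order Taylor expansion of $\sigma$ around $U_k$, up to $o(\varepsilon^{\frac{Q-2}{2}})=o(\varepsilon)$.

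We split $\mathbb{H}^n$ into the regions $\Omega_i=\{U_i\geq U_j\ \forall j\}$ and write $\sigma=U_k+\sum_{i\ne k}U_i=:U_k+R_k$. We then decompose
\begin{align*}
\int f Z_k^{2n+2}d\xi=&\int\!\!\int\frac{\big[\sigma^{Q^*_\mu}-\sum_iU_i^{Q^*_\mu}\big](\xi)\,\sigma^{Q^*_\mu-1}Z_k^{2n+2}(\eta)}{|\eta^{-1}\xi|^\mu}d\xi d\eta\\
&+\sum_i\int\!\!\int\frac{U_i^{Q^*_\mu}(\xi)\big[\sigma^{Q^*_\mu-1}-U_i^{Q^*_\mu-1}\big]Z_k^{2n+2}(\eta)}{|\eta^{-1}\xi|^\mu}d\xi d\eta .
\end{align*}

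In the second sum, the term $i=k$ is handled with Lemma \ref{gs}, exponent $\beta=Q^*_\mu-1\in(1,2)$. It gives
\[
\sigma^{Q^*_\mu-1}-U_k^{Q^*_\mu-1}=(Q^*_\mu-1)U_k^{Q^*_\mu-2}R_k+O\big(U_k^{Q^*_\mu-3}R_k^2\wedge R_k^{Q^*_\mu-1}\big)
\]
on $\Omega_k$. On $\Omega_k^c$ we use the crude bound $\lesssim R_k^{Q^*_\mu-1}$ together with $|Z_k^{2n+2}|\lesssim U_k$. Since $\int U_k^{Q^*_\mu}|\eta^{-1}\xi|^{-\mu}d\xi=\alpha^{-1}U_k^{Q^*-Q^*_\mu}$ by \eqref{limit1}, \eqref{limit3}, every error reduces to a single integral of type $\int U_k^{\alpha}U_i^{\beta}$ with $\alpha+\beta=Q^*=4$ and $\beta>1$ (for example $\beta=Q^*_\mu-1$ near the $U_i$-dominant region, or $\beta=2$ on $\Omega_k$). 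By \eqref{main00} such an integral is $O(\varepsilon^{\min\{\alpha,\beta\}})$ or $O(\varepsilon^2|\log\varepsilon|)$, and this is $o(\varepsilon)$ because $\min\{\alpha,\beta\}>1$, using $Q^*_\mu-1>1$. The linear part reproduces exactly the second displayed sum in the statement. The terms $i\ne k$ involve $U_i^{Q^*_\mu}$ against differences, which are always of product type $U_iU_k$ weighted so that both exponents exceed $1$. By the same HLS/\eqref{main00} reasoning they are also $o(\varepsilon)$.

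For the first (nonlocal) term, we expand $\sigma^{Q^*_\mu}-\sum_iU_i^{Q^*_\mu}$ by Lemma \ref{gs}. The piece $Q^*_\mu U_k^{Q^*_\mu-1}\sum_{i\ne k}U_i$, paired with $U_k^{Q^*_\mu-1}Z_k^{2n+2}$ in $\eta$, gives the first sum of the statement. The remaining pieces are of three kinds:
\begin{itemize}
\item[(a)] $Q^*_\mu U_i^{Q^*_\mu-1}U_j$ for $i\ne k$;
\item[(b)] quadratic remainders;
\item[(c)] the replacement of $\sigma^{Q^*_\mu-1}$ by $U_k^{Q^*_\mu-1}$ in the $\eta$-factor.
\end{itemize}
Each is bounded by the HLS inequality \eqref{eq:HLSH} with $t=r=\frac{2Q}{2Q-\mu}$, as in \eqref{main20}. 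This gives products like
\[
\Big\|U_i^{Q^*_\mu-1}U_j\Big\|_{L^{\frac{2Q}{2Q-\mu}}}\,\Big\|U_k^{Q^*_\mu-2}U_iU_k\Big\|_{L^{\frac{2Q}{2Q-\mu}}},
\]
where each factor is $O(\varepsilon^{\frac{Q-2}{2}})$-small or $O(1)$. Whenever two interaction factors appear, the total is $O(\varepsilon^2)$ or better.

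The main obstacle is the single-interaction terms where $U_k$ sits in only one variable. A typical one is
\[
\int\!\!\int \frac{U_i^{Q^*_\mu-1}U_k(\xi)\,U_i^{Q^*_\mu-1}Z_k^{2n+2}(\eta)}{|\eta^{-1}\xi|^\mu},
\]
coming from (a) with $j=k$. Its integrand has only a weight $U_i^{Q^*_\mu-1}$ in one variable. HLS then only yields $\|U_i^{Q^*_\mu-1}U_k\|_{L^{\frac{2Q}{2Q-\mu}}}^2$, and by \eqref{main00} with exponents $\frac{2Q(Q^*_\mu-1)}{2Q-\mu}$ and $\frac{2Q}{2Q-\mu}$ this is of size $\varepsilon^{\frac{\mu}{2}}$. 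Here we use the hypothesis $\mu>Q-2=2$: it makes $\varepsilon^{\frac{\mu}{2}}=o(\varepsilon^{\frac{Q-2}{2}})$. This is exactly the restriction explained in the remark after Theorem \ref{main thm}, so we expect it to close the argument. We would verify carefully that the exponent produced by \eqref{main00} is indeed $\min\{\cdot\}\cdot\frac{Q-2}{2}=\frac{\mu}{2}$ and not smaller, with the log case not arising since $\mu<4$. If needed, we would first convert the Riesz potential of $U_i^{Q^*_\mu}$ to the local power $U_i^{Q^*-Q^*_\mu}$ via \eqref{limit3}.
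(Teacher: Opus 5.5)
Your splitting of $\int fZ_k^{2n+2}d\xi$ is exactly the paper's $I+II$: the bracket term plus $\sum_i\iint U_i^{Q^{\ast}_{\mu}}\big[\sigma^{Q^{\ast}_{\mu}-1}-U_i^{Q^{\ast}_{\mu}-1}\big]Z_k^{2n+2}|\eta^{-1}\xi|^{-\mu}$. Your tools are also the right ones. However, the error bookkeeping is wrong, and one step fails as written.

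The term you call the main obstacle is harmless. Since $\mu<4$, the smaller exponent in $\int U_i^{\frac{4(6-\mu)}{8-\mu}}U_k^{\frac{8}{8-\mu}}d\xi$ is $\frac{8}{8-\mu}$. So \eqref{main00} gives $\|U_i^{Q^{\ast}_{\mu}-1}U_k\|_{L^{\frac{8}{8-\mu}}}\approx\varepsilon$, and its square is $O(\varepsilon^{2})$, not $\varepsilon^{\frac{\mu}{2}}$. It is a double-interaction term of the same type as the paper's $I_2$ in \eqref{main34}, and it does not use $\mu>2$.

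The genuinely single-interaction pieces of the bracket term are your (a) paired with the leading $\eta$-factor $U_k^{Q^{\ast}_{\mu}-1}Z_k^{2n+2}$, for instance $j=k$. There \eqref{eq:HLSH} only gives $\|U_i^{Q^{\ast}_{\mu}-1}U_k\|_{L^{\frac{8}{8-\mu}}}\|U_k^{Q^{\ast}_{\mu}}\|_{L^{\frac{8}{8-\mu}}}\approx\varepsilon$, which is not $o(\varepsilon)$. So your claim that (a)--(c) are all closed by HLS is false. You must first integrate out $U_k^{Q^{\ast}_{\mu}}$ in $\eta$ via \eqref{limit3} (not $U_i^{Q^{\ast}_{\mu}}$). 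This yields $\int U_i^{3-\frac{\mu}{2}}U_k^{1+\frac{\mu}{2}}d\xi\approx\varepsilon^{3-\frac{\mu}{2}}=o(\varepsilon)$. The smallness here comes from $\mu<4$, and this is how the paper handles $I_1$.

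The hypothesis $\mu>Q-2$ is really used in the $i\neq k$ part of your second sum, which you dismiss in one sentence. There $U_i^{Q^{\ast}_{\mu}}$ sits alone in one variable. Where $U_k$ dominates, $\sigma^{Q^{\ast}_{\mu}-1}-U_i^{Q^{\ast}_{\mu}-1}\approx U_k^{Q^{\ast}_{\mu}-1}$ sits alone in the other, so HLS gives only $O(1)$. After \eqref{limit3}, the bad piece is $\int U_i^{\frac{\mu}{2}}\sigma_{[i]}^{Q^{\ast}_{\mu}-1}|Z_k^{2n+2}|d\xi\lesssim\int U_i^{\frac{\mu}{2}}\sigma_{[i]}^{4-\frac{\mu}{2}}d\xi\lesssim\varepsilon^{\frac{\mu}{2}}$. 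The exponent $\frac{\mu}{2}$ exceeds $1$ only because $\mu>2$. This is \eqref{main35}, the estimate the remark after Theorem \ref{main thm} refers to.

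This is not an artifact of crude bounds. Since $Q^{\ast}_{\mu}\int U^{Q^{\ast}_{\mu}-1}Z^{2n+2}d\xi=-\frac{\mu}{2}\int U^{Q^{\ast}_{\mu}}d\xi\neq0$, for concentric bubbles with $\lambda_k\gg\lambda_i$ this piece is of exact order $\varepsilon^{\frac{\mu}{2}}$.

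So the lemma is provable with your toolkit, but you need to redo the estimates in two places. In the bracket term, convert the potential of $U_k^{Q^{\ast}_{\mu}}$ before estimating. Then place the use of $\mu>2$ in the cross terms $U_i^{Q^{\ast}_{\mu}}\otimes U_k^{Q^{\ast}_{\mu}-1}Z_k^{2n+2}$ of $II$, not in the term you highlighted.
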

\begin{proof}
Denote $\sigma_{[k]}=\sum\limits_{i\neq k}U_i$. Then
\begin{align}\label{main30}
  &\int_{\mathbb{H}^{n}}fZ_k^{2n+2} d\xi \nonumber  \\=&\int_{\mathbb{H}^{n}}\int_{\mathbb{H}^{n}}\frac{\sigma^{Q^{\ast}_{\mu}}\sigma^{Q^{\ast}_{\mu}-1}Z_k^{2n+2}}
{|\eta^{-1}\xi|^{\mu}}d\xi{d}\eta-\sum\limits_{i=1}^m
\int_{\mathbb{H}^{n}}\int_{\mathbb{H}^{n}}\frac{U_i^{Q^{\ast}_{\mu}}U_i^{Q^{\ast}_{\mu}-1}Z_k^{2n+2}}
{|\eta^{-1}\xi|^{\mu}}d\xi{d}\eta \nonumber\\
=&\underbrace{\int_{\mathbb{H}^{n}}\int_{\mathbb{H}^{n}}\frac{\Big[\sigma^{Q^{\ast}_{\mu}}-\sum\limits_{i=1}^mU_i^{Q^{\ast}_{\mu}}\Big]\sigma^{Q^{\ast}_{\mu}-1}Z_k^{2n+2}}
{|\eta^{-1}\xi|^{\mu}}d\xi{d}\eta}_{:=I}+\underbrace{\sum\limits_{i=1}^m
\int_{\mathbb{H}^{n}}\int_{\mathbb{H}^{n}}\frac{U_i^{Q^{\ast}_{\mu}}\Big[\sigma^{Q^{\ast}_{\mu}-1}-U_i^{Q^{\ast}_{\mu}-1}\Big]Z_k^{2n+2}}
{|\eta^{-1}\xi|^{\mu}}d\xi{d}\eta}_{:=II}.
\end{align}
For $I$, we have
\begin{align}\label{main31}
  I=&\underbrace{\int_{\mathbb{H}^{n}}\int_{\mathbb{H}^{n}}\frac{
  \Big[\sigma^{Q^{\ast}_{\mu}}-\sum\limits_{i=1}^mU_i^{Q^{\ast}_{\mu}}-Q_\mu^*U_k^{Q^{\ast}_{\mu}-1}\sigma_{[k]}\Big]\sigma^{Q^{\ast}_{\mu}-1}Z_k^{2n+2}}
{|\eta^{-1}\xi|^{\mu}}d\xi{d}\eta}_{:=I_1} \nonumber\\
&+Q_\mu^*\underbrace{\int_{\mathbb{H}^{n}}\int_{\mathbb{H}^{n}}\frac{U_k^{Q^{\ast}_{\mu}-1}\sigma_{[k]}\Big[\sigma^{Q^{\ast}_{\mu}-1}-U_k^{Q^{\ast}_{\mu}-1}\Big]Z_k^{2n+2}}{|\eta^{-1}\xi|^{\mu}}d\xi{d}\eta}_{:=I_2}
\nonumber\\
&+Q_\mu^*\int_{\mathbb{H}^{n}}\int_{\mathbb{H}^{n}}\frac{U_k^{Q^{\ast}_{\mu}-1}\sigma_{[k]}U_k^{Q^{\ast}_{\mu}-1}Z_k^{2n+2}}{|\eta^{-1}\xi|^{\mu}}d\xi{d}\eta.
\end{align}
For $II$,  we have
\begin{align}\label{main32}
  II=&\sum\limits_{i\neq k}\int_{\mathbb{H}^{n}}\int_{\mathbb{H}^{n}}\frac{U_i^{Q^{\ast}_{\mu}}\Big[\sigma^{Q^{\ast}_{\mu}-1}-U_i^{Q^{\ast}_{\mu}-1}\Big]Z_k^{2n+2}}
{|\eta^{-1}\xi|^{\mu}}d\xi{d}\eta+\int_{\mathbb{H}^{n}}\int_{\mathbb{H}^{n}}\frac{U_k^{Q^{\ast}_{\mu}}\Big[\sigma^{Q^{\ast}_{\mu}-1}-U_k^{Q^{\ast}_{\mu}-1}\Big]Z_k^{2n+2}}
{|\eta^{-1}\xi|^{\mu}}d\xi{d}\eta \nonumber\\
=&\underbrace{\sum\limits_{i\neq k}\int_{\mathbb{H}^{n}}\int_{\mathbb{H}^{n}}\frac{U_i^{Q^{\ast}_{\mu}}\Big[\sigma^{Q^{\ast}_{\mu}-1}-U_i^{Q^{\ast}_{\mu}-1}\Big]Z_k^{2n+2}}
{|\eta^{-1}\xi|^{\mu}}d\xi{d}\eta}_{:=II_{1}} \nonumber\\
&+\underbrace{\int_{\mathbb{H}^{n}}\int_{\mathbb{H}^{n}}\frac{U_k^{Q^{\ast}_{\mu}}
\Big[\sigma^{Q^{\ast}_{\mu}-1}-U_k^{Q^{\ast}_{\mu}-1}-(Q^{\ast}_{\mu}-1)U_k^{Q^{\ast}_{\mu}-2}\sigma_{[k]}\Big]Z_k^{2n+2}}
{|\eta^{-1}\xi|^{\mu}}d\xi{d}\eta}_{:=II_2} \nonumber\\
&+(Q^{\ast}_{\mu}-1)\int_{\mathbb{H}^{n}}\int_{\mathbb{H}^{n}}
\frac{U_k^{Q^{\ast}_{\mu}}U_k^{Q^{\ast}_{\mu}-2}\sigma_{[k]}Z_k^{2n+2}}
{|\eta^{-1}\xi|^{\mu}}d\xi{d}\eta.
\end{align}
In the following, we estimate $I_1$, $I_2$ and $II_1$, $II_2$. Firstly, by \eqref{limit1}, \eqref{limit3}, \eqref{main00}, and $\big|Z_k^{2n+2}\big| \lesssim U_k$, for some $\theta>0$ small enough, we have
\begin{align}\label{main33}
  |I_1|\lesssim& \int_{\mathbb{H}^{n}}\int_{\mathbb{H}^{n}}\frac{
  \Big[U_k^{Q^{\ast}_{\mu}-2}\sigma^2_{[k]}\chi_{\{U_k\geq \sigma_{[k]}\}}+\sum\limits_{i\neq k}U_i^{Q^{\ast}_{\mu}-1}\sigma_{[i]}\chi_{\{U_k< \sigma_{[k]}\}}\Big]\sigma^{Q^{\ast}_{\mu}-1}Z_k^{2n+2}}
{|\eta^{-1}\xi|^{\mu}}d\xi{d}\eta \nonumber\\
\lesssim& \int_{\mathbb{H}^{n}}\int_{\mathbb{H}^{n}}\frac{
U_k^{Q^{\ast}_{\mu}}U_k^{Q^{\ast}_{\mu}-2}\sigma^2_{[k]}\chi_{\{U_k\geq \sigma_{[k]}\}}}
{|\eta^{-1}\xi|^{\mu}}d\xi{d}\eta+\sum\limits_{i\neq k}\int_{\mathbb{H}^{n}}\int_{\mathbb{H}^{n}}\frac{
  U_k^{Q^{\ast}_{\mu}}U_i^{Q^{\ast}_{\mu}-1}\sigma_{[i]}
  }
{|\eta^{-1}\xi|^{\mu}}d\xi{d}\eta \nonumber\\
\lesssim& \int_{\mathbb{H}^{n}}U_k^{Q^{\ast}-2}\sigma^2_{[k]}\chi_{\{U_k\geq \sigma_{[k]}\}}d\xi +\sum\limits_{i\neq k}\int_{\mathbb{H}^{n}}U_k^{Q^{\ast}-Q_\mu^*}U_i^{Q^{\ast}_{\mu}-1}\sigma_{[i]}d\xi \nonumber\\
\lesssim& \int_{\mathbb{H}^{n}}U_k^{Q^{\ast}-1-\theta}\sigma^{1+\theta}_{[k]} d\xi +
\int_{\mathbb{H}^{n}}\sigma_{[i]}^{Q^{\ast}-Q_\mu^*+1}U_i^{Q^{\ast}_{\mu}-1}d\xi=o\Big(\varepsilon^{\frac{Q-2}{2}}\Big).
\end{align}
Using Lemma \ref{gs}, \eqref{eq:HLSH} and \eqref{main00}, we deduce that
\begin{align}\label{main34}
 |I_2| \lesssim & \int_{\mathbb{H}^{n}}\int_{\mathbb{H}^{n}}\frac{U_k^{Q^{\ast}_{\mu}-1}\sigma_{[k]}\Big[U_k^{Q^{\ast}_{\mu}-2}\sigma_{[k]}
+\sigma_{[k]}^{Q^{\ast}_{\mu}-1}\Big]Z_k^{2n+2}}{|\eta^{-1}\xi|^{\mu}}d\xi{d}\eta \nonumber\\
\lesssim&\int_{\mathbb{H}^{n}}\int_{\mathbb{H}^{n}}\frac{U_k^{Q^{\ast}_{\mu}-1}\sigma_{[k]}U_k^{Q^{\ast}_{\mu}-1}\sigma_{[k]}
}{|\eta^{-1}\xi|^{\mu}}d\xi{d}\eta+\int_{\mathbb{H}^{n}}\int_{\mathbb{H}^{n}}\frac{U_k^{Q^{\ast}_{\mu}-1}\sigma_{[k]}
\sigma_{[k]}^{Q^{\ast}_{\mu}-1}U_k}{|\eta^{-1}\xi|^{\mu}}d\xi{d}\eta \nonumber\\
\lesssim&\left(\int_{\mathbb{H}^{n}}\bigg(U_k^{Q^{\ast}_{\mu}-1}\sigma_{[k]}\bigg)^{\frac{2Q}{2Q-\mu}} d\xi\right)^{\frac{2Q-\mu}{Q}} \nonumber\\
&+\left(\int_{\mathbb{H}^{n}}\bigg(U_k^{Q^{\ast}_{\mu}-1}\sigma_{[k]}\bigg)^{\frac{2Q}{2Q-\mu}} d\xi\right)^{\frac{2Q-\mu}{2Q}}\left(\int_{\mathbb{H}^{n}}\bigg(\sigma_{[k]}^{Q^{\ast}_{\mu}-1}U_k\bigg)^{\frac{2Q}{2Q-\mu}} d\xi\right)^{\frac{2Q-\mu}{2Q}} \nonumber\\
=&O\big(\varepsilon^{Q-2}\big)=o\Big(\varepsilon^{\frac{Q-2}{2}}\Big).
\end{align}
Similarly,
\begin{align}\label{main35}
 |II_1| \lesssim & \sum\limits_{i\neq k} \int_{\mathbb{H}^{n}}\int_{\mathbb{H}^{n}}\frac{U_i^{Q^{\ast}_{\mu}}\Big[U_i^{Q^{\ast}_{\mu}-2}\sigma_{[i]}
\chi_{\{U_i\geq \sigma_{[i]}\}}
+\sigma_{[i]}^{Q^{\ast}_{\mu}-1}\chi_{\{U_i< \sigma_{[i]}\}}\Big]Z_k^{2n+2}}{|\eta^{-1}\xi|^{\mu}}d\xi{d}\eta \nonumber\\
\lesssim& \sum\limits_{i\neq k}\int_{\mathbb{H}^{n}}\int_{\mathbb{H}^{n}}\frac{U_i^{Q^{\ast}_{\mu}}U_i^{Q^{\ast}_{\mu}-2}U_k\sigma_{[i]}
\chi_{\{U_i\geq \sigma_{[i]}\}}
}{|\eta^{-1}\xi|^{\mu}}d\xi{d}\eta+\sum\limits_{i\neq k}\int_{\mathbb{H}^{n}}\int_{\mathbb{H}^{n}}\frac{U_i^{Q^{\ast}_{\mu}}
\sigma_{[i]}^{Q^{\ast}_{\mu}-1}U_k}{|\eta^{-1}\xi|^{\mu}}d\xi{d}\eta \nonumber\\
\lesssim&\sum\limits_{i\neq k} \int_{\mathbb{H}^{n}}U_i^{Q^{\ast}-2}U_k\sigma_{[i]}\chi_{\{U_i\geq \sigma_{[i]}\}}d\xi +\sum\limits_{i\neq k}\int_{\mathbb{H}^{n}}U_i^{Q^{\ast}-Q_\mu^*}\sigma_{[i]}^{Q^{\ast}_{\mu}-1}U_k
d\xi \nonumber\\
\lesssim&
\int_{\mathbb{H}^{n}}U_i^{Q^{\ast}-1-\theta}\sigma^{1+\theta}_{[i]}d\xi +
\int_{\mathbb{H}^{n}}U_i^{Q^{\ast}-Q_\mu^*}\sigma_{[i]}^{Q^{\ast}_{\mu}}d\xi \nonumber\\
=&o\Big(\varepsilon^{\frac{Q-2}{2}}\Big)+O\big(\varepsilon^{\frac{\mu}{2}}\big)=o\Big(\varepsilon^{\frac{Q-2}{2}}\Big),
\end{align}
and
\begin{align}\label{main36}
  |II_2| \lesssim&
  \int_{\mathbb{H}^{n}}\int_{\mathbb{H}^{n}}\frac{U_k^{Q^{\ast}_{\mu}}
\sigma_{[k]}^{Q^{\ast}_{\mu}-1}Z_k^{2n+2}}
{|\eta^{-1}\xi|^{\mu}}d\xi{d}\eta\lesssim
  \int_{\mathbb{H}^{n}}\int_{\mathbb{H}^{n}}\frac{U_k^{Q^{\ast}_{\mu}}U_k
\sigma_{[k]}^{Q^{\ast}_{\mu}-1}}
{|\eta^{-1}\xi|^{\mu}}d\xi{d}\eta
\nonumber\\ \lesssim&\int_{\mathbb{H}^{n}}U_k^{Q^{\ast}-Q_\mu^*+1}\sigma_{[k]}^{Q^{\ast}_{\mu}-1}d\xi=o\Big(\varepsilon^{\frac{Q-2}{2}}\Big),
\end{align}
where we have used the fact that $\mu>Q-2$. Combining \eqref{main30}-\eqref{main36}, we complete the proof.
\end{proof}
Thus, combining Lemma \ref{main3} with \eqref{limit1}, \eqref{limit3}, we have
\begin{equation}\label{main37}
  \int_{\mathbb{H}^n } f Z_k^{2n+2}  d\xi\approx \sum_{i\neq k}\int U_k^{Q^*-2}U_i Z_k^{2n+2} d\xi +o\Big(\varepsilon^{\frac{Q-2}{2}}\Big).
\end{equation}

\begin{lemma}\label{main4}
Let $Q=4$ and $\mu\in (Q-2,4)$, then
\begin{equation*}
     \varepsilon^{\frac{Q-2}{2}} \lesssim  \|\rho\|_{S^{1,2}(\mathbb{H}^{n})}^{\min\{2,Q_\mu^*-1\}}
     +\|h\|_{(S^{1,2}(\mathbb{H}^{n}))^{-1}}.
   \end{equation*}
\end{lemma}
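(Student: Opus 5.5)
We test the equation \eqref{key} against $Z_k^{2n+2}$, where $k$ is the index attaining $\varepsilon=\varepsilon_{kl}$ for some $l$, chosen so that $\lambda_k\geq\lambda_l$. (Relabel if necessary so that $k$ is the more concentrated bubble of the maximal pair.) This yields
\begin{equation*}
\int_{\mathbb{H}^n} f Z_k^{2n+2}\,d\xi=\int_{\mathbb{H}^n} L(\rho)Z_k^{2n+2}\,d\xi-\int_{\mathbb{H}^n} hZ_k^{2n+2}\,d\xi-\int_{\mathbb{H}^n} N(\rho)Z_k^{2n+2}\,d\xi .
\end{equation*}
The left side is handled by \eqref{main37}. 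Since $Z_k^{2n+2}\geq$ a positive multiple of $U_k$ near the bubble core, and $U_i>0$, each term $\int U_k^{Q^*-2}U_iZ_k^{2n+2}$ is bounded below by a constant times $\varepsilon_{ik}^{\frac{Q-2}{2}}$ when $\lambda_i\leq\lambda_k$, by \eqref{main01}. When $\lambda_i>\lambda_k$ the term is nonnegative or of lower order; we would check the sign via the same scaling computation behind \eqref{main01}, and otherwise absorb it into $O(\varepsilon_{ik}^{\frac{Q-2}{2}})$ with a good sign. Keeping only $i=l$ then gives a lower bound $\gtrsim\varepsilon^{\frac{Q-2}{2}}-o(\varepsilon^{\frac{Q-2}{2}})$.

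For the right side, $|\int hZ_k^{2n+2}|\leq\|h\|_{(S^{1,2})^{-1}}\|Z_k^{2n+2}\|_{S^{1,2}}\lesssim\|h\|_{(S^{1,2})^{-1}}$. Similarly, Lemma \ref{main1} gives $|\int N(\rho)Z_k^{2n+2}|\lesssim\|\rho\|^{\min\{2,Q_\mu^*-1\}}_{S^{1,2}}$.

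The main term is $\int L(\rho)Z_k^{2n+2}$. Using the linearized equation satisfied by $Z_k^{2n+2}$ (stated before \eqref{orth}), we write $\int L(\rho)Z_k^{2n+2}$ as the orthogonality expression \eqref{orth}, which vanishes, plus differences obtained by replacing $U_k$ with $\sigma$ in the two nonlocal terms:
\begin{equation*}
Q^*_\mu\iint\frac{\big[\sigma^{Q^*_\mu-1}\rho\,\sigma^{Q^*_\mu-1}-U_k^{Q^*_\mu-1}\rho\, U_k^{Q^*_\mu-1}\big]Z_k^{2n+2}}{|\eta^{-1}\xi|^\mu}+(Q^*_\mu-1)\iint\frac{\big[\sigma^{Q^*_\mu}\sigma^{Q^*_\mu-2}-U_k^{Q^*_\mu}U_k^{Q^*_\mu-2}\big]\rho Z_k^{2n+2}}{|\eta^{-1}\xi|^\mu}.
\end{equation*}
We expand $\sigma=U_k+\sigma_{[k]}$ and use Lemma \ref{gs}, \eqref{eq:HLSH}, $|Z_k^{2n+2}|\lesssim U_k$, and the collapse identity $\int U_k^{Q^*_\mu}|\eta^{-1}\xi|^{-\mu}\,d\eta\approx U_k^{Q^*-Q^*_\mu}$ from \eqref{limit1} and \eqref{limit3}. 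Every piece then becomes either a local term like $\int\sigma^{Q^*-2}\rho Z_k^{2n+2}-\int U_k^{Q^*-2}\rho Z_k^{2n+2}$, controlled as in \eqref{main04} by $\|\rho\|^2+o(\varepsilon^{\frac{Q-2}{2}})$, or a cross term bounded by $\|\rho\|_{S^{1,2}}$ times an interaction integral. A typical cross term is $\|\rho\|(\int (U_k^{Q^*_\mu-1}\sigma_{[k]})^{\frac{2Q}{2Q-\mu}})^{\frac{2Q-\mu}{2Q}}$. By \eqref{main00} these integrals are $O(\varepsilon^{\frac{Q-2}{2}})$ or $O(\varepsilon^{\mu/2})$, and so are $o(1)\cdot\varepsilon^{\frac{Q-2}{2}}$ or small multiples absorbable by Young's inequality: $\|\rho\|\varepsilon^{\frac{Q-2}{2}}\leq\theta\varepsilon^{\frac{Q-2}{2}}+C_\theta\|\rho\|\cdots$. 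More precisely, each such cross term is $\|\rho\|\cdot O(\varepsilon^{\frac{Q-2}{2}})$ with $\|\rho\|<\delta$ small, hence $o_\delta(1)\varepsilon^{\frac{Q-2}{2}}$.

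The main obstacle is exactly this step. We must ensure that all cross terms carry either a factor $\|\rho\|^2$ or an $o(\varepsilon^{\frac{Q-2}{2}})$. This is where $\mu>Q-2$, giving $\varepsilon^{\mu/2}=o(\varepsilon^{\frac{Q-2}{2}})$, and $Q_\mu^*\geq2$ enter, in the same way they did in \eqref{main35}–\eqref{main36}. Once this is done, combining all the bounds gives $\varepsilon^{\frac{Q-2}{2}}(1-o_\delta(1))\lesssim\|\rho\|^{\min\{2,Q^*_\mu-1\}}+\|h\|_{(S^{1,2})^{-1}}$, which is the claim for $\delta$ small.
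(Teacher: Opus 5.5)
There is a genuine gap, and it is in the lower bound for $\int f Z_k^{2n+2}$, not in the step you flagged. Your treatment of the right-hand side is essentially the paper's: rewriting $\int L(\rho)Z_k^{2n+2}$ through \eqref{orth}, bounding the differences by $\|\rho\|^2$, $\|\rho\|^{Q^*_\mu-1}$ and $o(\varepsilon^{\frac{Q-2}{2}})$ using $\mu>Q-2$ and \eqref{main04}, and handling $h$ and $N(\rho)$ by duality and Lemma \ref{main1}. Absorbing $\|\rho\|\,\varepsilon^{\frac{Q-2}{2}}$ via $\|\rho\|<\delta$ instead of Young's inequality is harmless.

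The problem is the bubbles more concentrated than $U_k$. For $\lambda_i>\lambda_k$, the term $\int U_k^{Q^*-2}U_iZ_k^{2n+2}$ is neither nonnegative nor of lower order. By \eqref{main00} it is bounded by $\int U_k^{Q^*-1}U_i\approx\varepsilon_{ik}^{\frac{Q-2}{2}}$, and that size is attained with either sign. To see this, normalize $U_k=U$ and let $\lambda_i\to\infty$ with $\xi_i$ fixed, so $\lambda_i^{-1}\approx\varepsilon_{ik}$. Away from $\xi_i$ one has $U_i\approx c\,\lambda_i^{-\frac{Q-2}{2}}|\xi_i^{-1}\xi|^{2-Q}$. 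Using the Green representation of $Z^{2n+2}$ (recall $-\Delta_{\mathbb{H}}Z^{2n+2}=(Q^*-1)U^{Q^*-2}Z^{2n+2}$), the term is comparable to $\lambda_i^{-\frac{Q-2}{2}}Z^{2n+2}(\xi_i)$. Since $Z^{2n+2}>0$ near $0$ but $Z^{2n+2}<0$ at $t=0$, $|z|>1$, the sign is genuinely indefinite, so "checking the sign" cannot succeed.

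Choosing $k$ as the more concentrated member of a maximal pair does not exclude a third bubble with $\lambda_i\gg\lambda_k\gg\lambda_l$ and $\varepsilon_{ik}$ comparable to, but not exceeding, $\varepsilon=\varepsilon_{kl}$. Its possibly negative contribution, of size about $\varepsilon^{\frac{Q-2}{2}}$, can cancel the positive one from $U_l$, because \eqref{main00} and \eqref{main01} give only two-sided bounds with unspecified constants. So a single test against $Z_k^{2n+2}$ yields no lower bound on $\varepsilon$.

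The missing idea is the ordering and induction used in the paper. Order $\lambda_1\geq\cdots\geq\lambda_m$ and set $\varepsilon_j=\max_{i\neq j}\varepsilon_{ij}$.
\begin{itemize}
\item Test with $Z_1^{2n+2}$. Every other bubble is flatter, so by \eqref{main01} and \eqref{main37} all interaction terms are positive, and
$\varepsilon_1^{\frac{Q-2}{2}}\lesssim\|\rho\|_{S^{1,2}(\mathbb{H}^{n})}^{\min\{2,Q^*_\mu-1\}}+\|h\|_{(S^{1,2}(\mathbb{H}^{n}))^{-1}}+o(\varepsilon^{\frac{Q-2}{2}})$.
\item Test with $Z_j^{2n+2}$. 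The terms of indefinite sign come only from $i<j$, and each is bounded by $\varepsilon_{ij}^{\frac{Q-2}{2}}\leq\varepsilon_i^{\frac{Q-2}{2}}$, which was controlled at an earlier step. The terms with $i>j$ are positive and control $\max_{i>j}\varepsilon_{ij}^{\frac{Q-2}{2}}$.
\end{itemize}
Inducting on $j$ and using $\varepsilon=\max_j\varepsilon_j$, the $o(\varepsilon^{\frac{Q-2}{2}})$ error is absorbed into the left side, which gives the lemma.
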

\begin{proof}
Testing the equation \eqref{key} with $Z_i^{2n+2}$, by \eqref{orth}, we obtain
\begin{align}\label{main40}
&\left| \int_{\mathbb{H}^n} f Z_i^{2n+2} d\xi\right| \nonumber\\
\lesssim & \left|Q^{\ast}_{\mu}\int_{\mathbb{H}^{n}}\int_{\mathbb{H}^{n}}
\frac{\sigma^{Q^{\ast}_{\mu}-1}\rho \sigma^{Q^{\ast}_{\mu}-1} Z_i^{2n+2}}
{|\eta^{-1}\xi|^{\mu}} d\xi{d}\eta+(Q^{\ast}_{\mu}-1)
\int_{\mathbb{H}^{n}}\int_{\mathbb{H}^{n}}\frac{\sigma^{Q^{\ast}_{\mu}}
\sigma^{Q^{\ast}_{\mu}-2}\rho Z_i^{2n+2}}{|\eta^{-1}\xi|^{\mu}}{d}\xi d\eta
\right|
\nonumber\\&
+ \|h\|_{(S^{1,2}(\mathbb{H}^{n}))^{-1}}
+ \left|\int_{\mathbb{H}^n} N(\rho) Z_i^{2n+2}  d\xi \right| \nonumber\\
\lesssim&\left|{\int_{\mathbb{H}^{n}}\int_{\mathbb{H}^{n}}
\frac{\sigma^{Q^{\ast}_{\mu}-1}\rho \sigma^{Q^{\ast}_{\mu}-1} Z_i^{2n+2}-U_i^{Q^{\ast}_{\mu}-1}\rho U_i^{Q^{\ast}_{\mu}-1} Z_i^{2n+2}}
{|\eta^{-1}\xi|^{\mu}} d\xi{d}\eta}\right| \nonumber\\
&+\left|{
\int_{\mathbb{H}^{n}}\int_{\mathbb{H}^{n}}\frac{\sigma^{Q^{\ast}_{\mu}}
\sigma^{Q^{\ast}_{\mu}-2}\rho Z_i^{2n+2}-U_i^{Q^{\ast}_{\mu}}
U_i^{Q^{\ast}_{\mu}-2}\rho Z_i^{2n+2}}{|\eta^{-1}\xi|^{\mu}}{d}\xi d\eta
}\right|
\nonumber\\&
+ \|h\|_{(S^{1,2}(\mathbb{H}^{n}))^{-1}}
+ {\left|\int_{\mathbb{H}^n} N(\rho) Z_i^{2n+2}  d\xi \right|} \nonumber\\
=&\left|\underbrace{\int_{\mathbb{H}^{n}}\int_{\mathbb{H}^{n}}
\frac{\Big[\sigma^{Q^{\ast}_{\mu}-1}-U_i^{Q^{\ast}_{\mu}-1}\Big]\rho \sigma^{Q^{\ast}_{\mu}-1} Z_i^{2n+2}}
{|\eta^{-1}\xi|^{\mu}} d\xi{d}\eta}_{:=I}+\underbrace{\int_{\mathbb{H}^{n}}\int_{\mathbb{H}^{n}}
\frac{U_i^{Q^{\ast}_{\mu}-1}\rho \Big[\sigma^{Q^{\ast}_{\mu}-1}-U_i^{Q^{\ast}_{\mu}-1}\Big] Z_i^{2n+2}}
{|\eta^{-1}\xi|^{\mu}} d\xi{d}\eta}_{:=II}\right|\nonumber\\
&+\left|\underbrace{\int_{\mathbb{H}^{n}}\int_{\mathbb{H}^{n}}
\frac{\Big[\sigma^{Q^{\ast}_{\mu}}-U_i^{Q^{\ast}_{\mu}}\Big] \sigma^{Q^{\ast}_{\mu}-2}\rho Z_i^{2n+2}}
{|\eta^{-1}\xi|^{\mu}} d\xi{d}\eta}_{:=III}+\underbrace{\int_{\mathbb{H}^{n}}\int_{\mathbb{H}^{n}}
\frac{U_i^{Q^{\ast}_{\mu}} \Big[\sigma^{Q^{\ast}_{\mu}-2}-U_i^{Q^{\ast}_{\mu}-2}\Big]\rho Z_i^{2n+2}}
{|\eta^{-1}\xi|^{\mu}} d\xi{d}\eta}_{:=IV}\right|\nonumber\\
&+ \|h\|_{(S^{1,2}(\mathbb{H}^{n}))^{-1}}
+ \left|\underbrace{\int_{\mathbb{H}^n} N(\rho) Z_i^{2n+2}  d\xi }_{:=V}\right|.
\end{align}
By Lemma \ref{main1} and $\big|Z_i^{2n+2}\big| \lesssim U_i$, we obtain
\begin{equation}\label{main41}
  |V|\lesssim \|\rho\|_{S^{1,2}(\mathbb{H}^{n})}^{\min\{2,Q_\mu^*-1\}}.
\end{equation}
In the following, we estimate $I$-$IV$.
 For $I$, by Lemma \ref{gs}, \eqref{limit1}, \eqref{limit3}, and \eqref{main00}, for some $\theta>0$ small enough, we have
\begin{align}\label{main42}
  |I|\lesssim&\int_{\mathbb{H}^{n}}\int_{\mathbb{H}^{n}}
\frac{\Big[U_i^{Q^{\ast}_{\mu}-2}\sigma_{[i]}
\chi_{\{U_i\geq \sigma_{[i]}\}}+\sigma_{[i]}^{Q^{\ast}_{\mu}-1}\chi_{\{U_i< \sigma_{[i]}\}}\Big]\rho \sigma^{Q^{\ast}_{\mu}-1} Z_i^{2n+2}}
{|\eta^{-1}\xi|^{\mu}} d\xi{d}\eta \nonumber\\
\lesssim&\int_{\mathbb{H}^{n}}\int_{\mathbb{H}^{n}}
\frac{U_i^{Q^{\ast}_{\mu}} U_i^{Q^{\ast}_{\mu}-2}\sigma_{[i]}\rho
\chi_{\{U_i\geq \sigma_{[i]}\}} }
{|\eta^{-1}\xi|^{\mu}} d\xi{d}\eta+\int_{\mathbb{H}^{n}}\int_{\mathbb{H}^{n}}
\frac{U_i^{Q^{\ast}_{\mu}} \sigma_{[i]}^{Q^{\ast}_{\mu}-1}\rho
\chi_{\{U_i< \sigma_{[i]}\}} }
{|\eta^{-1}\xi|^{\mu}} d\xi{d}\eta \nonumber\\
\lesssim&\int_{\mathbb{H}^{n}}U_i^{Q^{\ast}-2}\sigma_{[i]}\rho
\chi_{\{U_i\geq \sigma_{[i]}\}}d\xi+\int_{\mathbb{H}^{n}}U_i^{Q^*-Q_\mu^{\ast}}\sigma_{[i]}^{Q^{\ast}_{\mu}-1}\rho
\chi_{\{U_i< \sigma_{[i]}\}}d\xi \nonumber\\
=&\int_{\{|\rho|\geq \sigma_{[i]}\}}U_i^{Q^{\ast}-2}\sigma_{[i]}\rho
\chi_{\{U_i\geq \sigma_{[i]}\}}d\xi+\int_{\{|\rho|< \sigma_{[i]}\}}U_i^{Q^{\ast}-2}\sigma_{[i]}\rho
\chi_{\{U_i\geq \sigma_{[i]}\}}d\xi \nonumber\\
&+\int_{\{|\rho|\geq \sigma_{[i]}\}}U_i^{Q^*-Q_\mu^{\ast}}\sigma_{[i]}^{Q^{\ast}_{\mu}-1}\rho
\chi_{\{U_i< \sigma_{[i]}\}}d\xi+\int_{\{|\rho|< \sigma_{[i]}\}}U_i^{Q^*-Q_\mu^{\ast}}\sigma_{[i]}^{Q^{\ast}_{\mu}-1}\rho
\chi_{\{U_i< \sigma_{[i]}\}}d\xi \nonumber\\
\lesssim&\int_{\mathbb{H}^{n}}U_i^{Q^{\ast}-2}\rho^2
d\xi+\int_{\mathbb{H}^{n}}U_i^{Q^{\ast}-1-\theta}\sigma^{1+\theta}_{[i]}
d\xi+\int_{\mathbb{H}^{n}}U_i^{Q^*-Q_\mu^{\ast}}\rho^{Q^{\ast}_{\mu}}
d\xi+\int_{\mathbb{H}^{n}}U_i^{Q^*-Q_\mu^{\ast}}\sigma_{[i]}^{Q^{\ast}_{\mu}}
d\xi \nonumber\\
\lesssim& \|\rho\|_{S^{1,2}(\mathbb{H}^{n})}^2
+o\Big(\varepsilon^{\frac{Q-2}{2}}\Big)+O\big(\varepsilon^{\frac{\mu}{2}}\big)=\|\rho\|_{S^{1,2}(\mathbb{H}^{n})}^2
+o\Big(\varepsilon^{\frac{Q-2}{2}}\Big),
\end{align}
where we have used the fact that $\mu>Q-2$.
For $II$, by \eqref{eq:HLSH} and \eqref{main00}, we have
\begin{align}\label{main43}
  |II|
\lesssim&\int_{\mathbb{H}^{n}}\int_{\mathbb{H}^{n}}
\frac{ U_i^{Q^{\ast}_{\mu}-1}\rho\Big[U_i^{Q^{\ast}_{\mu}-2}\sigma_{[i]}
\chi_{\{U_i\geq \sigma_{[i]}\}}+\sigma_{[i]}^{Q^{\ast}_{\mu}-1}\chi_{\{U_i< \sigma_{[i]}\}}\Big] Z_i^{2n+2}}
{|\eta^{-1}\xi|^{\mu}} d\xi{d}\eta \nonumber\\
\lesssim&\int_{\mathbb{H}^{n}}\int_{\mathbb{H}^{n}}
\frac{U_i^{Q^{\ast}_{\mu}-1}\rho U_i^{Q^{\ast}_{\mu}-1}\sigma_{[i]}
}
{|\eta^{-1}\xi|^{\mu}} d\xi{d}\eta+\int_{\mathbb{H}^{n}}\int_{\mathbb{H}^{n}}
\frac{U_i^{Q^{\ast}_{\mu}-1}\rho \sigma_{[i]}^{Q^{\ast}_{\mu}-1}U_i
}
{|\eta^{-1}\xi|^{\mu}} d\xi{d}\eta \nonumber\\
\lesssim&\left(\int_{\mathbb{H}^{n}}\bigg(U_i^{Q^{\ast}_{\mu}-1}\rho\bigg)^{\frac{2Q}{2Q-\mu}} d\xi\right)^{\frac{2Q-\mu}{2Q}}\left(\int_{\mathbb{H}^{n}}\bigg(U_i^{Q^{\ast}_{\mu}-1}\sigma_{[i]}\bigg)^{\frac{2Q}{2Q-\mu}} 
 d\xi\right)^{\frac{2Q-\mu}{2Q}} \nonumber\\
&+\left(\int_{\mathbb{H}^{n}}\bigg(U_i^{Q^{\ast}_{\mu}-1}\rho\bigg)^{\frac{2Q}{2Q-\mu}} d\xi\right)^{\frac{2Q-\mu}{2Q}}\left(\int_{\mathbb{H}^{n}}\bigg(\sigma_{[i]}^{Q^{\ast}_{\mu}-1}U_i\bigg)^{\frac{2Q}{2Q-\mu}} 
d\xi\right)^{\frac{2Q-\mu}{2Q}} \nonumber\\
\lesssim& \varepsilon^{\frac{Q-2}{2}}\|\rho\|_{S^{1,2}(\mathbb{H}^{n})}   \lesssim \|\rho\|^2_{S^{1,2}(\mathbb{H}^{n})}+\varepsilon^{{Q-2}}=\|\rho\|^2_{S^{1,2}(\mathbb{H}^{n})}
+o\Big(\varepsilon^{\frac{Q-2}{2}}\Big).
\end{align}
Similarly,
\begin{align}\label{main44}
  |III|\lesssim&\int_{\mathbb{H}^{n}}\int_{\mathbb{H}^{n}}
\frac{\Big[U_i^{Q^{\ast}_{\mu}-1}\sigma_{[i]}
\chi_{\{U_i\geq \sigma_{[i]}\}}+\sigma_{[i]}^{Q^{\ast}_{\mu}}\chi_{\{U_i< \sigma_{[i]}\}}\Big] \sigma^{Q^{\ast}_{\mu}-2} \rho Z_i^{2n+2}}
{|\eta^{-1}\xi|^{\mu}} d\xi{d}\eta \nonumber\\
\lesssim&\int_{\mathbb{H}^{n}}\int_{\mathbb{H}^{n}}
\frac{\sigma^{Q^{\ast}_{\mu}-1}\rho U_i^{Q^{\ast}_{\mu}-1}\sigma_{[i]}
}
{|\eta^{-1}\xi|^{\mu}} d\xi{d}\eta+\int_{\mathbb{H}^{n}}\int_{\mathbb{H}^{n}}
\frac{ \sigma_{[i]}^{Q^{\ast}_{\mu}} \sigma^{Q^{\ast}_{\mu}-2}\rho Z_i^{2n+2}
}
{|\eta^{-1}\xi|^{\mu}} d\xi{d}\eta \nonumber\\
\lesssim&\left(\int_{\mathbb{H}^{n}}\bigg(\sigma^{Q^{\ast}_{\mu}-1}\rho\bigg)^{\frac{2Q}{2Q-\mu}} d\xi\right)^{\frac{2Q-\mu}{2Q}}\left(\int_{\mathbb{H}^{n}}\bigg(U_i^{Q^{\ast}_{\mu}-1}\sigma_{[i]}\bigg)^{\frac{2Q}{2Q-\mu}} 
d\xi\right)^{\frac{2Q-\mu}{2Q}} \nonumber\\
&+\sum\limits_{j\neq i}\int_{\mathbb{H}^{n}}\int_{\mathbb{H}^{n}}
\frac{U_j^{Q^{\ast}_{\mu}}\sigma^{Q^{\ast}_{\mu}-2}\rho Z_i^{2n+2}
}
{|\eta^{-1}\xi|^{\mu}} d\xi{d}\eta
 \nonumber\\ \lesssim& \varepsilon^{\frac{Q-2}{2}}\|\rho\|_{S^{1,2}(\mathbb{H}^{n})}+\sum\limits_{j\neq i}\int_{\mathbb{H}^{n}}U_j^{Q^*-Q^{\ast}_{\mu}}\sigma^{Q^{\ast}_{\mu}-2}\rho Z_i^{2n+2} d \xi  \nonumber\\
\lesssim&
\|\rho\|_{S^{1,2}(\mathbb{H}^{n})}^2
+o\Big(\varepsilon^{\frac{Q-2}{2}}\Big)+\int_{\mathbb{H}^{n}}\sigma^{Q^{\ast}-2}\rho Z_i^{2n+2} d \xi\overset{\eqref{main04}}= \|\rho\|_{S^{1,2}(\mathbb{H}^{n})}^2
+o\Big(\varepsilon^{\frac{Q-2}{2}}\Big),
\end{align}
and
\begin{align}\label{main45}
  |IV| \lesssim&
  \int_{\mathbb{H}^{n}}\int_{\mathbb{H}^{n}}\frac{U_i^{Q^{\ast}_{\mu}}
\sigma_{[i]}^{Q^{\ast}_{\mu}-2}\rho Z_i^{2n+2}}
{|\eta^{-1}\xi|^{\mu}}d\xi{d}\eta\lesssim
  \int_{\mathbb{H}^{n}}\int_{\mathbb{H}^{n}}\frac{U_i^{Q^{\ast}_{\mu}}U_i \rho
\sigma_{[i]}^{Q^{\ast}_{\mu}-2}}
{|\eta^{-1}\xi|^{\mu}}d\xi{d}\eta
\nonumber\\
\lesssim&\int_{\mathbb{H}^{n}}U_i^{Q^{\ast}-Q_\mu^*+1}\rho\sigma_{[i]}^{Q^{\ast}_{\mu}-2}\chi_{\{|\rho|\geq \sigma_{[i]}\}}d\xi+\int_{\mathbb{H}^{n}}U_i^{Q^{\ast}-Q_\mu^*+1}\rho\sigma_{[i]}^{Q^{\ast}_{\mu}-2}\chi_{\{|\rho|< \sigma_{[i]}\}}d\xi \nonumber\\
\lesssim&
\int_{\mathbb{H}^{n}}U_i^{Q^{\ast}-Q_\mu^*+1}\rho^{Q^{\ast}_{\mu}-1}d\xi+\int_{\mathbb{H}^{n}}U_i^{Q^{\ast}-Q_\mu^*+1}\sigma_{[i]}^{Q^{\ast}_{\mu}-1}
d\xi \nonumber\\
\lesssim&
\|\rho\|_{S^{1,2}(\mathbb{H}^{n})}^{Q^{\ast}_{\mu}-1}+o\Big(\varepsilon^{\frac{Q-2}{2}}\Big).
\end{align}
According to \eqref{main40}-\eqref{main45}, we deduce that
 \begin{equation}\label{main46}
   \left| \int_{\mathbb{H}^n} f Z_i^{2n+2} d\xi\right|
\lesssim \|\rho\|_{S^{1,2}(\mathbb{H}^{n})}^{\min\{2,Q_\mu^*-1\}}
     +\|h\|_{(S^{1,2}(\mathbb{H}^{n}))^{-1}}+o\Big(\varepsilon^{\frac{Q-2}{2}}\Big).
 \end{equation}

 Without loss of generality, we assume that $\lambda_1\geq \lambda_2 \geq\cdots \geq \lambda_m$. For a fixed $i_0$, define
 \begin{equation*}
   \varepsilon_{i_0}=\max\limits_{j\neq i_0}\{\varepsilon_{i_0j}\}.
 \end{equation*}
 Then, by \eqref{main01} and \eqref{main37}, setting $i=1$ in \eqref{main46}, we obtain
 \begin{equation}\label{main47}
   \varepsilon_1^{\frac{Q-2}{2}}
\lesssim \|\rho\|_{S^{1,2}(\mathbb{H}^{n})}^{\min\{2,Q_\mu^*-1\}}
     +\|h\|_{(S^{1,2}(\mathbb{H}^{n}))^{-1}}+o\Big(\varepsilon^{\frac{Q-2}{2}}\Big).
 \end{equation}
 Next, taking $i=2$ in \eqref{main46}, we deduce from \eqref{main00}, \eqref{main01}, \eqref{main37}, and \eqref{main47} that
 \begin{align*}
   \varepsilon_2^{\frac{Q-2}{2}}\approx  \sum_{i\geq3}
     \int U_2^{Q^*-2}U_i Z_2^{2n+2}d\xi  \lesssim & \left|\int U_2^{Q^*-2}U_1 Z_2^{2n+2}d\xi\right|+\|\rho\|_{S^{1,2}(\mathbb{H}^{n})}^{\min\{2,Q_\mu^*-1\}}
     +\|h\|_{(S^{1,2}(\mathbb{H}^{n}))^{-1}}+o\Big(\varepsilon^{\frac{Q-2}{2}}\Big) \\
     \lesssim &
     ~ \varepsilon_1^{\frac{Q-2}{2}} +\|\rho\|_{S^{1,2}(\mathbb{H}^{n})}^{\min\{2,Q_\mu^*-1\}}
     +\|h\|_{(S^{1,2}(\mathbb{H}^{n}))^{-1}}+o\Big(\varepsilon^{\frac{Q-2}{2}}\Big)\\
     \lesssim & \|\rho\|_{S^{1,2}(\mathbb{H}^{n})}^{\min\{2,Q_\mu^*-1\}}
     +\|h\|_{(S^{1,2}(\mathbb{H}^{n}))^{-1}}+o\Big(\varepsilon^{\frac{Q-2}{2}}\Big).
 \end{align*}
By induction, the desired result follows.
\end{proof}

\begin{lemma}\label{main5}
Let $Q\geq 4$, $0<\mu< Q$ with $0<\mu< 4$. Then there exists a constant $\hat{\delta}=\hat{\delta}(n,m)> 0$ such that for any $\delta\in (0,\hat{\delta})$, for any collection of ${\delta}$-weakly interacting bubbles $\big\{\mathfrak{g}_{i } U\big\}_{i=1}^m$, if $\rho$ satisfies 
\begin{equation*}
  L(\rho) = \varphi, \quad \langle \rho, Z_i^a \rangle_{S^{1,2}(\mathbb{H}^{n})}= 0 ,\quad\forall\, 1\leq i\leq m,\ 1\leq a\leq 2n+2,
\end{equation*}
then 
\begin{equation*}
 \|L(\rho) \|_{(S^{1,2}(\mathbb{H}^{n}))^{-1}} \gtrsim \|\rho\|_{_{S^{1,2}(\mathbb{H}^{n})}}.
\end{equation*}
\end{lemma}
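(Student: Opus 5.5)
We argue by contradiction and combine a compactness argument with the spectral information in Lemma \ref{eigen}. Suppose the estimate fails. Then there are $\delta_k\to0$, $\delta_k$-weakly interacting families $\{U_i^{(k)}\}_{i=1}^m$, and functions $\rho_k$ with $\|\rho_k\|_{S^{1,2}(\mathbb{H}^{n})}=1$, $\langle\rho_k,Z_i^{a,(k)}\rangle_{S^{1,2}(\mathbb{H}^{n})}=0$ for all $i,a$, and $\|L_k(\rho_k)\|_{(S^{1,2}(\mathbb{H}^{n}))^{-1}}\to0$. Testing $L_k(\rho_k)$ against $\rho_k$ itself gives
\begin{equation*}
1=Q_\mu^*\int_{\mathbb{H}^{n}}\int_{\mathbb{H}^{n}}\frac{\sigma_k^{Q_\mu^*-1}\rho_k\,\sigma_k^{Q_\mu^*-1}\rho_k}{|\eta^{-1}\xi|^{\mu}}d\xi d\eta+(Q_\mu^*-1)\int_{\mathbb{H}^{n}}\int_{\mathbb{H}^{n}}\frac{\sigma_k^{Q_\mu^*}\sigma_k^{Q_\mu^*-2}\rho_k^2}{|\eta^{-1}\xi|^{\mu}}d\xi d\eta+o_k(1).
\end{equation*}
The aim is to show that the right-hand side is at most $\theta+o_k(1)$ for some fixed $\theta<1$, which is a contradiction.

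\textbf{Step 1: decoupling of the bubbles.} Using the weak interaction $\varepsilon^{(k)}\to0$, Remark \ref{iff}, and the estimates \eqref{main00}, replace $\sigma_k$ by its dominant bubble pointwise. Partition $\mathbb{H}^n$ into regions $\Omega_i^{(k)}$ where $U_i^{(k)}$ dominates the other bubbles, in the spirit of the cut-off regions used in \cite{CFL25}. Then
\begin{equation*}
\sigma_k^{Q_\mu^*-2}\le\sum_i U_i^{Q_\mu^*-2}+\text{error},
\end{equation*}
and the cross terms such as $\int\int U_i^{Q_\mu^*-1}\rho\, U_j^{Q_\mu^*-1}\rho/|\eta^{-1}\xi|^\mu$ with $i\ne j$ tend to $0$. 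For these one uses HLS \eqref{eq:HLSH} together with the fact that $U_i^{Q_\mu^*-1}U_j^{Q_\mu^*-1}$-type products vanish in $L^{Q/(Q+2-\mu)}$-type norms as $\varepsilon_{ij}\to0$. Mixed terms in which the convolution factor comes from $U_j$ and the local factor from $U_i$ are handled via the identity
\begin{equation*}
\int\frac{U_j^{Q_\mu^*}}{|\eta^{-1}\xi|^\mu}d\eta=\alpha(Q,\mu)^{-1}U_j^{Q^*-Q_\mu^*},
\end{equation*}
which follows from \eqref{limit1} and \eqref{limit3}, so that they reduce to $\int U_j^{Q^*-Q_\mu^*}U_i^{Q_\mu^*-2}\rho^2$. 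These are small by H\"older's inequality and vanishing overlap. The condition $\mu<4$, i.e. $Q_\mu^*>2$, is used here so that the exponents are positive and the overlap truly vanishes.

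\textbf{Step 2: blow-up around each bubble.} Set $\tilde\rho_{i,k}=(\mathfrak{g}_i^{(k)})^{-1}\rho_k$. Then $\|\tilde\rho_{i,k}\|_{S^{1,2}(\mathbb{H}^{n})}\le1$ and $\tilde\rho_{i,k}\perp Z^a$ for all $a$. By Step 1,
\begin{equation*}
1\le\sum_i\Big[Q_\mu^*P_2(\tilde\rho_{i,k}\chi_i)+(Q_\mu^*-1)P_1(\tilde\rho_{i,k}\chi_i)\Big]+o_k(1),
\end{equation*}
where $P_1,P_2$ are the quadratic forms appearing in Definition \ref{defi} with respect to $U$, and $\chi_i$ localizes to the pulled-back region. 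The localization is done with smooth cut-offs adapted to $\varepsilon_{ij}$, whose gradient contributions vanish because the regions have very large logarithmic annuli. One then argues, as in \cite{FG20,CFL25}, that $\tilde\rho_{i,k}\chi_i$ is almost orthogonal to $U$ and to the $Z^a$. Orthogonality to $Z^a$ comes from the assumption. Orthogonality to $U$ comes from testing $L_k(\rho_k)$ against $U_i$: since $L(U_i)\approx-(Q_\mu^*-1)\cdot$(nonlinearity)$\,U_i$, the component of $\rho_k$ along $U_i$ is $o(1)$.

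\textbf{Step 3: spectral gap.} By Lemma \ref{eigen}, for $v$ orthogonal to $U$ and the $Z^a$ we have
\begin{equation*}
\|v\|^2_{S^{1,2}(\mathbb{H}^{n})}+P_1(v)\ge\Lambda_3(P_1(v)+P_2(v)),
\end{equation*}
and hence
\begin{equation*}
Q_\mu^*P_2+(Q_\mu^*-1)P_1\le\frac{Q_\mu^*}{\Lambda_3}\|v\|^2_{S^{1,2}(\mathbb{H}^{n})}.
\end{equation*}
This uses $Q_\mu^*-1\le Q_\mu^*-Q_\mu^*/\Lambda_3$, which holds since $\Lambda_3>Q_\mu^*$. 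Summing over $i$ and using $\sum_i\|\tilde\rho_{i,k}\chi_i\|^2_{S^{1,2}(\mathbb{H}^{n})}\le1+o(1)$ gives $1\le Q_\mu^*/\Lambda_3+o(1)<1$, a contradiction.

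\textbf{Main obstacle.} The main difficulty is Step 1/2, namely the nonlocal decoupling. The Riesz potential spreads mass, so cut-offs do not commute with the convolution. We must show that
\begin{equation*}
\int\int\frac{(\chi_iF)(1-\chi_i)G}{|\eta^{-1}\xi|^\mu}d\xi d\eta\to0
\end{equation*}
for the relevant $F,G$. We would attack this by using HLS with the localized $L^{2Q/(2Q-\mu)}$ norms of $U_i^{Q_\mu^*-1}\rho(1-\chi_i)$, which vanish because $U_i$ concentrates in the region where $\chi_i=1$.
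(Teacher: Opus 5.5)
Your proposal is sound in outline, but it takes a genuinely different route from the paper. The paper's proof uses neither Lemma \ref{eigen} nor any cut-offs. With the same contradiction sequence, it shows that for each $i$ the pulled-back functions $(\mathfrak{g}_i^{(k)})^{-1}\rho_k$ converge weakly to $0$. After normalizing $\mathfrak{g}_i^{(k)}=\mathrm{id}$, the other bubbles converge weakly to $0$ and the difference terms vanish against test functions. So any weak limit solves the linearized equation at $U$; it is also orthogonal to every $Z^a$, hence it is zero by Lemma \ref{nondege}. Testing \eqref{main51} with $\rho_k$ and bounding $\sigma_k$ by $\sum_i U_i$ only up to constants then gives $1\lesssim\sum_i(\text{quadratic forms of }(\mathfrak{g}_i^{(k)})^{-1}\rho_k\text{ weighted by }U)+o_k(1)$. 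These weighted forms are compact, so $1\lesssim o_k(1)$.

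Because the contradiction has the form $1\lesssim o(1)$, the paper can be crude with constants. It needs no orthogonality to $U_i$ (which is not in the kernel) and no localization. Your route aims at the sharp bound $1\le Q^*_\mu/\Lambda_3+o(1)$, so every decoupling must hold with exact constants:
\begin{itemize}
\item off-diagonal terms must be $o(1)$, not merely dominated;
\item the localized pieces must satisfy $\sum_i\|\rho_k\chi_i\|^2_{S^{1,2}(\mathbb{H}^n)}\le 1+o(1)$, hence logarithmic cut-offs;
\item you need $\langle\rho_k,U_i\rangle_{S^{1,2}(\mathbb{H}^n)}\to0$.
\end{itemize}
You obtain the last point correctly from the symmetry of $L_k$ and the fact that $L_{U_i}U_i$ is a nonzero multiple of $-\Delta_{\mathbb{H}}U_i$. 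The factor is $-2(Q^*_\mu-1)$ rather than $-(Q^*_\mu-1)$, which is harmless. In exchange you get a Figalli--Glaudo-type coercivity estimate with an explicit gap on the approximate orthogonal complement of $\{U_i,Z_i^a\}$. That is more than the lemma needs, and it costs the whole localization machinery.

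One step needs a different justification. Consider the off-diagonal nonlocal terms $\int_{\mathbb{H}^n}\int_{\mathbb{H}^n}U_i^{Q^*_\mu-1}(\xi)\rho(\xi)\,U_j^{Q^*_\mu-1}(\eta)\rho(\eta)\,|\eta^{-1}\xi|^{-\mu}\,d\xi\, d\eta$ with $i\neq j$. The two weights sit at different variables, so no product of $U_i$ and $U_j$ ever appears. HLS alone only gives the bound $\|U_i^{Q^*_\mu-1}\rho\|_{L^{\frac{2Q}{2Q-\mu}}}\|U_j^{Q^*_\mu-1}\rho\|_{L^{\frac{2Q}{2Q-\mu}}}=O(1)$. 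Your remedy in the last paragraph only removes the tails $U_i^{Q^*_\mu-1}\rho(1-\chi_i)$, not the interaction between the $\chi_i$- and $\chi_j$-pieces.

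A clean fix runs as follows. The function $U_i^{Q^*_\mu-1}\rho_k$ is the image of $U^{Q^*_\mu-1}(\mathfrak{g}_i^{(k)})^{-1}\rho_k$ under a dilation--translation that is an isometry of $L^{\frac{2Q}{2Q-\mu}}$ and leaves the Riesz form invariant. Multiplication by $U^{Q^*_\mu-1}$ is compact from $S^{1,2}(\mathbb{H}^n)$ into $L^{\frac{2Q}{2Q-\mu}}(\mathbb{H}^n)$. So along a subsequence the cross term equals the pairing of two fixed functions, one of them moved by $(\mathfrak{g}_i^{(k)})^{-1}\mathfrak{g}_j^{(k)}\rightharpoonup0$, plus $o_k(1)$, and therefore tends to $0$.

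Alternatively, pieces supported in a gauge ball of radius $r$ and outside a concentric ball of radius $R\ge 2r$ interact at most like $(r/R)^{\mu/2}$ times the product of their $L^{\frac{2Q}{2Q-\mu}}$ norms. With either of these in place, the rest of your argument goes through.
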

\begin{proof}
Assume by contradiction that there exists a family of ${\delta_k}$-weakly interacting bubbles $\big\{\mathfrak{g}_{i }^{(k)} U\big\}_{i=1}^m$ such that
$\delta_k\rightarrow0$, and $\rho_k$ with $\|L(\rho_k) \|_{(S^{1,2}(\mathbb{H}^{n}))^{-1}}=o_k(1)$, $\|\rho_k\|_{_{S^{1,2}(\mathbb{H}^{n})}}=1$ satisfying
\begin{equation}\label{main51}
  -\Delta_{\mathbb{H} } \rho_k-Q^{\ast}_{\mu}\left(\int_{\mathbb{H}^{n}}\frac{|\sigma_k(\eta)|^{Q^{\ast}_{\mu}-1}\rho_k(\eta)}
{|\eta^{-1}\xi|^{\mu}}{d}\eta\right)|\sigma_k|^{Q^{\ast}_{\mu}-2}\sigma_k-(Q^{\ast}_{\mu}-1)
\left(\int_{\mathbb{H}^{n}}\frac{|\sigma_k(\eta)|^{Q^{\ast}_{\mu}}}{|\eta^{-1}\xi|^{\mu}}{d}\eta\right)
|\sigma_k|^{Q^{\ast}_{\mu}-2}\rho_k = \varphi_k,
\end{equation}
and $\langle \rho_k, Z_{i,k}^a \rangle_{S^{1,2}(\mathbb{H}^{n})}= 0$ for any $1\leq i\leq m $ and $ 1\leq a\leq 2n+2$,
where
\begin{equation*}
  \mathfrak{g}_i^{(k)}=\mathfrak{g}_{\lambda_i^{(k)},\xi_i^{(k)}},\quad \sigma_k=\sum\limits_{i=1}^m \mathfrak{g}_i^{(k)}U, \quad Z_{i,k}^a=\mathfrak{g}_i^{(k)} Z^a.
\end{equation*}

We claim that, up to a subsequence,
\begin{equation}\label{main50}
  \Big(\mathfrak{g}_1^{(k)}\Big)^{-1}\rho_k\rightharpoonup0 \qquad\mathrm{in}~S^{1,2}(\mathbb{H}^{n}).
\end{equation}
In fact, one can verify that
\begin{align*}
  -\Delta_{\mathbb{H} } \Big(\mathfrak{g}_1^{(k)}\Big)^{-1}\rho_k&-Q^{\ast}_{\mu}\left(\int_{\mathbb{H}^{n}}
  \frac{\left(U(\eta)+\sum\limits_{j\geq2}\Big(\mathfrak{g}_1^{(k)}\Big)^{-1} \mathfrak{g}_j^{(k)} U(\eta) \right)^{Q^{\ast}_{\mu}-1} \Big(\mathfrak{g}_1^{(k)}\Big)^{-1}\rho_k(\eta)}
{|\eta^{-1}\xi|^{\mu}}{d}\eta\right)\\
  &\quad \times\left(U+\sum\limits_{j\geq2}\Big(\mathfrak{g}_1^{(k)}\Big)^{-1} \mathfrak{g}_j^{(k)} U \right)^{Q^{\ast}_{\mu}-1}
\\
&-(Q^{\ast}_{\mu}-1)\left(\int_{\mathbb{H}^{n}}
  \frac{\left(U(\eta)+\sum\limits_{j\geq2}\Big(\mathfrak{g}_1^{(k)}\Big)^{-1} \mathfrak{g}_j^{(k)} U(\eta) \right)^{Q^{\ast}_{\mu}} 
  }
{|\eta^{-1}\xi|^{\mu}}{d}\eta\right)\\
&\quad \times \left(U(\eta)+\sum\limits_{j\geq2}\Big(\mathfrak{g}_1^{(k)}\Big)^{-1} \mathfrak{g}_j^{(k)} U \right)^{Q^{\ast}_{\mu}-2} \Big(\mathfrak{g}_1^{(k)}\Big)^{-1}\rho_k\\
=&\Big(\lambda_1^{(k)}\Big)^{-2}\Big(\mathfrak{g}_1^{(k)}\Big)^{-1}\varphi_k=o_k(1)\in (S^{1,2}(\mathbb{H}^{n}))^{-1},
\end{align*}
where we have used the fact that
\begin{equation*}
  \|v\|_{(S^{1,2}(\mathbb{H}^{n}))^{-1}}=\|\lambda^{-2}\mathfrak{g}^{-1}v\|_{(S^{1,2}(\mathbb{H}^{n}))^{-1}},\quad \forall \, v\in (S^{1,2}(\mathbb{H}^{n}))^{-1},\ \mathfrak{g}\in \mathcal{G}.
\end{equation*}
Since the bubbles $\big\{\big(\mathfrak{g}_1^{(k)}\big)^{-1} \mathfrak{g}_j^{(k)} U \big\}_{j=1}^{m}$ are also $\delta_k$-weakly interacting,
 without loss of generality, we assume that $\mathfrak{g}_1^{(k)}= \mathrm{id}$.
 Moreover, it follows from $\delta_k\rightarrow0$,  Remark \ref{iff} and Lemma \ref{gg} that $\mathfrak{g}_j^{(k)}\rightharpoonup0$ and $\big(\mathfrak{g}_j^{(k)}\big)^{-1}\rightharpoonup0$ in $S^{1,2}(\mathbb{H}^{n})$, thus in $L^{Q^*}(\mathbb{H}^{n})$ for any $j \geq 2$.
 Hence the equation becomes
 \begin{align*}
  -\Delta_{\mathbb{H} } \rho_k&-Q^{\ast}_{\mu}\left(\int_{\mathbb{H}^{n}}
  \frac{\bigg(U(\eta)+\sum\limits_{j\geq2} \mathfrak{g}_j^{(k)} U(\eta) \bigg)^{Q^{\ast}_{\mu}-1} \rho_k(\eta)}
{|\eta^{-1}\xi|^{\mu}}{d}\eta\right)\bigg(U+\sum\limits_{j\geq2} \mathfrak{g}_j^{(k)} U \bigg)^{Q^{\ast}_{\mu}-1}
\\
&-(Q^{\ast}_{\mu}-1)\left(\int_{\mathbb{H}^{n}}
  \frac{\bigg(U(\eta)+\sum\limits_{j\geq2} \mathfrak{g}_j^{(k)} U(\eta) \bigg)^{Q^{\ast}_{\mu}} 
  }
{|\eta^{-1}\xi|^{\mu}}{d}\eta\right)\bigg(U(\eta)+\sum\limits_{j\geq2} \mathfrak{g}_j^{(k)} U \bigg)^{Q^{\ast}_{\mu}-2} \rho_k
=o_k(1).
\end{align*}
Since $\|\rho_k\|_{_{S^{1,2}(\mathbb{H}^{n})}}=1$, up to a subsequence, we assume that
\begin{equation*}
  \rho_k\rightharpoonup \rho \qquad\mathrm{in}~S^{1,2}(\mathbb{H}^{n}).
\end{equation*}
Denote $\sigma_{*}=\sum\limits_{j\geq2} \mathfrak{g}_j^{(k)} U$.
For any test function $\phi\in C_c^\infty$, by \eqref{eq:HLSH} and Lemma \ref{gs}, we have
\begin{align*}
  &\int_{\mathbb{H}^{n}}\int_{\mathbb{H}^{n}}
  \frac{\bigg(U+\sum\limits_{j\geq2} \mathfrak{g}_j^{(k)} U \bigg)^{Q^{\ast}_{\mu}-1} \rho_k      \bigg(U+\sum\limits_{j\geq2} \mathfrak{g}_j^{(k)} U \bigg)^{Q^{\ast}_{\mu}-1} \phi-U^{Q^{\ast}_{\mu}-1} \rho_k U^{Q^{\ast}_{\mu}-1} \phi}
{|\eta^{-1}\xi|^{\mu}}d\xi{d}\eta\\
=&\int_{\mathbb{H}^{n}}\int_{\mathbb{H}^{n}}
  \frac{\Big[(U+\sigma_* )^{Q^{\ast}_{\mu}-1}-U^{Q^{\ast}_{\mu}-1}\Big] \rho_k      (U+\sigma_*)^{Q^{\ast}_{\mu}-1} \phi
  }
{|\eta^{-1}\xi|^{\mu}}d\xi{d}\eta\\
&+\int_{\mathbb{H}^{n}}\int_{\mathbb{H}^{n}}
  \frac{U^{Q^{\ast}_{\mu}-1} \rho_k      \Big[(U+\sigma_* )^{Q^{\ast}_{\mu}-1}-U^{Q^{\ast}_{\mu}-1}\Big] \phi}
{|\eta^{-1}\xi|^{\mu}}d\xi{d}\eta\\
\lesssim &
 \int_{\mathbb{H}^{n}}\int_{\mathbb{H}^{n}}
  \frac{\Big[U^{Q^{\ast}_{\mu}-2}\sigma_* +\sigma_*^{Q^{\ast}_{\mu}-1}\Big] \rho_k      (U+\sigma_*)^{Q^{\ast}_{\mu}-1} \phi
  }
{|\eta^{-1}\xi|^{\mu}}d\xi{d}\eta \\
&+\int_{\mathbb{H}^{n}}\int_{\mathbb{H}^{n}}
  \frac{U^{Q^{\ast}_{\mu}-1} \rho_k      \Big[U^{Q^{\ast}_{\mu}-2}\sigma_* +\sigma_*^{Q^{\ast}_{\mu}-1}\Big] \phi}
{|\eta^{-1}\xi|^{\mu}}d\xi{d}\eta\\
\lesssim &
\begin{cases}
\displaystyle \sum\limits_{j\geq2}\int_{\mathbb{H}^{n}}\int_{\mathbb{H}^{n}}
  \frac{U^{Q^{\ast}_{\mu}-2}\Big(\mathfrak{g}_j^{(k)} U\Big)  \rho_k      U^{Q^{\ast}_{\mu}-1} \phi
  }
{|\eta^{-1}\xi|^{\mu}}d\xi{d}\eta,\qquad &\mathrm{if}~Q_\mu^*\geq3,\\
\displaystyle \sum\limits_{j\geq2}\int_{\mathbb{H}^{n}}\int_{\mathbb{H}^{n}}
  \frac{U^{Q^{\ast}_{\mu}-2}\Big(\mathfrak{g}_j^{(k)} U\Big) \rho_k      \Big(\mathfrak{g}_j^{(k)} U\Big)^{Q^{\ast}_{\mu}-1} \phi
  }
{|\eta^{-1}\xi|^{\mu}}d\xi{d}\eta,\qquad &\mathrm{if}~2< Q_\mu^*<3,
\end{cases}
\\
&+\sum\limits_{j\geq2}\int_{\mathbb{H}^{n}}\int_{\mathbb{H}^{n}}
  \frac{\Big(\mathfrak{g}_j^{(k)} U\Big)^{Q^{\ast}_{\mu}-1} \rho_k      \Big(\mathfrak{g}_j^{(k)} U\Big)^{Q^{\ast}_{\mu}-1} \phi
  }
{|\eta^{-1}\xi|^{\mu}}d\xi{d}\eta
\\
&+\sum\limits_{j\geq2}\int_{\mathbb{H}^{n}}\int_{\mathbb{H}^{n}}
  \frac{U^{Q^{\ast}_{\mu}-1} \rho_k      \bigg[U^{Q^{\ast}_{\mu}-2}\mathfrak{g}_j^{(k)} U +\Big(\mathfrak{g}_j^{(k)} U\Big)^{Q^{\ast}_{\mu}-1}\bigg] \phi}
{|\eta^{-1}\xi|^{\mu}}d\xi{d}\eta\\
\lesssim &
 \begin{cases}
 \sum\limits_{j\geq2} \left(\underbrace{\int_{\mathbb{H}^{n}}\Big(U^{Q^{\ast}_{\mu}-2} \mathfrak{g}_j^{(k)} U  \Big)^{\frac{2Q}{Q+2-\mu}}d\xi}_{:=I_j ^{(k)}}\right )^{\frac{Q+2-\mu}{2Q}}\qquad &\mathrm{if}~ Q_\mu^*\geq3,\\
 \sum\limits_{j\geq2} \left(\underbrace{\int_{\mathbb{H}^{n}}\Big(\mathfrak{g}_j^{(k)} U\Big)   ^{\frac{2Q(Q^{\ast}_{\mu}-1)}{2Q-\mu}}\phi^{\frac{2Q}{2Q-\mu}}d\xi}_{:=II_j ^{(k)}}\right )^{\frac{2Q-\mu}{2Q}}\qquad &\mathrm{if}~2< Q_\mu^*<3,
 \end{cases}
 \\
 &+\sum\limits_{j\geq2} \left(\underbrace{\int_{\mathbb{H}^{n}}\Big(\mathfrak{g}_j^{(k)} U\Big)   ^{\frac{2Q(Q^{\ast}_{\mu}-1)}{2Q-\mu}}\phi^{\frac{2Q}{2Q-\mu}}d\xi}_{:=III_j ^{(k)}}\right )^{\frac{2Q-\mu}{2Q}}+
 \sum\limits_{j\geq2} \left(\underbrace{\int_{\mathbb{H}^{n}}\Big(U^{Q^{\ast}_{\mu}-2} \phi\mathfrak{g}_j^{(k)} U  \Big)^{\frac{2Q}{2Q-\mu}}d\xi}_{:=IV_j ^{(k)}}\right )^{\frac{2Q-\mu}{2Q}}.
\end{align*}
Since $\big(\mathfrak{g}_j^{(k)} U  \big)^{\frac{2Q}{Q+2-\mu}}\rightharpoonup 0$ in $L^{\frac{Q+2-\mu}{Q-2}}(\mathbb{H}^{n})$ and $U^{\frac{2Q(Q_\mu^*-2)}{Q+2-\mu}}$ is bounded in $L^{\frac{Q+2-\mu}{4-\mu}}(\mathbb{H}^{n})$, by the definition of weak convergence, we obtain
\begin{equation*}
  I_j^{(k)}\rightarrow0,\qquad \mathrm{as} ~ k\rightarrow+\infty.
\end{equation*}
Similarly, we can prove that
\begin{equation*}
  II_j^{(k)},III_j^{(k)},IV_j^{(k)}\rightarrow0,\qquad \mathrm{as} ~ k\rightarrow+\infty.
\end{equation*}
Analogously, by $\mu<4$, we get 
\begin{align}\label{main52}
  &\int_{\mathbb{H}^{n}}\int_{\mathbb{H}^{n}}
  \frac{\bigg(U+\sum\limits_{j\geq2} \mathfrak{g}_j^{(k)} U \bigg)^{Q^{\ast}_{\mu}}       \bigg(U+\sum\limits_{j\geq2} \mathfrak{g}_j^{(k)} U \bigg)^{Q^{\ast}_{\mu}-2}\rho_k \phi-U^{Q^{\ast}_{\mu}}  U^{Q^{\ast}_{\mu}-2}\rho_k \phi}
{|\eta^{-1}\xi|^{\mu}}d\xi{d}\eta \nonumber\\
=&\int_{\mathbb{H}^{n}}\int_{\mathbb{H}^{n}}
  \frac{\Big[(U+\sigma_* )^{Q^{\ast}_{\mu}}-U^{Q^{\ast}_{\mu}}\Big]       (U+\sigma_*)^{Q^{\ast}_{\mu}-2}\rho_k \phi
  }
{|\eta^{-1}\xi|^{\mu}}d\xi{d}\eta +\int_{\mathbb{H}^{n}}\int_{\mathbb{H}^{n}}
  \frac{U^{Q^{\ast}_{\mu}}       \Big[(U+\sigma_* )^{Q^{\ast}_{\mu}-2}-U^{Q^{\ast}_{\mu}-2}\Big]\rho_k \phi}
{|\eta^{-1}\xi|^{\mu}}d\xi{d}\eta \nonumber\\
\lesssim &
 \int_{\mathbb{H}^{n}}\int_{\mathbb{H}^{n}}
  \frac{\Big[U^{Q^{\ast}_{\mu}-1}\sigma_* +\sigma_*^{Q^{\ast}_{\mu}}\Big]     (U+\sigma_*)^{Q^{\ast}_{\mu}-2} \rho_k   \phi
  }
{|\eta^{-1}\xi|^{\mu}}d\xi{d}\eta
\nonumber \\&+\begin{cases}
\displaystyle\int_{\mathbb{H}^{n}}\int_{\mathbb{H}^{n}}
  \frac{U^{Q^{\ast}_{\mu}}      \Big[U^{Q^{\ast}_{\mu}-3}\sigma_*+\sigma_*^{Q^{\ast}_{\mu}-2}\Big]\rho_k \phi}
{|\eta^{-1}\xi|^{\mu}}d\xi{d}\eta, \qquad &\mathrm{if}~Q_\mu^*\geq3,\nonumber \\
\displaystyle\int_{\mathbb{H}^{n}}\int_{\mathbb{H}^{n}}
  \frac{U^{Q^{\ast}_{\mu}}      \sigma_*^{Q^{\ast}_{\mu}-2}\rho_k \phi}
{|\eta^{-1}\xi|^{\mu}}d\xi{d}\eta, \qquad &\mathrm{if}~2< Q_\mu^*<3,
\end{cases}
\nonumber \\
\lesssim &
 \sum\limits_{j\geq2}\int_{\mathbb{H}^{n}}\int_{\mathbb{H}^{n}}
  \frac{U^{Q^{\ast}_{\mu}-1}\Big(\mathfrak{g}_j^{(k)} U\Big)   U^{Q^{\ast}_{\mu}-2} \rho_k \phi
  }
{|\eta^{-1}\xi|^{\mu}}d\xi{d}\eta+\sum\limits_{j\geq2}\int_{\mathbb{H}^{n}}\int_{\mathbb{H}^{n}}
  \frac{\Big(\mathfrak{g}_j^{(k)} U\Big)^{Q^{\ast}_{\mu}}     \Big(\mathfrak{g}_j^{(k)} U\Big)^{Q^{\ast}_{\mu}-2} \rho_k \phi
  }
{|\eta^{-1}\xi|^{\mu}}d\xi{d}\eta \nonumber\\
&+\begin{cases}
\displaystyle\sum\limits_{j\geq2}\int_{\mathbb{H}^{n}}\int_{\mathbb{H}^{n}}
  \frac{U^{Q^{\ast}_{\mu}}    \bigg[U^{Q^{\ast}_{\mu}-3}   \mathfrak{g}_j^{(k)} U+\Big(\mathfrak{g}_j^{(k)} U\Big)^{Q^{\ast}_{\mu}-2}\bigg]\rho_k \phi}
{|\eta^{-1}\xi|^{\mu}}d\xi{d}\eta, \qquad &\mathrm{if}~Q_\mu^*\geq3, \nonumber\\
\displaystyle\sum\limits_{j\geq2}\int_{\mathbb{H}^{n}}\int_{\mathbb{H}^{n}}
  \frac{U^{Q^{\ast}_{\mu}}       \Big(\mathfrak{g}_j^{(k)} U\Big)^{Q^{\ast}_{\mu}-2}\rho_k \phi}
{|\eta^{-1}\xi|^{\mu}}d\xi{d}\eta, \qquad &\mathrm{if}~2< Q_\mu^*<3,
\end{cases}
\\
 \lesssim &
 \sum\limits_{j\geq2} \left(\int_{\mathbb{H}^{n}}\Big(U^{Q^{\ast}_{\mu}-1} \mathfrak{g}_j^{(k)} U  \Big)^{\frac{2Q}{2Q-\mu}} d\xi\right )^{\frac{2Q-\mu}{2Q}}
 +\sum\limits_{j\geq2} \left(\int_{\mathbb{H}^{n}}\Big( \mathfrak{g}_j^{(k)} U  \Big)^{\frac{2Q(Q_\mu^*-2)}{Q+2-\mu}}\phi^{\frac{2Q}{Q+2-\mu}} d\xi\right )^{\frac{Q+2-\mu}{2Q}}
\nonumber \\
 &+\begin{cases}
\displaystyle\sum\limits_{j\geq2} \left(\int_{\mathbb{H}^{n}}\Big(  U^{Q^{\ast}_{\mu}-3}  \phi \mathfrak{g}_j^{(k)} U   \Big)^{\frac{2Q(Q_\mu^*-2)}{Q+2-\mu}} d\xi\right )^{\frac{Q+2-\mu}{2Q}}\\
\displaystyle+\sum\limits_{j\geq2} \left(\int_{\mathbb{H}^{n}}\Big( \mathfrak{g}_j^{(k)} U  \Big)^{\frac{2Q(Q_\mu^*-2)}{Q+2-\mu}}\phi^{\frac{2Q}{Q+2-\mu}} d\xi\right )^{\frac{Q+2-\mu}{2Q}}, \qquad &\mathrm{if}~Q_\mu^*\geq3,\nonumber\\
\displaystyle\sum\limits_{j\geq2} \left(\int_{\mathbb{H}^{n}}\Big( \mathfrak{g}_j^{(k)} U  \Big)^{\frac{2Q(Q_\mu^*-2)}{Q+2-\mu}}\phi^{\frac{2Q}{Q+2-\mu}} d\xi\right )^{\frac{Q+2-\mu}{2Q}}, \qquad &\mathrm{if}~2< Q_\mu^*<3,
\end{cases}
 \\
 &\longrightarrow  0,\qquad \mathrm{as} ~ k\rightarrow+\infty.
\end{align}
Therefore, by letting $k\rightarrow +\infty$, we obtain 
 \begin{align*}
  -\Delta_{\mathbb{H} } \rho&-Q^{\ast}_{\mu}\left(\int_{\mathbb{H}^{n}}\frac{|U(\eta)|^{Q^{\ast}_{\mu}-1}\rho(\eta)}
{|\eta^{-1}\xi|^{\mu}}{d}\eta\right)|U|^{Q^{\ast}_{\mu}-2}U+(Q^{\ast}_{\mu}-1)
\left(\int_{\mathbb{H}^{n}}\frac{|U(\eta)|^{Q^{\ast}_{\mu}}}{|\eta^{-1}\xi|^{\mu}}{d}\eta\right)
|U|^{Q^{\ast}_{\mu}-2}\rho
=0.
\end{align*}
On the other hand, it follows from $\langle \mathfrak{g}v, w \rangle_{S^{1,2}(\mathbb{H}^{n})}=\langle v, \mathfrak{g}^{-1}w \rangle_{S^{1,2}(\mathbb{H}^{n})}$ for any $v,w\in S^{1,2}(\mathbb{H}^{n})$ that
\begin{equation*}
  \langle \rho, Z^a \rangle_{S^{1,2}(\mathbb{H}^{n})}=\lim\limits_{k\rightarrow+\infty}\Big\langle \Big(\mathfrak{g}_1^{(k)}\Big)^{-1}\rho_k, Z^a \Big\rangle_{S^{1,2}(\mathbb{H}^{n})}=\lim\limits_{k\rightarrow+\infty}\langle \rho_k, Z_{1,k}^a \rangle_{S^{1,2}(\mathbb{H}^{n})}= 0.
\end{equation*}
This with Lemma \ref{nondege} yields that $\rho=0$. Hence, we prove the claim \eqref{main50}.

A similar discussion yields that, up to a subsequence, for each $1\leq i  \leq m$, $\big(\mathfrak{g}_i^{(k)}\big)^{-1}\rho_k\rightharpoonup0 $ in $S^{1,2}(\mathbb{H}^{n})$.
Now, testing the equation \eqref{main51} by $\rho_k$,
since
\begin{equation*}
  \int_{\mathbb{H}^{n}}\int_{\mathbb{H}^{n}}\frac{\Big(\mathfrak{g}_i^{(k)} U\Big) ^{\alpha_1}v^{\alpha_2}
\Big(\mathfrak{g}_i^{(k)} U\Big) ^{\beta_1}v^{\beta_2}}{|\eta^{-1}\xi|^{\mu}}{d}\xi d\eta=\int_{\mathbb{H}^{n}}\int_{\mathbb{H}^{n}}\frac{U^{\alpha_1}\Big[\Big(\mathfrak{g}_i^{(k)}\Big)^{-1} v\Big] ^{\alpha_2}
U^{\beta_1}\Big[\Big(\mathfrak{g}_i^{(k)}\Big)^{-1} v\Big] ^{\beta_2}}{|\eta^{-1}\xi|^{\mu}}{d}\xi d\eta
\end{equation*}
for any $v\in S^{1,2}(\mathbb{H}^{n})$, and any $\alpha_1,\alpha_2,\beta_1,\beta_2\geq0$ with $\alpha_1+\alpha_2=Q_\mu^*$, $\beta_1+\beta_2=Q_\mu^*$,
we have
\begin{align}\label{main53}
1=\|\rho_k\|_{_{S^{1,2}(\mathbb{H}^{n})}}^2 \lesssim&   \int_{\mathbb{H}^{n}}\int_{\mathbb{H}^{n}}
\frac{\sigma_k^{Q^{\ast}_{\mu}-1}\rho_k \sigma_k^{Q^{\ast}_{\mu}-1} \rho_k}
{|\eta^{-1}\xi|^{\mu}} d\xi{d}\eta+
\int_{\mathbb{H}^{n}}\int_{\mathbb{H}^{n}}\frac{\sigma_k^{Q^{\ast}_{\mu}}
\sigma_k^{Q^{\ast}_{\mu}-2}\rho_k^2}{|\eta^{-1}\xi|^{\mu}}{d}\xi d\eta+o_k(1) \nonumber\\
\lesssim&  \sum\limits_{i,j=1}^m \int_{\mathbb{H}^{n}}\int_{\mathbb{H}^{n}}
\frac{\Big(\mathfrak{g}_i^{(k)} U\Big) ^{Q^{\ast}_{\mu}-1}\rho_k \Big(\mathfrak{g}_j^{(k)} U\Big) ^{Q^{\ast}_{\mu}-1} \rho_k}
{|\eta^{-1}\xi|^{\mu}} d\xi{d}\eta  \nonumber\\
&+\sum\limits_{i,j=1}^m
\int_{\mathbb{H}^{n}}\int_{\mathbb{H}^{n}}\frac{\Big(\mathfrak{g}_i^{(k)} U\Big) ^{Q^{\ast}_{\mu}}
\Big(\mathfrak{g}_j^{(k)} U\Big) ^{Q^{\ast}_{\mu}-2}\rho_k^2}{|\eta^{-1}\xi|^{\mu}}{d}\xi d\eta+o_k(1)  \nonumber\\
\lesssim&\sum\limits_{i=1}^m \int_{\mathbb{H}^{n}}\int_{\mathbb{H}^{n}}
\frac{U ^{Q^{\ast}_{\mu}-1}\Big[\Big(\mathfrak{g}_i^{(k)}\Big)^{-1}\rho_k\Big] U ^{Q^{\ast}_{\mu}-1}\Big[\Big(\mathfrak{g}_i^{(k)}\Big)^{-1}\rho_k\Big]}
{|\eta^{-1}\xi|^{\mu}} d\xi{d}\eta  \nonumber\\
&+\sum\limits_{i=1}^m
\int_{\mathbb{H}^{n}}\int_{\mathbb{H}^{n}}\frac{U ^{Q^{\ast}_{\mu}}
U^{Q^{\ast}_{\mu}-2}\Big[\Big(\mathfrak{g}_i^{(k)}\Big)^{-1}\rho_k\Big]^2}{|\eta^{-1}\xi|^{\mu}}{d}\xi d\eta+o_k(1)  \nonumber\\
\lesssim& \sum\limits_{i=1}^m \left(\int_{\mathbb{H}^{n}}
U^{\frac{2Q(Q^{\ast}_{\mu}-1)}{2Q-\mu}} \Big[\Big(\mathfrak{g}_i^{(k)}\Big)^{-1}\rho_k\Big]^{\frac{2Q}{2Q-\mu}} d\xi\right )^{\frac{2Q-\mu}{Q}}  \nonumber\\
&+\sum\limits_{i=1}^m \left(\int_{\mathbb{H}^{n}}
U^{\frac{2Q(Q^{\ast}_{\mu}-2)}{2Q-\mu}} \Big[\Big(\mathfrak{g}_i^{(k)}\Big)^{-1}\rho_k\Big]^{\frac{4Q}{2Q-\mu}} d\xi\right )^{\frac{2Q-\mu}{2Q}}+o_k(1)=o_k(1),
\end{align}
which is a contradiction.
Thus we finish the proof of Lemma \ref{main5}.
\end{proof}

We are now ready to prove Theorem \ref{main thm}.

  \begin{proof}
[Proof of Theorem \ref{main thm}]
When $Q=4$ and $\mu\in (2,4)$,  Lemma \ref{main4} yields that
    \begin{equation}\label{maineq1}
     \varepsilon \lesssim  \|\rho\|_{S^{1,2}(\mathbb{H}^{n})}^{3-\frac{\mu}{2}}+\|h\|_{(S^{1,2}(\mathbb{H}^{n}))^{-1}}.
   \end{equation}
   By Lemma \ref{main5}, we obtain
   \begin{equation*}
   \begin{aligned}
   \|\rho\|_{S^{1,2}(\mathbb{H}^{n})}\lesssim \|f+h+N(\rho)\|_{(S^{1,2}(\mathbb{H}^{n}))^{-1}}
   \lesssim \|f\|_{(S^{1,2}(\mathbb{H}^{n}))^{-1}}+\|h\|_{(S^{1,2}(\mathbb{H}^{n}))^{-1}}+
   \|N(\rho)\|_{(S^{1,2}(\mathbb{H}^{n}))^{-1}}.
   \end{aligned}
   \end{equation*}
  It follows from Lemmas \ref{main1}-\ref{main2} and $\mu<4$ that
  \begin{equation}\label{maineq2}
 \|\rho\|_{S^{1,2}(\mathbb{H}^{n})}  \lesssim \|f\|_{(S^{1,2}(\mathbb{H}^{n}))^{-1}}+\|h\|_{(S^{1,2}(\mathbb{H}^{n}))^{-1}} \lesssim  \varepsilon+\|h\|_{(S^{1,2}(\mathbb{H}^{n}))^{-1}}.
  \end{equation}
  Thus, by \eqref{maineq1}-\eqref{maineq2} and $\mu<4$, we deduce that
  \begin{equation*}
    \varepsilon \lesssim \big(\varepsilon+\|h\|_{(S^{1,2}(\mathbb{H}^{n}))^{-1}}\big)^{3-\frac{\mu}{2}}+
    \|h\|_{(S^{1,2}(\mathbb{H}^{n}))^{-1}} \lesssim \|h\|_{(S^{1,2}(\mathbb{H}^{n}))^{-1}}.
  \end{equation*}
Therefore,
  \begin{equation*}
    \|\rho\|_{S^{1,2}(\mathbb{H}^{n})}  \lesssim \|h\|_{(S^{1,2}(\mathbb{H}^{n}))^{-1}},
  \end{equation*}
   and the proof is completed.
\end{proof}

\subsection*{Acknowledgments}
The research has been supported by Natural Science Foundation of Chongqing, China (CSTB2024NSCQ-LZX0038), and Chongqing Graduate Student Research Innovation Project
(CYB25100).

\subsection*{Conflict of interest}

 No potential conflict of interest was reported by the author(s).

\subsection*{Data Availability}

Date sharing is not applicable to this article as no new data were created analyzed in this study.

\end{document}